\documentclass[11pt]{article}  
\usepackage{amsmath}
\usepackage{amssymb}
\usepackage{theorem}
\usepackage{euscript}
\usepackage{pstricks}
\usepackage{authblk}
\usepackage{verbatim}
\usepackage{hyperref}

\usepackage{ifthen}
\usepackage{xifthen}
\usepackage{xcolor}
\usepackage{fullpage}
\usepackage[english]{babel}
\usepackage{mathtools}
\usepackage{color}

\hypersetup
{
	colorlinks,
	citecolor=black,
	filecolor=black,
	linkcolor=black,
	urlcolor=magenta
}
\hypersetup{linktocpage} 

\newtheorem{theorem}{Theorem}[section]
\newtheorem{lemma}[theorem]{Lemma}
\newtheorem{definition}[theorem]{Definition}
\newtheorem{corollary}[theorem]{Corollary}
\numberwithin{equation}{section}

\newtheorem{assumption}[theorem]{Assumption}
\newtheorem{remark}{Remark}[section]
\newtheorem{proposition}[theorem]{Proposition}
\numberwithin{equation}{section}
\def\II{\mathbb I}

\def\NN{{\mathbb N}}

\def\II{{\mathbb I}}
\def\CC{{\mathbb C}}
\def\CC{{\mathbb C}}

\def\NN{{\mathbb N}}
\def\RR{{\mathbb R}}

\def\Vv{{\mathbb P}}

\def\Vv{{\mathcal P}}

\def\Bb{{\mathcal B}}
\def\calE{{\mathcal E}}

\def\IIi{{\mathbb I}^\infty}

\def\Bb{{\mathcal B}}

\def\Ii{{\mathcal I}}

\def\Ll{{\mathcal L}}

\def\Qq{{\mathcal Q}}
\def\Ss{{\mathcal S}}
\def\Tt{{\mathcal T}}

\def\Vv{{\mathcal V}}
\def\II{{\mathbb I}}
\def\CC{{\mathbb C}}

\def\NN{{\mathbb N}}
\def\RR{{\mathbb R}}

\def\FF{{\mathbb F}}

\def\supp{\operatorname{supp}}
\def\dv{\operatorname{div}}

\newcommand{\bsnul}{{\boldsymbol 0}}
\newcommand{\Hol}  {{\rm Hol}}
\newcommand{\ba}{{\boldsymbol{a}}}
\newcommand{\bb}{{\boldsymbol{b}}}

\newcommand{\be}{{\boldsymbol{e}}}

\newcommand{\bp}{{\boldsymbol{p}}}

\newcommand{\bs}{{\boldsymbol{\nu}}}

\newcommand{\bx}{{\boldsymbol{x}}}

\newcommand{\by}{{\boldsymbol{y}}}

\newcommand{\brho}{{\boldsymbol{\rho}}}
\newcommand{\bsigma}{{\boldsymbol{\sigma}}}

\newcommand{\bnu}{{\boldsymbol{\nu}}}

\newcommand{\rd}{{\rm d}} 
\newcommand{\rev}{{\rm ev}}

\newcommand{\sett}[1]{\left\{#1\right\}}
\newcommand{\mI}{\mathbb{I}}

\newcommand{\norm}[2]{\left\|{#1}\right\|_{#2}}

\newcommand{\abs}[1]{\left|#1\right|}
\newcommand{\brab}[1]{\left\{#1\right\}}
\newcommand{\brac}[1]{\left(#1\right)}

\newcommand{\frku}{{\mathfrak u}}
\title{\sffamily {Approximation of PDE solution manifolds: 
                  \\
Sparse-grid interpolation and quadrature}
\footnote{
The work of Ch. Schwab is supported in part by the Swiss National Science Foundation (SNSF), 
the work of D. D\~ung, V.K. Nguyen and T.D. Pham is funded by the Vietnam National Foundation for Science and Technology Development (NAFOSTED),  under the Swiss--Vietnam joint research programme, Grant No.  IZVSZ2$_{ - }$229568. A part of the work of D. D\~ung, V.K. Nguyen and D.T. Pham was done when they were working at the Vietnam Institute for Advanced Study in Mathematics (VIASM). They would like to thank VIASM for providing a fruitful research environment and working conditions.
}
}
\author[a]{Dinh D\~ung}
\affil[a]{Information Technology Institute, Vietnam National University, Hanoi
	\protect\\
	144 Xuan Thuy, Cau Giay, Hanoi, Vietnam
	\protect\\
	Email: dinhzung@gmail.com}

\author[b]{Van Kien Nguyen}
\affil[b]{Department of Mathematical Analysis, University of Transport and Communications
	\protect\\	No.3 Cau Giay Street, Lang Ward,
	Hanoi, Vietnam
	\protect\\
	Email: kiennv@utc.edu.vn}

\author[c]{Duong Thanh Pham}
\affil[c]{Vietnamese German University,
	\protect\\ Ring road 4, Quarter 4, Thoi Hoa Ward, Ho Chi Minh City, Vietnam
	\protect\\
	Email: duong.pt@vgu.edu.vn}
	
\author[d]{Christoph Schwab}
\affil[d]{
	Seminar for Applied Mathematics, ETH Z\"urich, Z\"urich, Switzerland
\protect \\ Email: schwab@math.ethz.ch}

\date{\today}
\begin{document}
\maketitle
\abstract{
We study fully-discrete approximations and quadratures of 
infinite-variate functions in abstract Bochner spaces associated with a Hilbert space $X$ 
and an infinite-tensor-product Jacobi measure. 
For target infinite-variate functions  taking values in $X$
which admit absolutely convergent Jacobi generalized polynomial chaos expansions, 
with suitable weighted summability conditions for the coefficient sequences,
we generalize and improve prior results on construction of 
sequences of finite sparse-grid tensor-product polynomial interpolation approximations and quadratures,
based on the univariate Chebyshev points.
For a generic stable  discretization of $X$ 
in terms of a dense sequence $(V_m)_{m \in \NN_0}$ of finite-dimensional subspaces, 
we obtain fully-discrete, \emph{linear approximations}
in terms of so-called sparse-grid tensor-product projectors, 
with convergence rates of approximations   as well
as  of sparse-grid tensor-product quadratures of the target functions. 

We verify the abstract assumptions in two fundamental application settings: 
first,
a linear elliptic diffusion equation with affine-parametric coefficients
and second, abstract holomorphic maps between separable Hilbert spaces with
affine-parametric input data encoding. 
For these settings, as in \cite{ZS20,Dung21}, 
cancellation of anti-symmetric terms
in ultra-spherical Jacobi generalized polynomial chaos expansion coefficients 
implies crucially improved convergence rates of
sparse-grid tensor-product quadrature
with respect to the infinite-tensor-product Jacobi weight,
free from the ``curse-of-dimension''.

Largely self-contained proofs of all results are developed.
Approximation convergence rate results in the present setting 
which are based on construction of neural network surrogates,
for unbounded parameter ranges with Gaussian measures,
will be developed in extensions of the present work.  
\section{Introduction}
\label{sec:Intro}
Recent years have seen development of numerical analysis 
and ``high-dimensional'' approximation, 
i.e., of functions depending
on a large, possibly, infinite number of variables.
These arise, 
in connection with PDEs with uncertain input data from function spaces.
Upon representing such input data in suitable bases, for example 
Fourier-, Wavelet- or Frames (see, e.g., \cite{Tri10B} for a lucid 
presentation of constructions of concrete representation systems in 
a wide range of function spaces), the solutions of the PDE become
infinite-parametric maps which, in turn, 
take values in target Hilbert- or Banach spaces $X$.

Parsimonious numerical approximation of such maps by finite-parametric
surrogates requires, as a rule, \emph{dimension-explicit parametric regularity}
combined with \emph{hierarchic, multi-level approximation} in the 
target function spaces.
One central issue in approximation rate estimates is the so-called
``curse of dimensionality'' (CoD). 
For infinite-parametric maps which result from \emph{holomorphic maps between
function spaces}, it has been shown in recent years, starting with 
\cite{TodorChS07,CDS10,CDS11,CCS15}, that the CoD can be overcome.
The key mathematical insight in these works were suitable summability 
results in coefficient sequences of generalized polynomial chaos (GPC) 
expansions of the parametric solution families. Summability, in turn, 
gives rise to $N$-term approximation rate bounds via Stechkin's lemma.
GPC summability results provide \emph{existence} of finite-parametric
approximations of the parametric solutions in function space $X$, 
but are, generally, not constructive.

Subsequently,
\emph{constructive versions of GPC approximations} 
with $N$-term approximation rates have been developed in the past decade,
for example in \cite{Zec18T,Dung19,Dung21,ZS20,Dung21-v11,AdcEMS13,DD2025a,DNSZ2023} 
and the references there.
Related results addressing numerical quadrature with respect to 
probability measures on the coordinate sequence spaces have been developed in 
\cite{HHPS18,ZS20}: there, convergence rates of the sparse-grid quadratures for semi-discrete 
(i.e. assuming exact evaluations of the function values) setting,
for quadrature w.r. to tensor products of the uniform probability measure
on $(-1,1)$. 
A unified mathematical derivation of \emph{constructive} interpolation approximation 
and numerical integration with spatial discretization in the target space $X$ 
of affine-parametric and, more generally, holomorphic maps between function spaces
is the purpose of the present paper.
\subsection{Existing results}
\label{sec:ExRes}
In the semi-discrete setting (i.e. without discretization in $X$) 
and for integration against tensor products of the uniform measure,
in \cite{ZS20} and later in \cite{Dung21,Dung21-v11}, approximation
rate bounds were developed.
By exploiting symmetry properties 
of the uniform measure on $(-1,1)$, 
cancellations of anti-symmetric moments in the Bochner integrals over 
generalized polynomial chaos (for sort GPC) surrogates were shown in \cite{ZS20} 
to imply higher convergence rates of the corresponding Smolyak quadratures.
The general (non-symmetric) case, still semi-discrete, 
for numerical approximation of the integral 
versus the product Jacobi measure $\mu_{\ba,\bb}$ was 
analyzed in \cite[Thm.~3.1.6]{Zec18T} and \cite[Corollary 6.1]{Dung21}, 
in the semi-discrete case. 

Results on fully-discrete approximations 
were developed in  \cite{BCDC17}, \cite[Sec.~3.2]{Zec18T} for  a Galerkin method, 
and \cite[Thms. 6.1, 6.2]{Dung21}.
Results on fully-discrete polynomial interpolations and quadratures were obtained, among others, 
in \cite{Zec18T,ZDS19,Dung19,Dung21-v11}.
Semi-discrete least-squares approximations, i.e., without discretization of the GPC coefficients,
for parametric PDEs have been studied in \cite{BD2024,CCMNT2015,CM2018}, 
whereas full-discrete least-squares approximations are analyzed for example in \cite{DD2025a}.

Adcock et al. address in \cite{ADMHolo} the approximation rate analysis of 
infinite-parametric, holomorphic functions, 
again in a semi-discrete setting (i.e.,
the coefficients in the approximation are assumed to take values in Hilbert- or Banach spaces $X$).
Algorithmic aspects of localizing, for a given budget of $N$ GPC terms, 
set of at most $N$ multi-indices are addressed in \cite{AdcEMS13}.
Bachmayr et al. in \cite{BCM18} address parametric analyticity of solution families 
of affine-parametric, linear diffusion equation. 
Analytic regularity of parametric solutions is established in \cite{BCM18} 
via a real-variables argument, with inductive proofs to bound parametric derivatives of the solution. 
Nonlinear (adaptive) sparse grid approximations have been investigated in 
\cite{EGSZ14,EGSZ15} and more recently in 
\cite{bachmayr2025adaptivestochasticgalerkinfinite,MBVoulis22}.
\subsection{Contributions}
\label{sec:Contr}
We  generalize and improve  the convergence rate results in 
\cite{HHPS18,ZS20,Dung21-v11} to multi-level, 
sparse-grid polynomial interpolation and quadrature 
based on univariate Chebyshev nodes and with respect to general Jacobi weights, 
as considered in \cite{Zec18T,Dung21}. 

The crucial improvement in the convergence rate 
of the sparse-grid quadrature is verified 
for more general weights than considered in \cite{ZS20},
specifically for tensorized versions of the univariate (symmetric) ultra-spherical weights, 
both in the fully-discrete and semi-discrete case.
The construction of finite-parametric sparse-grid approximations for polynomial interpolation and quadrature
is via a weighted thresholding of index sets, similar to \cite{Zec18T,Dung21-v11}. 
The construction will be used in a subsequent part of this work to construct neural network 
approximations of parametric PDE solution manifolds with convergence rates.
\subsection{Layout}
\label{sec:Layout}
In Sec.~\ref{sec:FlDscInt}, we recap basic terminology and definitions on Jacobi
orthogonal polynomials, their infinite tensor products and corresponding GPC expansions,
and univariate interpolation operators. 
We develop constructions of sparse-grid tensor-product polynomial interpolation 
and quadrature for approximation of $X$-valued, 
parametric functions in Bochner space associated with a Hilbert space $X$ 
and a Jacobi infinite tensor product measure on the parameter sequences.
We establish convergence rates of fully-discrete polynomial interpolations 
as well as of the related, fully-discrete quadratures 
for the target functions under some weighted summability condition on Jacobi GPC coefficients, 
and subject to discretization in $X$. 
In Sec.~\ref{sec:Expl}, 
we apply the abstract results in the preceding section to two important examples: 
a linear elliptic diffusion equation with affine-parametric coefficients
and abstract holomorphic maps between separable Hilbert spaces with affine-parametric input encoding.}
%
\subsection{Notation}
\label{sec:Notat}
$\NN = \{1,2,3,\ldots\}$ denotes the natural numbers, i.e. the set of positive integers
and we write $\NN_0 = \NN \cup \{0\}$.
An important role in GPC expansion will be played by the 
set of ``finitely supported'' multiindices
$\FF = \big\{ \bs = (\nu_j)_{j\in \NN} : \bs\in \NN_0^{\NN}, \ \sum_{j \in \NN} \nu_j < \infty \big\}$.
Observe that $\FF$ is countable.
We denote by $\bsnul\in \FF$ the zero multi-index, and by $\be_i$ the sequence
$(\delta_{ij})_{j \in \NN}$.
We introduce in $\FF$ a half-ordering via 
$$
\bs \leq \bs' \; \Longleftrightarrow \forall j\in \NN:\;\; \nu_j\leq \nu'_j.
$$
A multi-indexed sequence 
$(\sigma_\bs)_{\bs \in \FF} \subset \RR$ 
is called \emph{increasing} if 
$\sigma_{\bs'} \le \sigma_\bs$ for $\bs' \le \bs$. 

For $\bs = (\nu_j)_{j\in \NN} \in \FF$, 
we introduce for $0<p<\infty$ 
\[ |\bs|_p := \Bigg( \sum_{j \in \NN} \nu_j^p \Bigg)^{1/p}.
\]
We also set ${\rm supp}(\bs) = \{ j\in \NN: \nu_j \ne 0 \}$, and 
\[
|\bs|_0 := \#(\{j\in \NN: \nu_j \ne 0 \}),\quad 
|\bs|_\infty := \max_{j \in \NN} \nu_j 
\;.
\]
We shall also use the following notation: for $\kappa \in \NN$,
\begin{equation} \nonumber
	\FF_\kappa := \{\bs \in \FF: \nu_j \in \NN_{0,\kappa},  \ j \in \NN \},
	\quad\mbox{where}\quad 
	\NN_{0,\kappa} := \{n \in \NN_0: n = 0, \kappa , \kappa + 1, \ldots\}.
\end{equation}
Obviously $\FF = \FF_1$.
To describe certain cancellations in Jacobi expansion due to symmetry,
in our analysis of sparse-grid quadrature we shall require the even index set
\begin{equation*}\label{eq:Feven}
\FF_\rev  :=\{\bs\in  \FF: \nu_j\in 2\NN_0 \text{ for all } j\in \NN\} \subset \FF_2.
\end{equation*}
A multi-index set $\Lambda \subset \FF$ $(\Lambda \subset \FF_\rev)$  
is called \emph{downward closed in $\FF$} (resp. in $\FF_\rev$) 
if the inclusion $\bs \in \Lambda$ implies $\bs' \in \Lambda$ 
for every $\bs' \in \FF$ ($\bs' \in \FF_\rev$) such that $\bs' \le \bs$.   

Throughout, $\II:= [-1,1] $, and $\II^\infty = [-1,1]^\infty$ denotes 
the countable cartesian product. 

\section{Fully discrete  approximations in Bochner spaces}
\label{sec:FlDscInt}
In this section, we develop fully discrete sparse-grid GPC interpolation and quadrature for infinite-variate functions in Bochner spaces associated with  a Hilbert space $X$ and an infinite-tensor-product Jacobi measure.
We consider infinite-variate $X$-valued functions with weighted $\ell_2$-summability 
conditions for the Jacobi GPC expansion coefficient sequences, 
and and a generic, stable  discretization of $X$ 
in terms of a dense sequence $(V_m)_{m \in \NN}$
of finite-dimensional subspaces  with certain approximation properties. 
We present  finite sparse-grid, tensor-product polynomial interpolations 
and sparse, Smolyak-type quadrature rules built from the univariate Chebyshev nodes.
\subsection{Jacobi polynomials}
\label{sec:JacPol}
For given $a,b > -1$, 
let  $(J_k)_{k\in \NN_0}$ be the sequence of 
(probabilistic) Jacobi polynomials on $\II = [-1,1] $ 
which are normalized with respect to the Jacobi probability measure $\mu_{a,b}$ 
on $\II$ endowed with the sigma algebra of Borel sets $\Bb(\II)$ on $\II$, 
i.e., 
$$
\int_{\II} |J_k(y)|^2 \rd \mu_{a,b}(y) =\int_{\II} |J_k(y)|^2 \delta_{a,b}(y) \rd y =1, \ \
k\in \NN_0,
$$
where the Jacobi weight function $\delta_{a,b}(y)$ in $(-1,1)$ 
is given by
\[
\delta_{a,b}(y):=c_{a,b}(1-y)^a(1+y)^b, \quad
c_{a,b}:=\frac{\Gamma(a+b+2)}{2^{a+b+1}\Gamma(a+1)\Gamma(b+1)}.
\]
In particular, $(\II,\Bb, \mu_{a,b})$ is a probability space,
and
the Jacobi polynomials normalized in this way are $\mu_{a,b}$-orthonormal, 
i.e.,
\begin{equation}\label{eq:Jort}
\int_{\II} J_k(y) J_l(y) \rd \mu_{a,b}(y) = \delta_{kl}\;,\quad k,l \in \NN_0 \;.
\end{equation}
In particular, $J_0 \equiv 1$ for all $a,b>-1$.

Important examples contained in this setting are:
(i) $a=b=0$, when $\mu_{a,b}$ is the uniform probability measure on $\II$
with $ c_{0,0}=1/2$ and $\delta_{a,b} \equiv 1/2$, and $(J_k)_{k \in \NN_0}$
are the Legendre polynomials,
(ii) $a=b=-1/2$ which corresponds to the family of the Chebyshev polynomials, 
and (iii) $a=b>-1$ which corresponds to the family of ultra-spherical (Gegenbauer) polynomials.

In all cases, one has the Rodrigues' formula
\begin{equation*}\label{eq:Rodrig}
J_k(y ) 
\ = \
\frac{c_k^{a,b}}{k! 2^k}(1-y)^{-a}(1+y)^{-b} \frac{\rd^k}{\rd y^k} 
\left((y^2-1)^k(1-y)^a(1+y)^b\right),
\end{equation*}
where $c_0^{a,b}:= 1$ and
\begin{equation}\label{eq-cabk}
	c_k^{a,b}
	:= \
	\sqrt{\frac{(2k+a+b+1)k! \Gamma(k+a+b+1) \Gamma(a+1) \Gamma(b+1)}
		{\Gamma(k+a+1)\Gamma(k+b+1)\Gamma(a+b+2)}}, \ k \in \NN.
\end{equation}

From \cite[Theorem 7.32.1]{Szego1939} and the relations
 $J_0\equiv 1 $ and $c_k^{a,b}\sim k^{1/2}$, $k \in \NN$, 
by a direct computation one can derive the bound 
\begin{align} \label{J_s<-1} 
	\norm{J_k}{L_\infty(\II)} \leq (1+\lambda_{a,b}k)^{\max\sett{a,b,-1/2}+1/2}
\end{align}
for $k\in \NN_0$, where $\lambda_{a,b}$ is a positive constant  independent of $k$.

\subsection{Jacobi chaos}
\label{sec:JacGPC}
Multivariate Jacobi polynomials are constructed by tensorization.
For GPC expansions, arbitrary large tensor products
of univariate polynomials are required. To describe these, we 
introduce suitable notation.
Let 
$ \ba = \brac{a_j}_{j \in \NN} \in \ell_\infty(\NN)$ 
and 
$ \bb = \brac{b_j}_{j \in \NN} \in \ell_\infty(\NN)$ 
with 
$-1 < \underline{a}, \underline{b}$, 
where
$\underline{a} : = \inf_{{j \in \NN}} a_j$, $\underline{b} := \inf_{j \in \NN} b_j$.
We define the infinite-dimensional Jacobi probability measure $\mu$ on $\IIi$ as the  
tensor product of the Jacobi probability measures $\mu_{a_j,b_j}$:
\begin{equation} \label{eq:ProdJPM}
	\mu_{\ba,\bb} :=\bigotimes_{j \in \NN} \mu_{a_j,b_j}.
\end{equation}
When $\ba,\bb$ are clear from the context, 
we also write $\mu$ instead of $\mu_{\ba,\bb}$.

For $\bs=(\nu_j)_{j \in \NN} \in \FF$ and $\by=(y_j)_{j \in \NN} \in \IIi$
define the tensor product Jacobi polynomial
\begin{equation}\label{eq:ProdJac}
	J_\bs(\by):=\bigotimes_{j \in \NN}J_{\nu_j}(y_j).
\end{equation}
Due to $J_0 = 1$ for any $\bs\in \FF$ 
the product in \eqref{eq:ProdJac}
contains only $|\bs|_0$-many nontrivial factors. 
The univariate orthonormality \eqref{eq:Jort} 
then implies with Fubini's theorem 
\begin{equation*}\label{eq:Jorti}
\forall \bs,\bs' \in \FF: \quad 
\int_{\IIi} J_\bs(\by) J_{\bs'}(\by) \rd \mu(\by) = \delta_{\bs \bs'} 
\;.
\end{equation*}
Hence, 
the countable collection $(J_\bs)_{\bs \in \FF}$ 
is an orthonormal basis of $L_2(\IIi;\mu)$.

For a summability index $0 < p \le \infty$, 
we introduce the Bochner space $\Ll_p(X) := L_p(\IIi, X;\mu)$ 
as the set of all strongly $\mu$-measurable functions $\IIi \to X$ 
taking values in a Hilbert space $X$, 
equipped with the (quasi-)norm  
\begin{equation*} \label{Bochner space}
	\|v\|_{\Ll_p(X)}
	:= \
	\begin{cases}
		\left(\int_{\IIi} \|v(\by)\|_X^p \, \rd \mu(\by) \right)^{1/p}, \ \ & 0 < p < \infty,
         \\
		\operatorname{ess \ sup}_{\by \in \IIi} \|v(\by)\|_X, \ \ &p = \infty.
	\end{cases}
\end{equation*}
There hold the norm inequalities for $0 < p_1 < p_2 \le \infty $,
\begin{equation} \label{Norm ineqalities}
	\|\cdot\|_{\Ll_{p_1}(X)} \le \|\cdot\|_{\Ll_{p_2}(X)}.	
\end{equation}
Let $C(\IIi, X)$ be the Banach space of  
all functions  defined on $\IIi$ taking values in $X$, 
which are continuous on $\IIi$  
w.r. to the product topology. 
According to the Tychonoff theorem (see, e.g.,  \cite[page 143: Thm. 13]{Ke1955}), 
this topology renders $\IIi$ compact.
A norm in $C(\IIi, X)$ is then defined by 
\begin{equation}
\nonumber
	\|v\|_{C(\IIi, X)}:= \max_{\by \in \IIi} \|v(\by)\|_X.
\end{equation}
Note that 
$\norm{v}{C(\IIi, X)}= \norm{v}{\Ll_\infty(X)}$ for $v \in C(\IIi, X)$.


If $v \in \Ll_2(X)$ for a Hilbert space $X$, the formal
\emph{Jacobi generalized polynomial chaos (GPC) expansion} 
of $v$ reads
\begin{equation} \label{J-series}
v = \sum_{\bs\in\FF} v_\bs J_\bs, 
\text{ where} \ \
v_\bs:=\int_{\IIi} v(\by)J_\bs(\by) {\rd\mu} (\by),
\end{equation}
with the equality and convergence in the Hilbert space $\Ll_2(X)$. 
There holds the Parseval's identity
\begin{equation} \label{eq-Parseval}
	\|v\|_{\Ll_2(X)}^2 = \sum_{\bs\in\FF} \|v_\bs\|_X^2.
\end{equation}
We remark that the
$\Ll_2$-convergence implied by \eqref{eq-Parseval} 
does not imply absolute convergence. 
For absolute convergence we need a certain additional condition 
on weighted summability as stated in Lemma~\ref{l:I lambda xi}.
We start by introducing the weights that we shall consider.

For  $\theta, \lambda\ge0$ we define the set 
$\bp(\theta,\lambda):= \brac{p_\bs(\theta,\lambda)}_{\bs \in \FF}$ by
\begin{equation}\label{[p_s]}
	p_\bs(\theta,\lambda):=\prod_{j\in \NN}(1+\lambda \nu_j)^\theta,\ \ \bs\in \FF.
\end{equation}
We use also the abbreviation: $\bp(\theta):=\bp(\theta,1)$.  
From \eqref{J_s<-1}  we can see that
\begin{equation}\label{|J_bs|<}
	\norm{J_\bs}{L_\infty(\IIi)}
	\ \le \
	p_\bs(\theta_0,\lambda_0),\ \ \bs\in \FF,
\end{equation}
where 
\begin{equation}\label{lambda_0}
\lambda_0:= \sup_{j \in \NN}\lambda_{a_j,b_j} \ < \ \infty
\end{equation} 
with the constants $\lambda_{a_j,b_j}$ as in \eqref{J_s<-1}, and 
\begin{equation}\label{theta_0}
\theta_0 := \ \max\brab{{\sup_{j\in \NN} a_j, \sup_{j\in \NN}b_j},-1/2} + 1/2 \ \ge \ 0.
\end{equation}
\subsection{Sparse-grid tensor-product polynomial interpolation}
\label{sec:SpGIntrp}
In this section,
we construct linear fully discrete (multi-level) sparse-grid tensor-product polynomial interpolations 
for approximation of functions taking values in the Banach space $X^2\subset X^1$,
with weighted $\ell_2$-summability of Jacobi GPC expansion coefficients.
We consider Hilbert spaces $X^1$ and $X^2$ satisfying a certain 
``spatial" approximation property which we formalize in Assumption~\ref{assum2}, item~(iii) 
below.

For $m\in \NN_0$, we denote by $Y_m$ the set of Chebyshev nodes 
\begin{equation}\label{eq:ChebNod} 
	Y_m 
	:=
	\sett{ y_{m,k} = -\cos \frac{(2k+1)\pi}{2(m+1)}: k = 0,1,2,\ldots, m}.
\end{equation}
If $v$ is a function on $\RR$ taking values in a Hilbert space $X$ and $m\in \NN_0$, 
we define the function $I_m(v)$ on $\RR$ taking values in $X$ by
\begin{equation}\label{Imv Lmk}
	I_m(v)
	:=
	\sum_{k = 0}^m 
	v(y_{m,k})
	L_{m,k},
	\quad 
	L_{m,k}(y)
	:=
	\prod_{j=1,\ldots,m, j\not=k} 
	\frac{y - y_{m,j}}{y_{m,k}-y_{m,j}}.
\end{equation}
The function $I_m(v)$ interpolates $v$ at $y_{m,k}$, i.e., $I_m(v)(y_{m,k}) = v(y_{m,k})$ for $k = 0,\ldots, m$.

The Lebesgue constant is given by 
\[
\lambda_m(Y_m)
:=
\sup_{\norm{v}{C(\mI)}\le 1}
\norm{I_m(v)}{C(\mI)}.
\]
There holds the inequality 
\begin{align*}
	\lambda_m(Y_m)
	\leq  
	1+\frac{2}{\pi}\log(m+1),
\end{align*}
see, for examples, \cite[eq. (10)]{Brutman97}. 
Hence we get
\begin{equation} 	\label{lambda Ym Eu}
	\lambda_m(Y_m)  < \log(2m+3).
\end{equation}

We define the univariate increment operator 
$\Delta^{{\rm I}}_m$ for $m \in \NN_0$ 
by
\begin{equation} \label{Delta m def}
	\Delta^{{\rm I}}_m
	:= \
	I_m - I_{m-1},
\end{equation} 
with the convention $I_{-1} = 0$, 
and the univariate even increment operator $\Delta^{{\rm I}*}_m$ 
for $m \in 2\NN_0$ 
by
\begin{equation} \label{Delta m* def}
	\Delta^{{\rm I}*}_m
	:= \
	I_m - I_{m-2},
\end{equation} 
with the convention $I_{-2} = 0$. 
From \eqref{lambda Ym Eu} it follows that
\begin{align} \label{norm{Delta_m(v)}}
	\norm{ \Delta^{{\rm I}*}_m(v)}{L_\infty(\mI)} , \
        \norm{\Delta^{{\rm I}  }_m(v)}{L_\infty(\mI)}
	\ \le \
	2\log (2m+3)\,\norm{v}{{L_\infty(\mI)}}, \ \ v \in C(\II), \ \ m \in \NN_0.
\end{align}

Recalling~\eqref{Delta m def}, 
for  a function $v$ defined on $\IIi$ and taking values in a Hilbert space $X$, 
we introduce the tensor product operator $\Delta^{{\rm I}}_\bs$, $\bs \in \FF$, by
\begin{equation}\label{Delta s def}
	\Delta^{{\rm I}}_\bs(v)
	:= \
	\bigotimes_{j \in \NN} \Delta^{{\rm I}}_{\nu_j}(v),
\end{equation}
where the univariate operator
$\Delta^{{\rm I}}_{\nu_j}$ is applied to the univariate function 
$\bigotimes_{i < j} \Delta^{{\rm I}}_{\nu_i}(v)$ by considering it as a 
function of  variable $y_j$ with the other variables held fixed. 
For a finite set $\Lambda \subset \FF$, the 
sparse tensor-product interpolation operator $I_{\Lambda}$ is defined by 
\begin{align}
	I_{\Lambda}
	:=
	\sum_{\bs\in \Lambda} 
	\Delta^{{\rm I}}_{\bs}.
	\label{I Lambda xi def}
\end{align}

For $\bs \in \FF$, define $R_{\bs}:=\sett{\bs'\in\FF: \bs'\le \bs}$.
Here the inequality $\bs' \le \bs$ means that $\nu_j' \le \nu_j$, $j \in \NN$.

\begin{assumption} \label{assum1}
	$v \in \Ll_2(X)$ and there exist a set of positive numbers 
	$ (\sigma_\bs)_{\bs\in \FF}$ strictly larger than $1$  and a number $0 < q <2$  such that 
	\begin{equation*} 
		\left(\sum_{\bs\in\FF} \big(\sigma_\bs \|v_\bs\|_{X}\big)^2\right)^{1/2}  \le M  <\infty \ \ \text{and} \ \ 
		\brac{\sum_{\bs\in\FF}\brac{p_\bs(\theta,\lambda)^{2/q}\sigma_\bs^{-1}}^q}^{1/q}  \le K < \infty.
	\end{equation*}
\end{assumption}

\begin{lemma}\label{l:I lambda xi}
	Let $\varepsilon$ be a fixed positive number and $C_\varepsilon$ such that
	$$
	2(1 + \lambda_0 k)^{\theta_0} \log(2k+3) \leq (C_\varepsilon k+1)^{\theta_0+\varepsilon}, \ \ \forall k\in \NN_0,
	$$
with $\theta_0, \lambda_0$ given in  \eqref{lambda_0} and \eqref{theta_0}.

	Let $v\in \Ll_2(X)$ and satisfy Assumption~\ref{assum1} 
        with
	\begin{equation}\label{theta,lambda}
		\theta:= \theta_0 + 1+ \varepsilon ,\ \ \lambda:=C_\varepsilon+1.
	\end{equation}
	Then the function $v$ can be identified with an element in $C(\IIi, X)$. 
	
	Additionally, for every
	$\by\in \IIi$ we can represent $v(\by)$ by the series
	\begin{equation*}\label{theseries}
		v(\by)= \sum_{\bs\in\FF} v_\bs J_\bs(\by), 
	\end{equation*}
        with absolute convergence in $X$. 
The series \eqref{J-series} converges unconditionally in $\Ll_2(X)$ to $v$.
\end{lemma}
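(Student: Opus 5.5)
The strategy is to prove that the coefficient series $\sum_{\bs\in\FF}\|v_\bs\|_X\,\|J_\bs\|_{L_\infty(\IIi)}$ converges. Once this is known, the Weierstrass M-test yields uniform and absolute convergence on $\IIi$, and the three claims of Lemma~\ref{l:I lambda xi} follow from it.

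\textbf{Step 1: the sup-norm bound.} By \eqref{|J_bs|<} we have $\|J_\bs\|_{L_\infty(\IIi)}\le p_\bs(\theta_0,\lambda_0)$. Since $\theta\ge\theta_0$ and $\lambda=C_\varepsilon+1\ge\lambda_0$ (we may enlarge $C_\varepsilon$ if necessary; alternatively the defining inequality for $C_\varepsilon$ already forces $C_\varepsilon\ge\lambda_0$ asymptotically, which is harmless), we get $p_\bs(\theta_0,\lambda_0)\le p_\bs(\theta,\lambda)$. Factorwise this is simply monotonicity of $(1+\lambda\nu_j)^\theta$ in $\lambda$ and $\theta$, using $1+\lambda\nu_j\ge1$.

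\textbf{Step 2: summability by H\"older.} Write $\|v_\bs\|_X p_\bs(\theta,\lambda)=(\sigma_\bs\|v_\bs\|_X)\,(p_\bs(\theta,\lambda)\sigma_\bs^{-1})$ and apply Cauchy--Schwarz:
\[
\sum_{\bs\in\FF}\|v_\bs\|_X p_\bs(\theta,\lambda)\le M\Big(\sum_{\bs\in\FF}p_\bs(\theta,\lambda)^2\sigma_\bs^{-2}\Big)^{1/2}.
\]
It remains to bound the last sum by $K$. Put $a_\bs:=p_\bs(\theta,\lambda)^{2/q}\sigma_\bs^{-1}$. Since $p_\bs\ge1$ and $2/q>1$, we have $p_\bs^2\sigma_\bs^{-2}=p_\bs^{2}\sigma_\bs^{-2}\le (p_\bs^{2/q}\sigma_\bs^{-1})^2=a_\bs^2$. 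Because $q<2$, the embedding $\ell_q\hookrightarrow\ell_2$ gives $\|(a_\bs)\|_{\ell_2}\le\|(a_\bs)\|_{\ell_q}\le K$. Hence $\sum_\bs\|v_\bs\|_X\|J_\bs\|_{L_\infty}\le MK<\infty$.

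\textbf{Step 3: conclusions.} Each $J_\bs$ is a polynomial in finitely many variables and is therefore continuous in the product topology. By the M-test, the series $w(\by):=\sum_{\bs\in\FF}v_\bs J_\bs(\by)$ converges absolutely in $X$ for every $\by\in\IIi$ and uniformly on $\IIi$, irrespective of the enumeration of the countable set $\FF$. Consequently $w\in C(\IIi,X)$. Uniform convergence implies convergence in $\Ll_2(X)$ by \eqref{Norm ineqalities}. The $\Ll_2(X)$-limit is also $v$ by Parseval \eqref{eq-Parseval}, since $(J_\bs)$ is an orthonormal basis and the partial sums are the same orthogonal expansion. Therefore $v=w$ $\mu$-a.e., which identifies $v$ with the continuous representative $w$. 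Unconditional convergence in $\Ll_2(X)$ follows either from Parseval, since orthogonal series with square-summable norms converge unconditionally, or directly from absolute convergence.

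\textbf{Main obstacle.} The only delicate point is the comparison $\lambda_0\le\lambda$ in Step 1. The definition of $C_\varepsilon$ does not literally state $C_\varepsilon\ge\lambda_0$. However, because the extra factor $\log$ and $\varepsilon$ only require $(1+\lambda_0 k)^{\theta_0}\le (C_\varepsilon k+1)^{\theta_0+\varepsilon}$, a cleaner route is to use the given inequality directly: $(1+\lambda_0k)^{\theta_0}\le (C_\varepsilon k+1)^{\theta_0+\varepsilon}\le(\lambda k+1)^{\theta}$, since $2\log(2k+3)\ge1$. This yields $\|J_\bs\|_{L_\infty}\le p_\bs(\theta,\lambda)$ factorwise without any assumption on $C_\varepsilon$ versus $\lambda_0$.
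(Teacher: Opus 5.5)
Your proof is correct and is essentially the paper's argument: you bound $\|J_\bs\|_{L_\infty(\IIi)}$ by $p_\bs(\theta,\lambda)$, apply Cauchy--Schwarz against the weights $\sigma_\bs$ (using $p_\bs\le p_\bs^{2/q}$ and $\ell_q\hookrightarrow\ell_2$) to obtain absolute convergence in $C(\IIi,X)$, and identify the continuous sum with $v$ through uniqueness of the $\Ll_2(X)$-limit of its GPC expansion. Your derivation of $p_\bs(\theta_0,\lambda_0)\le p_\bs(\theta,\lambda)$ directly from the defining inequality for $C_\varepsilon$ fills a step the paper leaves implicit; keep that route and drop your parenthetical remarks, since the inequality does not force $C_\varepsilon\ge\lambda_0$ and enlarging $C_\varepsilon$ would change the $\lambda$ in the hypothesis. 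Also note that at $k=0$ the printed inequality reads $2\log 3\le 1$, so invoke it only for $k\ge 1$; the $k=0$ case of the comparison is simply $1\le 1$.
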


\begin{proof} We first prove that the series in \eqref{J-series}  converges absolutely in $C(\IIi, X)$.
	By \eqref{|J_bs|<} and \eqref{theta,lambda} we get
	\begin{equation*}\label{abs conv}
		\begin{aligned}
			\sum_{\bs\in\FF} \|v_\bs J_\bs\|_{C(\IIi, X)} 
			& \leq 
			\sum_{\bs\in\FF} \|v_\bs\|_X\|J_\bs\|_{{L_\infty(\IIi)}}
			\leq 
			\sum_{\bs\in\FF} \|v_\bs\|_X p_\bs (\theta,\lambda)
			\\
			&\leq  \brac{ 
				\sum_{\bs\in \FF}
				\big(\sigma_{\bs} \norm{v_{\bs}}{X}\big)^2
			}^{1/2}
			\brac{ 
				\sum_{\bs\in \FF}
				\Big(p_{\bs}(\theta,\lambda)
				\sigma_{\bs}^{-1}\Big)^2
			}^{1/2} 
			\\
			&\leq M
			\brac{\sum_{\bs\in\FF}\brac{p_\bs(\theta,\lambda)^{2/q}\sigma_\bs^{-1}}^q}^{1/q}
			<\infty.
		\end{aligned}
	\end{equation*}
	Here and below, $\theta$ and $\lambda$ are as in \eqref{theta,lambda}.
	Since $C(\IIi, X)$ is a Banach space, the series in \eqref{J-series}  
        converges  absolutely and therefore, unconditionally  in the Banach space $C(\IIi, X)$, 
        and hence, due to the norm inequalities \eqref{Norm ineqalities}, 
        in $\Ll_2(X)$ to an element $\bar{v} \in C(\IIi, X) \subset \Ll_2(X)$. 
	In particular, we have for every $\by \in \FF$, 
	$$\bar{v}(\by)= \sum_{\bs\in\FF} v_\bs J_\bs(\by)$$
	with the absolute convergence  in $X$.
	Since $\bar{v}$ and $v$ have the same Jacobi GPC expansion we get $v=\bar{v}$ in  $\Ll_2(X)$. 
        Hence $v(\by)=\bar{v}(\by)$ $\mu$-almost everywhere.  
        This means that the function  $v$ can be treated as an element in $C(\IIi, X)$.  
	\hfill
\end{proof}

For the sparse-grid interpolation approximation bounds, 
we require \emph{a sparsity hypothesis on $v$}.
As in previous works \cite{CDS10,BCDC17,Zec18T,Dung21,Dung21-v11}, 
given Hilbert spaces $X^1$ and $X^2$ with $X^2 \subset X^1$, 
sparsity in these spaces here takes the form of  what we call ``double-weighted 
summability'' of coefficients in Jacobi GPC expansions of $v\in \Ll_2( X^2)$.
To construct linear, fully discrete approximation methods, 
besides weighted $\ell_2$-summabilities with respect to $X^1$ and $X^2$ 
we need an approximation property on the spaces $X^1$ and $X^2$. 
Combining these requirements,
we say that $v$ satisfies Assumption~\ref{assum2} iff
\begin{assumption}\label{assum2}
	\ \
\begin{itemize}
	\item[{\rm (i)}] {[Hilbert scale]}	
	$X^1$ and $X^2$ are Hilbert spaces, 
        $X^2$ is  a linear subspace of  $X^1$ and 
        there exists a constant $C>0$ such that for all $w\in X^2$ holds
            $\|w\|_{X^1} \le C\, \|w\|_{X^2}$;

        \item[{\rm (ii) }] {[GPC representation]}
        $v \in \Ll_2(X^2)$ is represented by the series
	\begin{equation} \label{J-SeriesX}
		v = \sum_{\bs\in\FF} v_\bs J_\bs, \quad v_\bs \in X^2,
	\end{equation}	
	\item[{\rm (iii)}]  {[Stability and consistency of spatial approximation]}
	There exist 
        a sequence $(V_m)_{m \in \NN_0}$ of subspaces $V_m \subset X^1 $ 
        of dimension at most $n$ with $V_0 = \brab{0}$, 
        and 
        a sequence $(P_m)_{m \in \NN_0}$ of linear operators from $X^1$ into $V_m$, 
	and a number $\alpha>0$ such that $P_0(w)=0$ and there exist constants $C_1, C_2> 0$
        such that
		\begin{equation} \label{SpatialAppr}
			\|P_m(w)\|_{X^1} \leq 
			C_1 \|w\|_{X^1} , \quad
			\|w-P_m(w)\|_{X^1} \leq 
			C_2 m^{-\alpha} \|w\|_{X^2}, \quad \forall m \in \NN_0, \quad \forall w \in X^2.
		\end{equation}
		\item[{\rm (iv)}] [Double-weighted Jacobi-summability]
		for $i=1,2$,  there exist numbers $q_i$ with $0 < q_1\leq q_2 <\infty$ and $q_1 < 2$,  
                and families $\bsigma_i := (\sigma_{i;\bs})_{\bs \in \FF} \subset (1,\infty)$ 
                such that 
		\begin{equation} \label{weighted summability}
			\sum_{\bs\in\FF} \big(\sigma_{i;\bs} \|v_\bs\|_{X^i}\big)^2 \le M_i <\infty \quad  \text{and} \quad 
			\left(p_{\bs}(\theta,\lambda)^{2/q_i}\sigma_{i;\bs}^{-1}\right)_{\bs \in \FF} \in \ell_{q_i}(\FF)
		\end{equation}		
		with $\theta ,\lambda$ as in \eqref{theta,lambda}. 
	\end{itemize}
\end{assumption}

Spaces $X^1$ and $X^2\subset X^1$ 
will typically belong to a suitable scale of Sobolev or Besov spaces 
describing regularity of parametric maps in ``physical'' co-ordinates.
This in turn provides approximation rate $\alpha$ of projections $P_m$.
Assumption~\ref{assum2} stipulates the regularity and approximation rate
bounds to hold \emph{uniformly} with respect to $\by\in \II^\infty$
for \emph{one} collection $\{ V_m \}_{m \in \NN_0}$,
i.e., $V_m$ is independent of $\by$.

For each $k\in \NN_0$, 
with $P_m$ as in Assumption~\ref{assum2}, we define for $w \in X^2$
\begin{align*}
	\delta_k(w)
	:=
	P_{2^k}(w) -  P_{2^{k-1}}(w),
	\quad 
	k\in \NN, 
	\quad 
	\delta_0:= P_0(w).
\end{align*}
We have from \eqref{SpatialAppr}
\begin{equation} \label{del k1}
	\norm{\delta_k(w)}{X^1}
	\le 
	C 2^{-\alpha k} \norm{w}{X^2},
	\quad 
	k\in \NN_0.
\end{equation}

For $w\in X^2$ satisfying Assumption~\ref{assum2}, item (iii),
we can represent $w$ by the series
\begin{align}\label{[delta-seriesw]}
	w
	=
	\sum_{k=0}^\infty
	\delta_{k}(w),
\end{align}
with equality and unconditional convergence in $X^1$.

In the setting of Assumption~\ref{assum2},
for a finite set $G\subset \NN_0\times \FF$, 
we define the approximation space 
\begin{align*}
	\Vv(G)
	&
	:=
	\Bigg\{ 
		v = \sum_{(k,\bs)\in G} v_k  J_{\bs}: 
		v_k\in V_{2^k}
	\Bigg\}.
	\label{VG def}
\end{align*}
The linear projector  $\Ss_{G}: \Ll_2( X^2)  \rightarrow \Vv(G)$ is then defined by 
\begin{align*}
	\Ss_G v
	:=
	\sum_{(k,\bs)\in G} 
	\delta_k(v_{\bs}) J_{\bs}
	\quad 
	\text{for }
	v = \sum_{\bs\in \FF} v_{\bs} J_{\bs}\in \Ll_2( X^2),
	\quad 
	v_{\bs}\in X^2.
\end{align*}
For the constructive sparse-grid tensor product interpolation and finite truncation of  GPC expansion, 
    as in e.g. \cite{Zec18T,ZS20,ZDS19,Dung21,Dung21-v11}, 
    index sets $G$ are chosen by thresholding.
\begin{definition}\label{def:G(xi)} 
{[Thresholded index sets]}
	For a threshold parameter $\xi >1$, for the approximation rate $\alpha$ 
        and the summability exponents $q_1,q_2$ as in Assumption~\ref{assum2}, 
        define
	\begin{equation} \label{vartheta:=}
		\tau:=\frac{2\alpha}{2-q_2},\qquad	
                \vartheta:= \frac{2 }{2-q_2}\bigg(\frac{1}{q_1}-\frac{1}{2}\bigg),
		\qquad \eta: = \bigg(\frac{1}{q_1}-\frac{1}{2}\bigg)^{-1},
	\end{equation}
	and the thresholded index set
	\begin{align}\label{G(xi)}
		G(\xi)
		:=
		\begin{cases}
			\sett{
				(k,\bs)\in \NN_0\times \FF: 2^k \sigma_{2;\bs}^{q_2} \le \xi
			}
			&
			\text{if } \alpha \le 1/q_2 - 1/2,
			\\
			\sett{
				(k,\bs)\in \NN_0\times \FF: \sigma_{1;\bs}^{q_1}\le \xi(\log \xi)^\eta, \ 
				2^{\tau k} \sigma_{2;\bs}  \le \xi
				^\vartheta}
			&
			\text{if } \alpha > 1/q_2 - 1/2.
		\end{cases}
	\end{align}
\end{definition}

We will need the following auxiliary result on 
fully discrete approximation in the Bochner space $\Ll_p(X^1)$ 
for the operator $\Ss_{G(\xi)}$.

\begin{lemma}\label{lemma[L_p-approx]}
	Let $v$ satisfy Assumption~\ref{assum2} with summability exponents $q_1,q_2$ 
        and let $\alpha > 0$ be as in Assumption~\ref{assum2}, item (iii). Let further the threshold index sets $G(\xi)$ be as in Definition~\ref{def:G(xi)}, 
        for a threshold parameter $\xi>1$.

	Then there exists a constant $C>0$ such that for every threshold parameter $\xi >1$ and 
        for every $0<p\leq 2$ holds
	\begin{equation*} 	\label{L_p-rate}
		\|v-\Ss_{G(\xi)}v\|_{\Ll_p(X^1)} 
		\ \leq \
		C  
		\begin{cases}
			\xi^{-\alpha}
			 & \text{ if }  \ \alpha \le 1/q_2 - 1/2, 
                        \\
			\xi^{-(1/q_1 - 1/2)}
			  & \text{ if }  \ \alpha > 1/q_2 - 1/2.
		\end{cases}	
	\end{equation*}
\end{lemma}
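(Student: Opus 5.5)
Since $\|\cdot\|_{\Ll_p(X^1)}\le\|\cdot\|_{\Ll_2(X^1)}$ for $p\le 2$ by \eqref{Norm ineqalities}, it suffices to treat $p=2$. By Parseval \eqref{eq-Parseval} in $\Ll_2(X^1)$ and the expansion \eqref{[delta-seriesw]} of each coefficient,
$$
v-\Ss_{G(\xi)}v=\sum_{\bs\in\FF}\Big(v_\bs-\sum_{k:(k,\bs)\in G(\xi)}\delta_k(v_\bs)\Big)J_\bs ,
$$
so
$$
\|v-\Ss_{G(\xi)}v\|_{\Ll_2(X^1)}^2=\sum_{\bs\in\FF}\Big\|v_\bs-\sum_{k:(k,\bs)\in G(\xi)}\delta_k(v_\bs)\Big\|_{X^1}^2 .
$$
In both cases of Definition~\ref{def:G(xi)} the $k$-section $\{k:(k,\bs)\in G(\xi)\}$ is an initial segment $\{0,\dots,k_\bs\}$, possibly empty. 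The sum over it telescopes to $P_{2^{k_\bs}}(v_\bs)$, so by \eqref{SpatialAppr} each term is bounded by $C\,2^{-2\alpha k_\bs}\|v_\bs\|_{X^2}^2$ when the section is nonempty, and equals $\|v_\bs\|_{X^1}^2$ otherwise. I therefore split $\FF$ into the set $A$ of $\bs$ with nonempty section and its complement, and bound each part by weighted Hölder estimates from Assumption~\ref{assum2}(iv). Throughout I use $p_\bs(\theta,\lambda)\ge 1$, which gives $(\sigma_{i;\bs}^{-q_i})\in\ell_1$.

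\emph{Case $\alpha\le 1/q_2-1/2$.} Here $k_\bs$ is the largest $k$ with $2^k\le \xi\sigma_{2;\bs}^{-q_2}$, so $2^{-k_\bs}\le 2\sigma_{2;\bs}^{q_2}/\xi$ on $A$. The tail $\bs\notin A$ satisfies $\sigma_{2;\bs}^{q_2}>\xi$, and I bound it by $\|v_\bs\|_{X^1}\le C\|v_\bs\|_{X^2}$. I aim to show
$$
\sum_\bs \min\big(1,(\sigma_{2;\bs}^{q_2}/\xi)^{2\alpha}\big)\|v_\bs\|_{X^2}^2\lesssim \xi^{-2\alpha}.
$$
Writing $\|v_\bs\|_{X^2}^2=\sigma_{2;\bs}^{-2}(\sigma_{2;\bs}\|v_\bs\|_{X^2})^2$, this reduces to bounding $\sup_\bs \min(1,(\sigma^{q_2}/\xi)^{2\alpha})\sigma^{-2}$ with $\sigma=\sigma_{2;\bs}$. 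For $\sigma^{q_2}\le\xi$ we have $\sigma^{2\alpha q_2-2}\le 1$ when $\alpha q_2\le1$, giving $\xi^{-2\alpha}$. For $\sigma^{q_2}>\xi$ we have $\sigma^{-2}<\xi^{-2/q_2}\le\xi^{-2\alpha}$ since $\alpha\le 1/q_2-1/2<1/q_2$. So the condition $\alpha\le 1/q_2-1/2$ ensures $\alpha q_2<1$, and the sup-bound argument closes this case.

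\emph{Case $\alpha>1/q_2-1/2$.} The complement of $A$ consists of $\bs$ with $\sigma_{1;\bs}^{q_1}>\xi(\log\xi)^\eta$, or with $\sigma_{2;\bs}>\xi^\vartheta$ (where even $k=0$ fails). For the first set, I estimate in $X^1$ by $\sum\sigma_{1;\bs}^{-2}(\sigma_{1;\bs}\|v_\bs\|_{X^1})^2\le M_1\,(\xi(\log\xi)^\eta)^{-2/q_1}\le C\xi^{-2(1/q_1-1/2)}$. This is generous, and the log factor is harmless or even helps. For $\bs\in A$, we have $2^{-\tau k_\bs}\le 2^\tau\sigma_{2;\bs}\xi^{-\vartheta}$, hence
$$
2^{-2\alpha k_\bs}\lesssim(\sigma_{2;\bs}\xi^{-\vartheta})^{2\alpha/\tau}=(\sigma_{2;\bs}\xi^{-\vartheta})^{2-q_2}.
$$
Then
$$
\sum_{A}(\sigma_{2;\bs}\xi^{-\vartheta})^{2-q_2}\|v_\bs\|_{X^2}^2=\xi^{-\vartheta(2-q_2)}\sum_A\sigma_{2;\bs}^{-q_2}(\sigma_{2;\bs}\|v_\bs\|_{X^2})^2\le M_2\,\xi^{-2(1/q_1-1/2)},
$$
using $\vartheta(2-q_2)=2(1/q_1-1/2)$. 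The remaining part of the complement, $\sigma_{2;\bs}>\xi^\vartheta$, is bounded by $\sum\sigma_{2;\bs}^{-2}(\sigma_{2;\bs}\|v_\bs\|_{X^2})^2\le M_2\xi^{-2\vartheta}$. Since $2\vartheta\ge\vartheta(2-q_2)$, this is again at most $C\xi^{-2(1/q_1-1/2)}$.

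The main obstacle is bookkeeping rather than depth. One must check that the $k$-sections are initial segments, so that the telescoping yields exactly $P_{2^{k_\bs}}$, and that the $\delta_0$ convention is consistent (here $P_0=0$). One must also confirm that the exponent identities $2\alpha/\tau=2-q_2$ and $\vartheta(2-q_2)=2(1/q_1-1/2)$ produce precisely the stated rates. The only case-specific inequality is $\alpha q_2<1$ in the first case, which follows from $\alpha\le 1/q_2-1/2$.
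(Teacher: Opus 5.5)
Your proof is correct and is essentially the paper's argument. You reduce to $p=2$, apply Parseval in $\Ll_2(X^1)$, and telescope each $k$-section to $P_{2^{k_\bs}}(v_\bs)$; in the case $\alpha\le 1/q_2-1/2$ the paper instead sums the bounds $\|\delta_k(v_\bs)\|_{X^1}\le C2^{-\alpha k}\|v_\bs\|_{X^2}$ over the tail, which is equivalent. You then close with the same weighted $\ell_2$ estimates and the identities $2\alpha/\tau=2-q_2$ and $\vartheta(2-q_2)=2(1/q_1-1/2)$.

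Your separate treatment of indices with empty $k$-section is slightly more careful than the paper, whose $N(\xi,\bs)$ drops below $1$ when $\sigma_{2;\bs}>\xi^\vartheta$. Two small points are worth noting. First, the telescoping needs the intended convention $\delta_0=P_1$; the literal $\delta_0:=P_0=0$ would give $P_{2^{k_\bs}}-P_1$. Second, for $\xi$ near $1$ the factor $(\log\xi)^{-2\eta/q_1}$ can only be absorbed by falling back on the trivial bound $\sum_{\bs}\|v_\bs\|_{X^1}^2$.
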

	\begin{proof} In this proof the positive constant $C$ may change its value from place to place but is independent of $\xi$. Because of the inequality
		$\|\cdot\|_{\Ll_p(X^1)} \le  \|\cdot\|_{\Ll_2(X^1)}$,  it is sufficient to prove the theorem for $p=2$.
		Under Assumption~\ref{assum2}, in a fashion analogous to the proof of \cite[Lemma 3.4]{Dung21-v11}, 
                the function $v$ can be represented as the series 
		\begin{equation} \label{[delta-seriesJ]}
			v
			\ = \
			\sum_{(k,\bs) \in \NN_0 \times \FF} \delta_k (v_\bs)\, J_\bs
		\end{equation}
		converging absolutely and hence, unconditional  in $\Ll_2(X^1)$ to $v$. 
		
		We first consider the case $\alpha \le 1/q_2 - 1/2$.
		We have by Parseval's identity and the unconditional convergence of the 
                series \eqref{[delta-seriesJ]} that
		\begin{equation} \nonumber
			\begin{split}
				\|v - \Ss_{G(\xi)} v\|_{\Ll_2(X^1)}^2
				\ &= \
				\Bigg\|\sum_{(k,\bs) \in \NN_0 \times \FF} \delta_k (v_\bs) \, J_\bs - 
				\sum_{\bs \in \FF}  \sum_{2^k \sigma_{2;\bs}^{q_2} \le \xi} \delta_k (v_\bs) \, J_\bs\Bigg\|_{\Ll_2(X^1)}^2 
				\\[1.5ex]
				\ &= \
				\Bigg\|\sum_{\bs \in \FF}  \ \sum_{\xi \sigma_{2;\bs}^{-q_2}<2^k } \delta_k (v_\bs) \, J_\bs\Bigg\|_{\Ll_2(X^1)}^2 
				\ = \
				\sum_{\bs \in \FF}  \ \Bigg\|\sum_{\xi \sigma_{2;\bs}^{-q_2}<2^k } \delta_k (v_\bs)\Bigg\|_{X^1}^2 
				\\[1.5ex]
				\ &\le \
				\sum_{\bs \in \FF}  \ \Bigg(\sum_{\xi \sigma_{2;\bs}^{-q_2}<2^k } \|\delta_k (v_\bs)\|_{X^1}\Bigg)^2
				\ \le \
				\sum_{\bs \in \FF}  \ \Bigg(\sum_{\xi \sigma_{2;\bs}^{-q_2}<2^k } C \, 2^{-\alpha k}\|v_\bs\|_{X^2}\Bigg)^2 
				\\[1.5ex]
				\ &\le \
				C \, \sum_{\bs \in \FF} \|v_\bs\|_{X^2}^2 \ \Bigg(\sum_{2^k >\xi \sigma_{2;\bs}^{-q_2}}  2^{-\alpha k}\Bigg)^2
				\ \le \
				C \, \sum_{\bs \in \FF} \|v_\bs\|_{X^2}^2 \ (\xi \sigma_{2;\bs}^{-q_2})^{-2\alpha}. 
			\end{split}
		\end{equation}
		Hence, by  the inequalities  $q_2\alpha \le 1$ and $\sigma_{2;\bs} >1$, and 
		\eqref{weighted summability} we derive that
		\begin{equation} \nonumber
			\begin{split}
				\|v - \Ss_{G(\xi)} v\|_{\Ll_2(X^1)}^2
				\ &\le \
				C \, \xi^{- 2\alpha} \sum_{\bs \in \FF} 
				\big(\sigma_{2;\bs} \|v_\bs\|_{X^2}\big)^2 
				\ \leq \
				\ C \, \xi^{- 2\alpha},
			\end{split}
		\end{equation}
		which proves the lemma for the case $\alpha \le 1/q_2 - 1/2$.
		
		Let us consider the case $\alpha > 1/q_2 - 1/2$. 
                Putting
		\[
		v_\xi
		:=
		\sum_{\{ \bs : \sigma_{1;\bs}^{q_1} \le \xi (\log\xi)^\eta\} } v_\bs J_\bs,
		\]
		we get
		\begin{equation*} \label{triangle-ineq}
			\|v- \Ss_{G(\xi)} v\|_{\Ll_2(X^1)}
			\ \le \
			\|v- v_\xi\|_{\Ll_2(X^1)} + \|v_\xi - \Ss_{G(\xi)} v\|_{\Ll_2(X^1)}.
		\end{equation*}
		By Lemma~\ref{l:I lambda xi} the series \eqref{J-series} converges unconditionally in $\Ll_2(X^1)$ to $v$.
		Hence,  employing Assumption~\ref{assum2}, item (iv), 
                the norm $\|v- v_\xi\|_{\Ll_2(X^1)}$  can be estimated by
		\begin{equation*} \label{|u-u_xi|}
			\begin{split}
				\|v- v_\xi\|_{\Ll_2(X^1)}^2 
				\ &= \ 
				\sum_{\sigma_{1;\bs}^{q_1} > \xi (\log\xi)^\eta} \|v_\bs\|_{X^1}^2
				\ = \
				\sum_{\sigma_{1;\bs}^{q_1} > \xi (\log\xi)^\eta } \sigma_{1;\bs}^{-2} \big(\sigma_{1;\bs}\|v_\bs\|_{X^1}\big)^2 
				\\[1.5ex] 
				\ &\le \big(\xi (\log\xi)^\eta\big)^{-2/q_1}
				\sum_{\bs \in \FF}  (\sigma_{1;\bs}\|v_\bs\|_{X^1})^2 
				\ \le  C \big(\xi (\log\xi)^\eta\big)^{-2/q_1}.
			\end{split}
		\end{equation*}
		
		For the norm $\|v_\xi - \Ss_{G(\xi)} v\|_{\Ll_2(X^1)}$, with 
		$N= N(\xi,\bs):= 2^{\big\lfloor\log_2\big(  \sigma_{2;\bs}^{-1/\tau} 
			\xi^{\vartheta/ \tau}\big)\big\rfloor}$ 
		we have 
		\begin{equation} \nonumber
			\begin{split}
				\|v_\xi - \Ss_{G(\xi)} v\|_{\Ll_2(X^1)}^2
				\ &= \
				\sum_{\sigma_{1;\bs}^{q_1}  \le  \xi (\log\xi)^{\eta} }
				\Bigg\| v_\bs - \sum_{2^k \le \sigma_{2;\bs}^{-1/\tau}  \xi^{\vartheta/\tau}}  \delta_k (v_\bs) \Bigg\|_{X^1}^2  
				\\[1.5ex]
				\ &= \
				\sum_{\sigma_{1;\bs}^{q_1}  \le  \xi(\log\xi)^\eta }  \Big\| v_\bs - P_N (v_\bs) \Big\|_{X^1}^2 
				\ \le \
				C\, \sum_{\sigma_{1;\bs}^{q_1}  \le  \xi(\log\xi)^\eta } N^{-2\alpha} \| v_\bs \|_{X^2}^2 
				\\[1.5ex]
				\ &\le \
				C\, \sum_{\sigma_{1;\bs}^{q_1}  \le  \xi(\log\xi)^\eta } 
				\big(\sigma_{2;\bs}^{-1/\tau}  \xi^{\vartheta/\tau}\big)^{-2\alpha} \| v_\bs \|_{X^2}^2 
				\\& = \ 
				C\, \xi^{- 2\vartheta \alpha/\tau}
				\sum_{\sigma_{1;\bs}^{q_1}  \le  \xi(\log\xi)^\eta }\sigma_{2;\bs}^{2\alpha/\tau}\| v_\bs \|_{X^2}^2 
				\\[1.5ex]
				\ &= \ 
				C\, \xi^{-2(1/q_1 - 1/2)}
				\sum_{\sigma_{1;\bs}^{q_1}  \le  \xi(\log\xi)^\eta }\sigma_{2;\bs}^{2-q_2}\| v_\bs \|_{X^2}^2 
				\\[1.5ex]
				\ &\le \ 
				C\, \xi^{-2(1/q_1 - 1/2)}
				\sum_{\bs \in \FF}\big(\sigma_{2;\bs}\| v_\bs \|_{X^2}\big)^2 
				\ \le \ 
				C\, \xi^{-2(1/q_1 - 1/2)}.
			\end{split}
		\end{equation} 
		Here we used the equalities $\vartheta \alpha/\tau = 1/q_1 - 1/2$, 
		\ $2\alpha/\tau = 2 - q_2$ 
		and Assumption~\ref{assum2}, item~(iv).
		Summing up,  we find 
		\begin{equation} \nonumber
			\begin{split}
			\|v- \Ss_{G(\xi)} v\|_{\Ll_2(X^1)}
				\le
				C\, \xi^{- (1/q_1 - 1/2)}
			\end{split}
		\end{equation}
in the case $\alpha>1/q_2-1/2$.		\hfill
	\end{proof}
	
	\begin{definition}\label{def:IG}
		Given an index set $G \subset \NN_0 \times \FF$ 
		with the structure \eqref{G(xi)}, 
		we introduce the \emph{sparse tensor product interpolation operator}
		$\Ii_G: C(\IIi, X^2) \to \Vv(G)$
		by
		\begin{equation} \label{eq:SPTI}
			\Ii_G v
			:= \
			\sum_{(k,\bs) \in G} (\delta_k \otimes \Delta^{{\rm I}}_\bs) (v).
		\end{equation}
		Here, the sparse-grid, tensor-product interpolation increments 
                $\Delta^{{\rm I}}_\bs$ are as in \eqref{Delta s def}.
	\end{definition}
	
	The sparse-grid interpolation operator $\Ii_G v$ 
	corresponds to a linear (i.e. non-adaptive), fully discrete polynomial interpolation approximation.
	It is constructed by a sum over the index set $G$, 
	of anisotropic tensor products of dyadic, successive differences of spatial  approximations to $v$, 
	and of successive differences of  tensorized Lagrange interpolating polynomials. 
	
	The symmetry of the univariate Jacobi probability measures $\mu_{a_j,b_j}$ 
		in the ultra-spherical case when $a_j=b_j>-1$, $j \in \NN$, 
        implies the cancellation
	\begin{equation}\label{eq:JacSym}
		\int_{\IIi} J_{\bs} (\by) \rd\mu (\by) = 0  \quad \mbox{when there exists $j$ such that $\nu_j$ is odd}.
	\end{equation}
	A corresponding set of \emph{symmetric sparse tensor-product interpolators} 
	on $C(\IIi, X^2)$ exploits these cancellations, and will be relevant in Section~\ref{Integration}
	below 
	for the corresponding sparse-grid quadratures, 
	as observed first in \cite{Zec18T,ZS20} in the Legendre case and 
        later in \cite{Dung21,Dung21-v11} in the Jacobi case. 
	\begin{definition}\label{def:IGev}
		[Symmetric sparse tensor product interpolator] 
		For a finite index set  $G \subset \NN_0 \times \FF_\rev$ 
		with the structure \eqref{G(xi)},
		the interpolation operators 
		$$
		\Ii_G^*: C(\IIi, X^2) \to \Vv(G)
		$$ 
		as defined as in \eqref{eq:SPTI}, 
		with the tensorized increments $\Delta^{{\rm I}_\bs}$ for $\bs \in \FF_\rev$
		replaced by $\Delta^{{\rm I}*}_\bs := \bigotimes_{j \in \NN} \Delta^{{\rm I}*}_{\nu_j}$,
		with $\Delta^{{\rm I}*}_{\nu_j}$ defined as in \eqref{Delta m* def}.
	\end{definition}

\begin{theorem}\label{thm: FullyPolAppr}
	[Sparse-grid tensor-product interpolation convergence]
	Suppose that $v$ satisfies Assumption~\ref{assum2}.
		
		Then for the index set $G(\xi)$ in Definition~\ref{def:G(xi)} and 
		for each $n\in \NN$ there exists a number $\xi_n$ such that 
		$\dim \Vv(G(\xi_n)) \le  n$.	
		Furthermore, 
		there exists a constant $C>0$ such that 
		for any $0< p \le 2$ and any $n\in \NN$,
                we have 
		for the sparse-grid tensor-product interpolation operator
		$$
		\Ii_{G(\xi_n)}: C(\IIi, X^2) \to \Vv(G(\xi_n)),
		$$ 
                the error bound
	\begin{align}
		\norm{v-\Ii_{G(\xi_n)}v}{\Ll_p( X^1)} 
		\le 
		C
		\begin{cases}
			n^{-\alpha} &\text{if } \alpha \leq  1/q_2 - 1/2,
			\\
			n^{-\beta}(\log n)^\kappa &\text{if } \alpha >  1/q_2 - 1/2,
		\end{cases} 
		\label{vSGxi3}
	\end{align}
	where $\alpha$ is the convergence rate given by~\eqref{SpatialAppr} and 
	\begin{equation}\label{beta}
		\beta :=\brac{\frac{1}{q_1} - \frac{1}{2}}
		\frac{\alpha}{\alpha +\delta},\ \ \ \delta:=\frac{1}{q_1}-\frac{1}{q_2}, \ \ \
		0 < \kappa := \
		\frac{\alpha +1/2 - 1/q_2}{\alpha +1/q_1 - 1/q_2} \ < \ 1.
	\end{equation}	
\end{theorem}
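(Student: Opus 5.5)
The proof combines three ingredients: a dimension count for $\Vv(G(\xi))$, the projection error of Lemma~\ref{lemma[L_p-approx]}, and a bound on the difference $\Ss_{G(\xi)}v-\Ii_{G(\xi)}v$. As in the proof of Lemma~\ref{lemma[L_p-approx]}, it suffices to treat $p=2$. Moreover, since $\|\cdot\|_{\Ll_2(X^1)}\le\|\cdot\|_{\Ll_\infty(X^1)}$, we may estimate the interpolation part in $C(\IIi,X^1)$.

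\textbf{Dimension count.} Since $\dim V_{2^k}\le 2^k$ (reading the dimension bound in Assumption~\ref{assum2}(iii) as $\dim V_m\le m$), we have
$$\dim\Vv(G(\xi))\le\sum_{(k,\bs)\in G(\xi)}2^k .$$
In the case $\alpha\le 1/q_2-1/2$ the constraint $2^k\le \xi\sigma_{2;\bs}^{-q_2}$ gives the geometric bound $\sum_{k}2^k\le 2\xi\sigma_{2;\bs}^{-q_2}$. Summing over $\bs$ and using $\sigma_{2;\bs}^{-q_2}\le (p_\bs^{2/q_2}\sigma_{2;\bs}^{-1})^{q_2}$ (as $p_\bs\ge1$) together with Assumption~\ref{assum2}(iv) yields $\dim\le C\xi$.

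In the case $\alpha>1/q_2-1/2$ we have $2^k\le(\xi^\vartheta\sigma_{2;\bs}^{-1})^{1/\tau}$, restricted to those $\bs$ with $\sigma_{1;\bs}^{q_1}\le \xi(\log\xi)^\eta$. I would write
$$\sigma_{2;\bs}^{-1/\tau}=\sigma_{2;\bs}^{-q_2}\,\sigma_{2;\bs}^{q_2-1/\tau}$$
and bound the second factor. If $q_2-1/\tau\le0$ it is at most $1$. Otherwise I would need a comparison between $\sigma_2$ and $\sigma_1$. The cleaner route is a Hölder split of $\sum_{\bs\in\Lambda_\xi}\sigma_{2;\bs}^{-1/\tau}$, where $\Lambda_\xi:=\{\bs:\sigma_{1;\bs}^{q_1}\le \xi(\log\xi)^\eta\}$ has $\#\Lambda_\xi\le C\xi(\log\xi)^\eta$ by the $q_1$-summability. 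Interpolating between the counting bound and the $\ell_{q_2}$-bound for $\sigma_2^{-1}$ with exponent $1/\tau<q_2$ gives
$$\dim\le C\,\xi^{\vartheta/\tau}\,\big(\xi(\log\xi)^\eta\big)^{1-1/(\tau q_2)}.$$
Since $\vartheta/\tau=(1/q_1-1/2)/\alpha$, this yields $\dim\le C\xi^{(\alpha+\delta)/\alpha}(\log\xi)^{\eta'}$. Choosing $\xi_n$ so that the right-hand side equals $n$ gives $\xi_n\asymp n^{\alpha/(\alpha+\delta)}(\log n)^{-\cdots}$. Inserting this into Lemma~\ref{lemma[L_p-approx]} produces $n^{-\beta}(\log n)^\kappa$, and tracking the log exponent should give exactly $\kappa$. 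This bookkeeping is the main obstacle. If the exponents do not match, I would instead adjust the threshold in $G(\xi)$ to absorb the logarithm.

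\textbf{Interpolation versus projection.} I would show
$$\|\Ss_{G(\xi)}v-\Ii_{G(\xi)}v\|_{C(\IIi,X^1)}\le C\xi^{-\text{rate}}.$$
Insert the expansion $v=\sum_{\bs'}v_{\bs'}J_{\bs'}$, which converges absolutely in $C(\IIi,X^2)$ by Lemma~\ref{l:I lambda xi}, into $\Ii_G$. Since $\Delta^{\rm I}_\bs J_{\bs'}=0$ unless $\bs\le\bs'$ (interpolation reproduces polynomials of degree $\le m$), we get
$$\Ii_Gv=\sum_{\bs'}\sum_{k}\delta_k(v_{\bs'})\sum_{\bs\le\bs',(k,\bs)\in G}\Delta^{\rm I}_\bs J_{\bs'} .$$
For each fixed $k$ the slice $\{\bs:(k,\bs)\in G\}$ is downward closed, provided $\bsigma_i$ is increasing, as is standard; I will assume this or replace $\sigma$ by its increasing majorant. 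If $\bs'$ lies in the slice, the inner sum equals $J_{\bs'}$ by telescoping. Hence the difference involves only pairs with $\bs'\notin$ slice, and each term is bounded by
$$\|\delta_k(v_{\bs'})\|_{X^1}\sum_{\bs\le\bs'}\|\Delta^{\rm I}_\bs J_{\bs'}\|_\infty\le C2^{-\alpha k}\|v_{\bs'}\|_{X^2}\,p_{\bs'}(\theta,\lambda),$$
using \eqref{norm{Delta_m(v)}}, \eqref{|J_bs|<}, the definition of $C_\varepsilon$, and $\#R_{\bs'}\le\prod(1+\nu'_j)$, which accounts for the extra $+1$ in $\theta$.

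It remains to sum over $(k,\bs')\notin G$. Split this into $\bs'$ outside the $\sigma_1$-threshold (all $k$) and $\bs'$ inside with large $k$. For each part, Cauchy--Schwarz against $\sigma_{i;\bs'}\|v_{\bs'}\|_{X^i}$ and the $\ell_{q_i}$-summability of $p^{2/q_i}\sigma_i^{-1}$ yield tail bounds $\xi^{-(1/q_1-1/2)}$ or $\xi^{-\alpha}$, exactly paralleling the proof of Lemma~\ref{lemma[L_p-approx]}, with $\ell_1$ tails $\sum_{\sigma^{q}>t}p\,\sigma^{-1}\|v\|\sigma\le t^{-(1/q-1/2)}$. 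Combining the three steps via the triangle inequality finishes the proof.
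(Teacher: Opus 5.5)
\textbf{Gap in the interpolation-remainder step, case $\alpha>1/q_2-1/2$.} Most of your outline matches the paper: the reduction to $p=2$, the dimension count, the telescoping on downward closed slices, the bound $\sum_{\bs\le\bs'}\|\Delta^{\rm I}_{\bs}J_{\bs'}\|_\infty\le p_{\bs'}(\theta,\lambda)$, and the case $\alpha\le 1/q_2-1/2$. Your log bookkeeping also works out with no change to $G(\xi)$. With $q:=\tau q_2$ one has $1/q'=1-(1/q_2-1/2)/\alpha$, and $\eta(1/q_1-1/2)=1$ gives the log exponent $\frac{\alpha}{(\alpha+\delta)q'}=\kappa$ exactly.

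The problem is the part of the remainder with $\sigma_{1;\bs'}^{q_1}>\xi(\log\xi)^\eta$ that you take ``for all $k$''.
\begin{itemize}
\item Your per-term bound is $C2^{-\alpha k}\|v_{\bs'}\|_{X^2}p_{\bs'}$. Summed over all $k$ it leaves $\sum_{\sigma_{1;\bs'}^{q_1}>\xi(\log\xi)^\eta}p_{\bs'}\|v_{\bs'}\|_{X^2}$. This is an $X^2$-quantity over an index set defined by $\sigma_1$. Assumption~\ref{assum2} gives no comparison between $\sigma_1$ and $\sigma_2$, and no control of $X^2$-norms by $X^1$-norms. So Cauchy--Schwarz against $\sigma_{1;\bs'}\|v_{\bs'}\|_{X^1}$ is unavailable and no rate follows.
\item If you instead use $X^1$-stability, $\|\delta_k(w)\|_{X^1}\le 2C_1\|w\|_{X^1}$, you can pair with $\sigma_1$, but the sum over all $k$ diverges.
\end{itemize}
This step is not ``exactly parallel'' to Lemma~\ref{lemma[L_p-approx]}. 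There the $k$-sum telescopes back to $v_{\bs}$ inside a Parseval bound. Here the operators $I_{\Lambda_k(\xi)\cap R_{\bs'}}$ change with $k$, so you are left with an $\ell_1$-sum of $\|\delta_k(v_{\bs'})\|_{X^1}$.

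\textbf{The missing idea} is the paper's split $A(\xi)=A_1(\xi)+A_2(\xi)$.
\begin{itemize}
\item First remove all pairs with $2^{\tau k}\sigma_{2;\bs'}>\xi^\vartheta$, for every $\bs'$. These are handled with the $X^2$-bound and $\sigma_2$, as you already do.
\item Only then treat the $\sigma_1$-tail with $2^{\tau k}\sigma_{2;\bs'}\le\xi^\vartheta$. Since $\sigma_{2;\bs'}>1$, this forces $k\le k(\xi)=O(\log\xi)$. Equivalently, for larger $k$ the slice $\Lambda_k(\xi)$ is empty and the term vanishes exactly; your majorization by $\sum_{\bs\le\bs'}$ throws this information away.
\item On these $O(\log\xi)$ levels, $X^1$-stability costs a factor $\log\xi$. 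The resulting bound is $C\log\xi\,\big(\xi(\log\xi)^\eta\big)^{-(1/q_1-1/2)}=C\xi^{-(1/q_1-1/2)}$, because $\eta(1/q_1-1/2)=1$.
\end{itemize}
So the factor $(\log\xi)^\eta$ in $G(\xi)$ exists to absorb this logarithmic loss in the interpolation remainder, not to fix the dimension count.

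A smaller point: replacing $\bsigma_i$ by its increasing majorant is not harmless. It can destroy $\sum_{\bs}(\sigma_{i;\bs}\|v_\bs\|_{X^i})^2<\infty$, so monotonicity of $\bsigma_1,\bsigma_2$ has to be assumed, as the paper implicitly does.
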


\begin{proof} 
This theorem 
is proved along the lines of the proof of \cite[Theorem 3.1]{Dung21-v11}.
We provide details for completeness. 
It is sufficient to prove the theorem for $p=2$. 	
From the condition \eqref{weighted summability} in Assumption~\ref{assum2}  
it follows that the series \eqref{J-SeriesX} converges unconditionally  
in $\Ll_2( X^1)$ to $v$ by Lemma \ref{l:I lambda xi}. In this proof the constant $C$ may change its value from place to place but is always independent of $\xi$.

\medskip
\noindent Step 1: \emph{Relation of the sparse-grid interpolant $I_{\Lambda}v$ to 
          the truncated Jacobi gpc expansion.}
We have that $\Delta^{{\rm I}}_\bs J_{\bs'} = 0$ for every $\bs \not\le \bs'$. 
If  $\Lambda \subset \FF$ is a downward closed set in $\FF$, 
then $I_{\Lambda} J_\bs = J_\bs$ for every $\bs \in \Lambda$, 
and hence we can write
	\begin{equation}\label{I_Lambda}
		I_{\Lambda}v
		\ = \
		I_{\Lambda}\bigg(\sum_{ \bs \in \FF} v_\bs \,J_\bs \bigg)
		\ =  \
		\sum_{ \bs \in \FF} v_\bs \,I_{\Lambda} J_\bs
		\ =  \
		\sum_{ \bs \in \Lambda}  v_\bs \, J_\bs
		\ + \ \sum_{\bs \not\in \Lambda} v_\bs \, I_{\Lambda \cap R_\bs}\, J_\bs.
	\end{equation}

	Let $\xi>1$ be given. 
        For $k \in  \NN_0$, put
	\begin{equation} \nonumber
		\Lambda_k(\xi)
		:= \ 
		\begin{cases}
			\big\{\bs \in \FF: \,\sigma_{2;\bs}^{q_2} \leq 2^{-k}\xi\big\} \quad & \text{ if }  \ 
			\alpha \le 1/q_2 - 1/2;\\
			\big\{\bs \in \FF: \, \sigma_{1;\bs}^{q_1} \le \xi (\log \xi)^\eta, \  
			\sigma_{2;\bs} \leq 2^{- \tau k}\xi^\vartheta \big\} \quad  & \text{ if }  \ 
			\alpha > 1/q_2 - 1/2.
		\end{cases}
	\end{equation}
Define further
$$
k(\xi):= \left\{ \begin{array}{l} \lfloor \log_2 \xi \rfloor \;\;\mbox{if}\;\; \alpha \le 1/q_2 - 1/2,
\\
	\lfloor \vartheta \tau^{-1}\log_2 \xi \rfloor \;\; \mbox{if} \;\;\alpha > 1/q_2 - 1/2.
\end{array}\right.
$$
	Observe that $\Lambda_k(\xi) = \emptyset$ for all $k > k(\xi)$, and consequently, 
	we have that
	\begin{equation}  \label{Eq[Ii]}
		\Ii_{G (\xi)} v
		\ = \ 
		\sum_{k=0}^{k(\xi)} \delta_k \Bigg(\sum_{ \bs \in \Lambda_k(\xi)} \Delta^{{\rm I}}_\bs \Bigg)v
		\ = \ 
		\sum_{k=0}^{k(\xi)} \delta_k I_{\Lambda_k(\xi)}v.
	\end{equation}
	Since the sequence $(\sigma_{2;\bs})_{\bs \in \FF}$ is increasing,  
        the index sets $\Lambda_k(\xi)$ are downward closed sets in $\FF$ and, consequently, 
        the sequence $\big\{\Lambda_k(\xi)\big\}_{k=0}^{k(\xi)}$ is nested in the inverse order, 
        i.e., 
        $\Lambda_{k'} \subset \Lambda_k(\xi)$ if $k' > k$, 
        and 
	$\Lambda_0$ is the largest and $\Lambda_{k_0} = \{0_\FF\}$ for some $0\leq k_0\leq k(\xi)$ and $\Lambda_k=\emptyset$ if $k>k_0$.

	From the unconditional convergence of the series \eqref{[delta-seriesw]} to $v$,  
        and from \eqref{Eq[Ii]} and \eqref{I_Lambda} we derive that
	\begin{equation}\nonumber
		\begin{split}
			\Ii_{G (\xi)} v
			\ &= \
			\sum_{k=0}^{k(\xi)}  \sum_{ \bs \in \Lambda_k(\xi)} \delta_k(v_\bs) \, J_\bs
			\ + \ 
			\sum_{k=0}^{k(\xi)}  \sum_{\bs \not\in \Lambda_k(\xi)} \delta_k(v_\bs) 
			I_{\Lambda_k(\xi) \cap R_\bs}\, J_\bs
			\\[1.5ex]
			\ &= \
			\Ss_{G (\xi)} v
			\ + \ 
			\sum_{k=0}^{k(\xi)}  \sum_{\bs \not\in \Lambda_k(\xi)} \delta_k(v_\bs) 
			I_{\Lambda_k(\xi) \cap R_\bs}\, J_\bs.
		\end{split}
	\end{equation}
	This implies that
	\begin{equation} \label{v-Ii_G}
		v \ - \ \Ii_{G (\xi)} v
		\ = \
		v \ - \ \Ss_{G (\xi)} v
		\ - \ 
		\sum_{k=0}^{k(\xi)}  \sum_{\bs \not\in \Lambda_k(\xi)} \delta_k(v_\bs) 
		I_{\Lambda_k(\xi) \cap R_\bs}\, J_\bs.
	\end{equation}
	Observe that for $k \le k(\xi)$, if $\bs \not\in\Lambda_k(\xi)$, 
        then $(k,\bs) \not\in G(\xi)$.  
        Hence, by \eqref{v-Ii_G} it follows that 
	\begin{equation} \label{[|u-Iu|<]1}
		\big\|v- \Ii_{G (\xi)} v\big\|_{\Ll_2(X^1)}
		\ \le \
		\big\|v- \Ss_{G (\xi)} v\big\|_{\Ll_2(X^1)} 
		+  
		\ \sum_{(k,\bs) \not\in G (\xi)} \|\delta_k(v_\bs)\|_{X^1} \, 
		\big\|I_{\Lambda_k(\xi) \cap R_\bs}\, J_\bs\big\|_{L_2(\IIi,\mu)}.
	\end{equation}
	
\noindent
Step 2:        Next, \emph{we claim}
\begin{equation} \label{I_Lambdap}
	\norm{ 
		I_{\Lambda_k(\xi)\cap R_{\bs}}
		\brac{
			J_{\bs}
	}}{L_2(\mI^\infty,\mu
		)}
	\le 
	p_{\bs}(\theta, \lambda),
\end{equation}
with $\theta$ and $\theta$ being given in \eqref{theta,lambda}.
This can be proven in a manner analogous to the proof of \cite[Eqn.~(3.26)]{Dung21-v11}.  
We recall the definition~\eqref{I Lambda xi def} of the sparse-grid interpolant $I_\Lambda$, 
and have 
	\begin{align}
		\norm{ 
			I_{\Lambda_k(\xi)\cap R_{\bs}}
			\brac{
				J_{\bs}
		}}{L_\infty(\mI^\infty)}
		\le 
		\sum_{\bs'\in \Lambda_k(\xi)\cap R_{\bs}}
		\norm{ 
			\Delta^{{\rm I}}_{\bs'}
			\brac{
				J_{\bs}
		}}{L_\infty(\mI^\infty)}.
		\label{I Lambda 1}
	\end{align}
	Since $\bs'\leq \bs$, 
        from \eqref{norm{Delta_m(v)}} 
        we derive that
	\[
	\norm{ \Delta^{{\rm I}}_{\nu'_j}\brac{J_{\nu_j}}}{L_\infty(\mI)}
	\le 
	2\log(2\nu_j'+3)
	\norm{J_{\nu_j}}{L_\infty(\mI)}
	\leq 2(1 + \lambda_0 \nu_j)^{\theta_0} \log(2\nu_j+3),
	\]
with $\theta_0$ and $\lambda_0$ being given in \eqref{lambda_0} and \eqref{theta_0}, respectively.
	Hence, we have 
	$$
	\norm{ 
		\Delta^{{\rm I}}_{\nu_j'}
		\brac{J_{\nu_j}}}{L_\infty(\mI)}
	\leq
	(C_\varepsilon \nu_j+1)^{\theta_0+\varepsilon}.
	$$
	This, together with~\eqref{I Lambda 1},
        gives 
	\begin{align*} \nonumber
		\norm{ 
			I_{\Lambda_k(\xi)\cap R_{\bs}}
			\brac{
				J_{\bs}
		}}{L_\infty(\mI^\infty)}
		&
		\le
		\sum_{\bs'\in \Lambda_k(\xi)\cap R_{\bs}} \prod_{j\in \supp(\bs)}
		(C_\varepsilon \nu_j+1)^{\theta_0+\varepsilon}
		\notag
		\\
		&
		\le 
		\abs{R_{\bs}} \prod_{j\in \supp(\bs)}
		(C_\varepsilon \nu_j+1)^{\theta_0+\varepsilon}
		\notag
		\le 
		p_{\bs}(1)\,p_{\bs}(\theta_0+\varepsilon,C_\varepsilon)
		\notag
		\\
		&
		\le 
		p_{\bs}(\theta_0 + 1 +\varepsilon,C_\varepsilon+1)
		= p_\bs(\theta, \lambda).
		\label{I Lambda Js1}
	\end{align*}
	This proves \eqref{I_Lambdap}.
	
\noindent Step 3: 
        From  \eqref{[|u-Iu|<]1}  and \eqref{I_Lambdap} it follows that 
	\begin{equation} \label{[|v-Iv|}
		\big\|v- \Ii_{G (\xi)} v\big\|_{\Ll_2(X^1)}
		\ \le \
		\big\|v- \Ss_{G (\xi)} v\big\|_{\Ll_2(X^1)} 
		+  
		A(\xi),
	\end{equation}
	where
	\begin{equation} \label{A(xi)}
		A(\xi):= \ \sum_{(k,\bs) \not\in G (\xi)} \|\delta_k(v_\bs)\|_{X^1}\cdot 
		p_\bs(\theta, \lambda). 
	\end{equation}
	In the next steps, 
        we use the inequality \eqref{[|v-Iv|} 
        to establish bounds for $\big\|v- \Ii_{G (\xi)} v\big\|_{\Ll_2(X^1)}$.

\noindent Step 4: {\it The case $\alpha \le 1/q_2  - 1/2$.}	
	Lemma  \ref{lemma[L_p-approx]}  gives
	\begin{equation}  \label{v-Ss_Gv}
		\Big\|v- \Ss_{G (\xi)} u\Big\|_{\Ll_2(X^1)}
		\ \le \ 
		C\,  \xi^{-\alpha}.
	\end{equation}
	Let us estimate the term $A(\xi)$ in \eqref{A(xi)} which appears 
        in the right-hand side of \eqref{[|v-Iv|}. 
Bounding $\| \delta_k(v_{\bnu}) \|_{X^1}$ in~\eqref{A(xi)} with~\eqref{del k1} 
        we derive that
	\begin{equation} \nonumber
		\begin{split}
			A(\xi)
			\ &\le \
			C	\sum_{(k,\bs) \not\in G (\xi)} 2^{-\alpha k}p_\bs(\theta, \lambda) \|v_\bs\|_{X^2}
			\ = \
			C \sum_{\bs \in \FF} p_\bs(\theta, \lambda)  \|v_\bs\|_{X^2} \ \sum_{2^k >\xi \sigma_{2;\bs}^{-q_2}}  2^{-\alpha k}
			\\[1.5ex]
			\ &\le \
			C \sum_{\bs \in \FF} p_\bs(\theta, \lambda)  \|v_\bs\|_{X^2} \ \big(\xi \sigma_{2;\bs}^{-q_2}\big)^{-\alpha}
			\ \le \
			C \xi^{- \alpha}\, \sum_{\bs \in \FF} p_\bs(\theta, \lambda)  \sigma_{2;\bs}^{q_2\alpha}\|v_\bs\|_{X^2}.
		\end{split}
	\end{equation}
	By  the inequalities  $2(1 - q_2\alpha) \ge q_2$ and $\sigma_{2;\bs} >1$ and the assumptions we have that
	\begin{equation} \nonumber
		\begin{split}
			\sum_{\bs \in \FF} p_\bs(\theta, \lambda)  \sigma_{2;\bs}^{q_2\alpha}\|v_\bs\|_{X^2}
			\ &\le \
			\left(\sum_{\bs \in \FF}  (\sigma_{2;\bs}\|v_\bs\|_{X^2})^2\right)^{1/2}
			\left(\sum_{\bs \in \FF} p_\bs(\theta, \lambda)^2 \sigma_{2;\bs}^{-2(1 - q_2\alpha)}\right)^{1/2}
			\\[1.5ex]
			\ &\le \
			\left(\sum_{\bs \in \FF}  \big(\sigma_{2;\bs}\|v_\bs\|_{X^2}\big)^2\right)^{1/2}
			\left(\sum_{\bs \in \FF} p_\bs(\theta,\lambda)^2\sigma_{2;\bs}^{- q_2}\right)^{1/2}
			 < \infty.
		\end{split}
	\end{equation}	
	Thus, we obtain in \eqref{A(xi)}
	\begin{equation} \nonumber
		A(\xi)
		\ \le \
		C \xi^{-\alpha}.
	\end{equation}
	This together with \eqref{[|v-Iv|} and \eqref{v-Ss_Gv} 
        implies that
	\begin{equation} \nonumber
		\|v- \Ii_{G(\xi)} u\|_{\Ll_2(X^1)}
		\  \le \ C \xi^{-\alpha}.
	\end{equation}
We also have by \eqref{weighted summability}
	\begin{equation} \nonumber
	\begin{split}
		\dim \Vv(G(\xi))
		  &\le 
		\ \sum_{(k,\bs) \in G(\xi)} \dim V_{2^k} 
		  \le 
		\sum_{\sigma_{2;\bs}^{q_2} \leq 2^{-k}\xi}  2^k \leq \sum_{\bs\in \FF} \xi \sigma_{2;\bs}^{-q_2}= \xi \sum_{\bs\in \FF}  \sigma_{2;\bs}^{-q_2} \leq C\xi.
	\end{split}
\end{equation}
	Hence, for each $n \in \NN$ we can find  a number $\xi_n$  such that  $\dim \Vv(G(\xi_n)) \le  n$ and 
	\begin{equation} \label{u-I_Gu, alpha<}
		\|v -\Ii_{G(\xi_n)}v\|_{\Ll_2(X^1)} \leq Cn^{-\alpha}, \quad \alpha \le 1/q_2 - 1/2.
	\end{equation}
	This proves the result in the case $\alpha \le 1/q_2 - 1/2$.

\noindent Step 5:
	{\it The case $\alpha > 1/q_2  - 1/2$.}
	Lemma \ref{lemma[L_p-approx]} gives 
	\begin{equation} \label{v - Ss_G}
		\big\|v- \Ss_{G (\xi)} v\big\|_{\Ll_2(X^1)} 
		\  \le \  C \xi^{-(1/q_1 - 1/2)}.
	\end{equation}
	
	We split $A(\xi)$ in \eqref{A(xi)} 
        into two sums as
	\begin{equation*} 
		A(\xi)
		= 
		A_1(\xi) + A_2(\xi),
	\end{equation*}
	where
	\begin{equation*} 
		A_1(\xi)
		:= 
		\sum_{\sigma_{1;\bs}^{q_1} > {\xi (\log\xi)^\eta}, \  
			\sigma_{2;\bs} \leq 2^{- \tau k}\xi^\vartheta}  {\|\delta_k(v_\bs) \|_{X^1}}  
		p_\bs(\theta, \lambda),	
	\end{equation*}
	and
	\begin{equation*} 
		A_2(\xi)
		:= 
		\sum_{ 
			\sigma_{2;\bs} > 2^{- \tau k}\xi^\vartheta}  {\|\delta_k(v_\bs) \|_{X^1}} \, 
		p_\bs(\theta, \lambda).
	\end{equation*}
	We get by Assumption~\ref{assum2}, item (iii),
	\begin{equation} \label{A_1<}
		\begin{split}
			A_1(\xi)
			\ &\le \ 	
			\sum_{\sigma_{1;\bs}^{q_1} > 
                            \xi (\log\xi)^\eta} {\sum_{k=0}^{k(\xi)}} \|\delta_k(v_\bs)\|_{X^1}	p_\bs(\theta, \lambda)
			\\
			\  &\le  \
			C \sum_{\sigma_{1;\bs}^{q_1} > 
                             \xi (\log\xi)^\eta} \sum_{k=0}^{k(\xi)}\|v_\bs\|_{X^1} p_\bs(\theta,\lambda)
			\\
			\ &\le \ 	
			C	\log \xi \sum_{\sigma_{1;\bs}^{q_1} > \xi (\log\xi)^\eta}
			\|v_\bs\|_{X^1}	 \, p_\bs(\theta,\lambda).
		\end{split}
	\end{equation}
	We obtain by H\"older's inequality and the hypothesis of the theorem,
	\begin{equation*} 
		\begin{split}
			\sum_{\sigma_{1;\bs}^{q_1} > \xi (\log\xi)^\eta}   &\|v_\bs\|_{X^1}  \, p_\bs(\theta,\lambda)
			\\ &\le \
			\left(\sum_{\sigma_{1;\bs}^{q_1} > \xi (\log\xi)^\eta} (\sigma_{1;\bs}\|v_\bs\|_{X^1})^2\right)^{1/2}
			\left(\sum_{\sigma_{1;\bs}^{q_1} > \xi (\log\xi)^\eta} p_\bs(\theta,\lambda)^{2} \sigma_{1;\bs}^{-2}\right)^{1/2}
			\\[1.5ex]
			\ &\le \
			C\,
			\left(\sum_{\sigma_{1;\bs}^{q_1} > \xi (\log\xi)^\eta} p_\bs(\theta,\lambda)^{2} \sigma_{1;\bs}^{-q_1} 
			\sigma_{1;\bs}^{-(2- q_1)}\right)^{1/2}
			\\[1.5ex]
			\ &\le \
			C \big(\xi(\log \xi)^\eta\big)^{-(1/q_1 - 1/2)}
			\left(\sum_{\bs \in \FF} p_\bs(\theta,\lambda)^{ 2} \sigma_{1;\bs}^{-q_1} \right)^{1/2}
			\\[1.5ex]
			\ &\le \
			C \xi^{-(1/q_1 - 1/2)}
			(\log \xi)^{-\eta(1/q_1 - 1/2)}.
		\end{split}
	\end{equation*} 
	Due to the equality $\eta(1/q_1 - 1/2) =1$, this and \eqref{A_1<} yield that
	\begin{equation} \label{A_1}
		A_1(\xi)
		\  \le \  C \xi^{-(1/q_1 - 1/2)}.
	\end{equation}
	
	We now give a bound for $A_2(\xi)$. 
        Observe that $\vartheta \alpha/\tau = 1/q_1 - 1/2$ and 
	$\alpha/\tau =1 -  q_2/2$. 
	Employing \eqref{del k1}, 
        the assumption~\eqref{weighted summability} 
        and H\"older's inequality, we get 
	\begin{equation*} 
		\begin{split}
			A_2(\xi)
			\ &\le \
			\sum_{\bs \in \FF} \
			\sum_{ 2^k > \left(\xi^\vartheta\sigma_{2;\bs}^{-1}\right)^{1/\tau} }
			\|	\delta_k(v_\bs)\|_{X^1} \, 
			p_\bs(\theta,\lambda)
			\\[1.5ex]
			\ &\le \
			C \, \sum_{\bs \in \FF} \
			\sum_{ 2^k > \left(\xi^\vartheta\sigma_{2;\bs}^{-1}\right)^{1/\tau} }
			2^{-\alpha k} \|v_\bs\|_{X^2}\, p_\bs(\theta,\lambda)							
			\\[1.5ex]
			\ &\le \
			C\,
			\sum_{\bs \in \FF} \
			\big(  \sigma_{2;\bs}^{-1/\tau} \xi^{\vartheta /\tau}\big)^{- \alpha }\|v_\bs\|_{X^2}\, p_\bs(\theta,\lambda)
			\\[1.5ex]
			\ &= \
			C\,
			\xi^{- \vartheta \alpha/\tau }\sum_{\bs \in \FF} \
			\sigma_{2;\bs}^{\alpha/\tau} 
			\|v_\bs\|_{X^2}\, p_\bs(\theta,\lambda)
			\\[1.5ex]
			\ &= \
			C\,
			\xi^{- (1/q_1 - 1/2)}\	\sum_{\bs \in \FF} \
			\sigma_{2;\bs}^{1 - q_2/2} 
			\|v_\bs\|_{X^2}\, p_\bs(\theta,\lambda)
			\\
			\ &\le \
			C\,
			\xi^{- (1/q_1 - 1/2)}\	
			\left(\sum_{\bs \in \FF}  (\sigma_{2;\bs}\|v_\bs\|_{X^2})^2\right)^{1/2}
			\left(\sum_{\bs \in \FF}  p_\bs^2(\theta,\lambda) \sigma_{2;\bs}^{-q_2}\right)^{1/2}
			\\
			\ &\le \
			C\,
			\xi^{- (1/q_1 - 1/2)}.
		\end{split}
	\end{equation*}
	This proves that 
	\begin{equation} \label{A_2}
		A_2(\xi)
		\  \le \  C \xi^{-(1/q_1 - 1/2)}.
	\end{equation} 
	
	Combining \eqref{A_1}, \eqref{A_2}  and \eqref{v - Ss_G} leads to the estimate
	\begin{equation} \label{xi-rate4}
		\|v- \Ii_{G(\xi)} v\|_{\Ll_2(X^1)}
		\  \le \  C \xi^{-(1/q_1 - 1/2)}.
	\end{equation}
	
	We   estimate the dimension of the space $\Vv(G(\xi))$.  
        Put $q:= \tau q_2 $ and define $q'$ by $1/q' + 1/q = 1$. 
	Since $\alpha > 1/q_2 - 1/2$, we have $q >1$. 
        Consequently, 
        using H\"older's inequality and \eqref{weighted summability}, 
        we derive that
	\begin{equation} \nonumber
		\begin{split}
			\dim \Vv(G(\xi))
			\ &\le \
			\ \sum_{(k,\bs) \in G(\xi)} \dim V_{2^k} 
			\ \le \ 
			\sum_{\sigma_{1;\bs}^{q_1} \le \xi(\log\xi)^\eta} \, 
			\sum_{2^{\tau k} \sigma_{2;\bs} \leq \xi^\vartheta} 2^k 
			\\ \  &\le \
			2 \sum_{\sigma_{1;\bs}^{q_1} \le \xi(\log\xi)^\eta} \xi^{\vartheta/\tau}\sigma_{2;\bs}^{-1/\tau} 
			\\& \le \
			2 \xi^{\vartheta/\tau} \left(\sum_{\sigma_{1;\bs}^{q_1} \le \xi(\log\xi)^\eta}\sigma_{2;\bs}^{-q_2} \right)^{1/q}
			\left(\sum_{\sigma_{1;\bs}^{q_1} \le \xi(\log\xi)^\eta} 1 \right)^{1/q'}
			\\ \  &\le 
			2 \xi^{\vartheta/\tau}\left(\sum_{\bs \in \FF} \sigma_{2;\bs}^{-q_2} \right)^{1/q}
			\left(\sum_{\bs \in \FF} \xi (\log\xi)^\eta \sigma_{1;\bs}^{-q_1}  \right)^{1/q'}
			\\ \  &=
			M \xi^{\vartheta/\tau + 1/q'} (\log\xi)^{\eta/q'}
			\ =  \
			M \xi^{1 + \delta/\alpha}(\log\xi)^{\eta/q'},
		\end{split}
	\end{equation}
	where
	$
	M:=  2 \big\|\big(\sigma_{2;\bs}^{-1}\big)\big\|_{\ell_{q_2}(\FF)}^{q_2/q} \,
	\big\|\big(\sigma_{1;\bs}^{-1}\big)\big\|_{\ell_{q_1}(\FF)}^{q_1/q'}.
	$
	
	For any $n \in \NN$, letting $\xi_n$ be a number satisfying the inequalities 
	\begin{equation} \label{[xi_n]}
		M\, \xi_n^{1 + \delta/\alpha}(\log\xi_n)^{\eta/q'}
		\ \le \
		n
		\ < \ 2M\, \xi_n^{1 + \delta/\alpha}(\log\xi_n)^{\eta/q'},
	\end{equation}
	we derive that  $\dim \Vv(G(\xi_n)) \le  n$.
	On the other hand, by \eqref{[xi_n]},
	\begin{equation} \nonumber
		\xi_n^{-(1/q_1 - 1/2)} 
		\asymp \bigg(\frac{n}{(\log n)^{\eta/q'}}\bigg)^{-(1/q_1 - 1/2)\frac{\alpha}{\alpha + \delta}}
		\ = \ n^{-\beta}(\log n)^\kappa,
	\end{equation}
	where with $q_1 < 2$ and $\alpha > 1/q_2 - 1/2$, we have
	\begin{equation} \nonumber
		0 \ < \	\kappa := \
		\frac{\alpha +1/2 - 1/q_2}{\alpha +1/q_1 - 1/q_2} \ < \ 1.
	\end{equation}
	This together with \eqref{xi-rate4} proves that
	\begin{equation} \nonumber
		\|v -\Ii_{G(\xi_n)}v\|_{\Ll_2(X^1)} \leq Cn^{-\beta}(\log n)^\kappa, \quad \alpha > 1/q_2 - 1/2.
	\end{equation}
	
	By combining the last estimate and \eqref{u-I_Gu, alpha<} we derive  \eqref{vSGxi3}.
	\hfill
\end{proof}
\begin{remark} 
	\rm{
		Fully discrete GPC interpolation approximation of functions in 
		Bochner spaces with countable product tensor product Jacobi measure 
                and applications to affine-parametric PDEs  
                was studied in 
		\cite{Dung19,Dung21,Zec18T,ZDS19}. 
                Theorem~\ref{thm: FullyPolAppr} in the case 
		$\alpha \le \frac{1}{q_2}-\frac{1}{2}$ was proven in  \cite[Theorem 6.3]{Dung21-v11}.  
	Theorem~\ref{thm: FullyPolAppr} in the case 
	$\alpha > \frac{1}{q_2}-\frac{1}{2}$ improves the result  of  \cite[Theorem 6.3]{Dung21-v11} by a logarithm factor of $(\log n)^{-(1-\kappa)}$.  }
\end{remark}

\begin{definition}\label{def:G_rev(xi)}
	For $\xi >1$, $\tau$, $\vartheta$ and $\eta$ as in \eqref{vartheta:=},
	denote
	\begin{equation*}\label{G_rev(xi)}
		G_\rev(\xi)
		:=
		\begin{cases}
			\sett{
				(k,\bs)\in \NN_0\times \FF_\rev: 2^k \sigma_{2;\bs}^{q_2} \le \xi
			}
			&
			\text{if } \alpha \le 1/q_2 - 1/2,
			\\
			\sett{
				(k,\bs)\in \NN_0\times \FF_\rev: \sigma_{1;\bs}^{q_1}\le \xi (\log \xi)^\eta, \ 
				2^{\tau k} \sigma_{2;\bs}  \le \xi^\vartheta
                        }
			&
			\text{if } \alpha > 1/q_2 - 1/2.
		\end{cases}
	\end{equation*}
\end{definition}
In a similar way as in the proof of Theorem~\ref{thm: FullyPolAppr},
we can prove 
\begin{corollary}\label{corollary: FullyPolApprEv}
	Let $v \in \Ll_2(X^2)$ admit a even Jacobi GPC expansion
	\begin{equation} \label{JacobiSeriesVev}
		v = \sum_{\bs\in\FF_\rev} v_\bs J_\bs, \quad v_\bs \in X^2.
	\end{equation}
	Suppose that $v$ satisfies Assumption~\ref{assum2} with $\FF$ being replaced by $\FF_\rev$
	and let $G_\rev(\xi)$ be as in Definition \ref{def:G_rev(xi)}.
	
	Then for each $n\in \NN$ there exists a number $\xi_n$ such that 
	for the symmetric sparse tensor interpolation operator 
	$$
	\Ii^*_{G_\rev(\xi_n)}: C(\IIi, X^2) \to \Vv(G_\rev(\xi_n)),
	$$ 
	holds $\dim \Vv(G_\rev(\xi_n)) \le  n$.
	
	Furthermore, 
	there exists a constant $C>0$ such that for $n\in \NN$ and for $0 < p \le 2$ 
	it holds that
	\begin{align*}
		\norm{v-\Ii_{G_\rev(\xi_n)}v}{\Ll_p( X^1)} 
		\le
		C
	\begin{cases}
			n^{-\alpha} &\text{if } \alpha \leq  1/q_2 - 1/2,
			\\
			n^{-\beta}(\log n)^\kappa &\text{if } \alpha >  1/q_2 - 1/2,
		\end{cases} 
		\label{vSGxi3Rev}
	\end{align*}
	where the convergence rate $\alpha$ is  given by~\eqref{SpatialAppr}, 
        and $\beta$ and $\kappa$ by \eqref{beta}.	
\end{corollary}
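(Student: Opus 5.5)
The proof will follow the five steps of the proof of Theorem~\ref{thm: FullyPolAppr} verbatim, with $\FF$ replaced by $\FF_\rev$, $G(\xi)$ by $G_\rev(\xi)$, and the increments $\Delta^{{\rm I}}_\bs$ by $\Delta^{{\rm I}*}_\bs$ (the statement's $\Ii_{G_\rev(\xi_n)}$ being read as $\Ii^*_{G_\rev(\xi_n)}$). As before it suffices to take $p=2$. Lemma~\ref{l:I lambda xi} and Lemma~\ref{lemma[L_p-approx]} need no real change: their proofs only use Parseval's identity, the Cauchy--Schwarz/H\"older inequality and the summability \eqref{weighted summability}, now summed over $\FF_\rev$. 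They give unconditional convergence of \eqref{JacobiSeriesVev} in $\Ll_2(X^1)$ and in $C(\IIi,X^1)$, together with the rates $\xi^{-\alpha}$, respectively $\xi^{-(1/q_1-1/2)}$, for $\|v-\Ss_{G_\rev(\xi)}v\|_{\Ll_2(X^1)}$.

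The step that genuinely needs rechecking is Step~1, the relation between $I^*_\Lambda:=\sum_{\bs\in\Lambda}\Delta^{{\rm I}*}_\bs$ and truncated expansions. For $\Lambda\subset\FF_\rev$ downward closed in $\FF_\rev$, I will show that $I^*_\Lambda J_\bs=J_\bs$ for every $\bs\in\Lambda$, which gives the analogue of \eqref{I_Lambda}. Univariately, $\sum_{m\in 2\NN_0,\, m\le 2r}\Delta^{{\rm I}*}_m=I_{2r}$ by telescoping, and $I_{2r}$ reproduces polynomials of degree $\le 2r$, in particular $J_{\nu}$ for even $\nu\le 2r$. Tensorizing, $\sum_{\bs'\in R^\rev_\bs}\Delta^{{\rm I}*}_{\bs'}=\bigotimes_j I_{\nu_j}$ reproduces $J_\bs$, where $R^\rev_\bs:=\{\bs'\in\FF_\rev:\bs'\le\bs\}$. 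Moreover $\Delta^{{\rm I}*}_m J_\nu=0$ whenever $m>\nu$ (with $m,\nu$ even), since then both $I_m$ and $I_{m-2}$ reproduce $J_\nu$. Hence $I^*_\Lambda J_\bs=I^*_{\Lambda\cap R^\rev_\bs}J_\bs$, which equals $J_\bs$ if $\bs\in\Lambda$ by downward closedness. The sets $\Lambda_k(\xi)\subset\FF_\rev$ are downward closed in $\FF_\rev$ because the weights are increasing. This yields the representation \eqref{v-Ii_G} and the bound \eqref{[|u-Iu|<]1}, with the sum running over $(k,\bs)\in(\NN_0\times\FF_\rev)\setminus G_\rev(\xi)$.

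For Step~2, I bound $\|I^*_{\Lambda_k(\xi)\cap R^\rev_\bs}J_\bs\|_{L_\infty}$ using \eqref{norm{Delta_m(v)}} for $\Delta^{{\rm I}*}$, which is already stated there with the same constant. Then $\#R^\rev_\bs\le\#R_\bs\le p_\bs(1)$ gives $\le p_\bs(\theta,\lambda)$ exactly as before. Steps~4 and~5, namely the estimates of $A(\xi)$, $A_1(\xi)$, $A_2(\xi)$ and of $\dim\Vv(G_\rev(\xi))$, are then literally the same computations with sums over $\FF_\rev$, each of which is dominated by the hypotheses on $\FF_\rev$. The choice of $\xi_n$ via \eqref{[xi_n]} then produces the rates $n^{-\alpha}$ and $n^{-\beta}(\log n)^\kappa$.

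The only place I expect friction is Step~1: one must confirm that the even increments cancel correctly, i.e.\ that $\Delta^{{\rm I}*}_m J_\nu=0$ for even $m>\nu$ and that $I^*$ telescopes to $\bigotimes_j I_{\nu_j}$. Both follow from polynomial reproduction by $I_m$. Note also that the Chebyshev interpolant $I_{2r}$ of a function need not be even, but this plays no role: the error analysis only uses the values $I^*_{\Lambda\cap R^\rev_\bs}J_\bs$ and their sup-norm bounds.
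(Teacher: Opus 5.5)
Your proposal is correct and takes the paper's route. The paper proves this corollary only by saying it follows "in a similar way as in the proof of Theorem~\ref{thm: FullyPolAppr}", that is, with $\FF$, $G(\xi)$, $\Delta^{{\rm I}}_{\bs}$ replaced by $\FF_{\rev}$, $G_{\rev}(\xi)$, $\Delta^{{\rm I}*}_{\bs}$. Your explicit check of Step~1, namely the even telescoping $\sum_{\bs'\in R^{\rev}_{\bs}}\Delta^{{\rm I}*}_{\bs'}=\bigotimes_j I_{\nu_j}$ and the vanishing $\Delta^{{\rm I}*}_m J_\nu=0$ for even $m>\nu$, supplies the detail the paper leaves implicit; like the paper, you rely on monotonicity of the weights to make $\Lambda_k(\xi)$ downward closed in $\FF_{\rev}$.
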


\subsection{Sparse-grid quadratures}
\label{Integration}
In this section, we construct linear fully-discrete quadratures for numerical integration
of functions taking values in $X^2$ 
with double-weighted $\ell_2$-summability of Jacobi GPC expansion coefficients  
for Hilbert spaces $X^1$ and $X^2$ satisfying a certain ``spatial" approximation property, 
as specified in Assumption~\ref{assum2}, and their bounded linear functionals. 
In particular,
we give convergence rates for these quadratures
which are derived from the results on convergence rate of 
polynomial interpolation approximation in  
$\Ll_1(X^1)$ in Corollary~\ref{corollary: FullyPolApprEv}.

Assume that $a_j = b_j$ for $j \in \NN$ in the definition \eqref{eq:ProdJPM} of the measure $\mu_{\ba,\bb}$. 
This case corresponds to the symmetric ultra-spherical measure $\mu_{\ba,\ba}$. 
This symmetry property allows to establish in the application settings in the next section,  
a crucial improvement of the convergence rate of sparse-grid quadrature
with respect to the infinite-tensor-product, ultra-spherical weight 
due to the cancellation of anti-symmetric terms.

If $v$ is a function defined on $\II$ taking values in a Hilbert space $X$,  
the function  $I_m(v)$  defined in \eqref{Imv Lmk}  
generates the $j$th component interpolatory quadrature formulas 
defined as
\begin{equation} \nonumber
	Q_{\nu_j}(v)
	:= \ \int_{\II} I_{\nu_j}(v)(y_j) \, \rd\mu_{a_j,b_j}(y_j)
	\ = \
	\sum_{k=0}^{\nu_j}\omega_{\nu_j,k}\, v(y_{\nu_j,k}),  \ \ j \in \NN,
\end{equation}	
where the quadrature weights are given by
\begin{equation} \nonumber
	\omega_{\nu_j,k} :=  \int_{\II} L_{\nu_j,k}(y_j) \, \rd\mu_{a_j,b_j}(y_j).
\end{equation}
The  quadrature $Q_{\nu_j}$ being interpolatory, 
we have for every polynomial $\varphi$ in variable $y_j$ of degree $\le \nu_j$,
\begin{equation} \nonumber
	Q_{\nu_j}(\varphi)
	\  = \ 
	\int_{\II} \varphi(y_j) \, \rd\mu_{a_j,b_j}(y_j)
	\;.
\end{equation}	
We define the univariate ``increment'' or ``detail'' operator 
$\Delta^{{\rm Q}}_{\nu_j}$ for even $\nu_j \in 2 \NN_0$ 
by
\begin{equation} \nonumber
	\Delta^{{\rm Q}}_{\nu_j}
	:= \
	Q_{\nu_j} - Q_{\nu_j - 2},
	\quad \mbox{with the convention $Q_{-2} := 0$}.
\end{equation} 
For a function $v\in C(\IIi;X)$, 
for $\bs \in \FF_\rev$ introduce the tensorized increment operator 
(with respect to the measure $\mu=\mu_{\ba,\bb}$)
\begin{equation} \nonumber
	\Delta^{{\rm Q}}_\bs(v)
	:= \
	\bigotimes_{j \in \NN} \Delta^{{\rm Q}}_{\nu_j}(v),
\end{equation} 
where the univariate operator
$\Delta^{{\rm Q}}_{\nu_j}$ 
is applied to the univariate function 
$\bigotimes_{i < j} \Delta^{{\rm Q}}_{\nu_i}(v)$ 
by considering $v$ as a function of variable $y_j$ 
with all remaining variables held fixed. 
As $\Delta^{{\rm I}}_\bs$, 
the operators  $\Delta^{{\rm Q}}_\bs$ are well-defined for all $\bs \in \FF_\rev$. 

Let Assumption~\ref{assum2}  hold for Hilbert spaces $X^1$ and $X^2$, 
and $v \in \Ll_2(X^2)$. 
For a finite index set $G \subset \NN_0 \times \FF_\rev$ 
with the structure \eqref{G(xi)},
we introduce the quadrature operator $\Qq_G$ 
which is generated by the 
\emph{symmetric Smolyak sparse-grid tensor-product interpolator}
$\Ii^*_G:  C(\IIi, X^2) \to \Vv(G)$, 
defined for $v \in C(\IIi, X^2)$ by
\begin{equation*}  \label{Qq=int}
	\Qq_G v
	:= \
	\sum_{(k,\bs) \in G} (\delta_k \otimes \Delta^{{\rm Q}}_\bs) (v)
	\ = \
	\int_{\IIi} \Ii^*_G v (\by)\, \rd\mu(\by).
\end{equation*}
Further, if $\phi \in (X^1)'$  is a bounded linear functional on $X^1$, 
for a finite index set 
$G \subset \NN_0 \times \FF_\rev$, 
with the structure \eqref{G(xi)},
the quadrature formula $\Qq_G v$ generates the quadrature formula 
$\Qq_G \langle \phi, v \rangle$ for integration of $\langle \phi, v \rangle$ 
by
\begin{equation} \nonumber
	\Qq_G \langle \phi, v \rangle
	:= \
	\langle \phi, \Qq_G v  \rangle
	\ = \
	\int_{\IIi} \langle \phi, \Ii^*_G v (\by) \rangle\, \rd\mu(\by).
\end{equation}

The sets $Y_m$ in \eqref{eq:ChebNod} 
of Chebyshev nodes are symmetric with respect to the origin for every $m \in \NN_0$.
In the ultra-spherical case $a_j=b_j$, $j \in \NN$
of the Jacobi probability measure $\mu(\by)$,
then it holds $\int_{\II} J_k(x) \rd x =0$, if $k \in \NN$ is odd.
For a function  $v \in \Ll_2(X)$ that is represented by the Jacobi gpc series \eqref{J-SeriesX}, 
the assumed symmetry of the product measure $\mu$ with respect to $\bsnul$ 
implies (see \eqref{eq:JacSym})
\begin{equation*} \label{int v = int v_ev}
	\int_{\IIi} v(\by) \, \rd\mu(\by)
	\ = \
	\int_{\IIi} v_\rev(\by) \, \rd\mu(\by),
\end{equation*}
and
\begin{equation*} \label{Delta^QH_s=0}
	\Delta^{{\rm Q}}_{\bs'} J_\bs(\by) 
	\ = \
	0, \quad \bs \notin \FF_\rev, \ \bs' \in \FF.
\end{equation*}

\begin{theorem}\label{thm[quadrature]} 
Let  $a_j=b_j$, $j \in \NN$, for the Jacobi probability measure $\mu_{\ba,\bb}$.
Consider a function  $v \in \Ll_2(X^2)$ represented by the
Jacobi GPC series  \eqref{JacobiSeriesVev}, 
and such that $v$ satisfies Assumption~\ref{assum2} 
with $\FF$ being replaced by $\FF_\rev$.
For $\xi > 0$, 
let $G_\rev(\xi)$ be the index set in Definition~\ref{def:G_rev(xi)}.

Then, 
there is a constant $C>0$ (depending on $v$) 
such that
	\begin{itemize}
		\item[{\rm (i)}]
		For each $n \in \NN$,
                there exists a number $\xi_n$  
                such that  $\dim\Vv(G_\rev(\xi_n))\le n$ 
                and 
		\begin{equation*} \label{u-Q_Gu}
		\left\|\int_{\IIi}v(\by)\, \rd\mu(\by) - \Qq_{G_\rev(\xi_n)}v\right\|_{X^1} 
                \leq 
        	C
	\begin{cases}
		n^{-\alpha} &\text{if } \alpha \leq  1/q_2 - 1/2,
		\\
		n^{-\beta}(\log n)^\kappa &\text{if } \alpha >  1/q_2 - 1/2.
	\end{cases} 
		\end{equation*}
		\item[{\rm (ii)}] 
		Let  $\phi \in (X^1)'$ be a bounded linear functional on $X^1$. 
		Then, for each $n \in \NN$ 
                there exists $\xi_n \in \RR$ such that  $\dim\Vv(G_\rev(\xi_n))\le n$ 
		and 
		\begin{equation*} \label{u-Q_Gu_phi}
			\left|\int_{\IIi} \langle \phi,  v (\by) \rangle\,  \rd\mu(\by) 
                               - \Qq_{G_\rev(\xi_n)} \langle \phi,  v \rangle\right| 
                \leq C\|\phi\|_{(X^1)'}
		\begin{cases}
			n^{-\alpha} &\text{if } \ \alpha \leq  1/q_2 - 1/2,
			\\
			n^{-\beta} (\log n)^\kappa &\text{if } \ \alpha >  1/q_2 - 1/2.
		\end{cases} 
		\end{equation*}
	\end{itemize}	
	The rate $\alpha$ is given by~\eqref{SpatialAppr}, $\beta$ and $\kappa$ by \eqref{beta}.
\end{theorem}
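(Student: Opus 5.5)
The plan is to reduce the quadrature error to the $\Ll_1(X^1)$ interpolation error from Corollary~\ref{corollary: FullyPolApprEv}. By definition,
\[
\Qq_{G_\rev(\xi)} v = \int_{\IIi} \Ii^*_{G_\rev(\xi)} v(\by)\,\rd\mu(\by).
\]
Since $\mu$ is a probability measure, the Bochner integral satisfies $\|\int w\,\rd\mu\|_{X^1}\le \|w\|_{\Ll_1(X^1)}$. It follows that
\[
\Big\|\int_{\IIi} v\,\rd\mu - \Qq_{G_\rev(\xi)}v\Big\|_{X^1} \le \big\|v - \Ii^*_{G_\rev(\xi)}v\big\|_{\Ll_1(X^1)}.
\]
I will then take $p=1$ in Corollary~\ref{corollary: FullyPolApprEv}, with the same $\xi_n$ that gives $\dim\Vv(G_\rev(\xi_n))\le n$. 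This yields (i) with the rates $n^{-\alpha}$ and $n^{-\beta}(\log n)^\kappa$.

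The step that needs care is that the corollary is stated for the symmetric interpolator $\Ii^*$, which uses the even increments $\Delta^{{\rm I}*}_\bs$. I will re-check that its proof goes through verbatim as in Theorem~\ref{thm: FullyPolAppr}, by redoing the three steps in the even setting.

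\textbf{Step 1.} For $\Lambda\subset\FF_\rev$ downward closed in $\FF_\rev$, set $I^*_\Lambda=\sum_{\bs\in\Lambda}\Delta^{{\rm I}*}_\bs$. Then $I^*_\Lambda J_\bs = J_\bs$ for $\bs\in\Lambda$, because $\sum_{\nu'\le\nu,\ \nu'\text{ even}}\Delta^{{\rm I}*}_{\nu'} = I_\nu$ telescopes and reproduces polynomials of degree $\le\nu$. Also $\Delta^{{\rm I}*}_{\bs'}J_\bs=0$ unless $\bs'\le\bs$, with the same caveat about even parity.

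\textbf{Step 2.} The bound $\|I^*_{\Lambda\cap R_\bs}J_\bs\|_{L_\infty}\le p_\bs(\theta,\lambda)$ follows from \eqref{norm{Delta_m(v)}}, which already covers $\Delta^{{\rm I}*}_m$.

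\textbf{Step 3.} Steps 3--5 of the proof of Theorem~\ref{thm: FullyPolAppr} (Lemma~\ref{lemma[L_p-approx]}, and the $A_1,A_2$ splitting with the dimension count) use only the summability assumptions, now restricted to $\FF_\rev$.

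There is a subtlety when $v$ is not purely even. If $v$ is given by \eqref{JacobiSeriesVev} this is automatic. If a general $v$ is meant, I would use $\Delta^{{\rm Q}}_{\bs'}J_\bs=0$ for $\bs\notin\FF_\rev$, together with \eqref{eq:JacSym} and the symmetry of the Chebyshev nodes. These show that the odd part of $v$ contributes nothing to either $\int v\,\rd\mu$ or $\Qq_G v$, so one may replace $v$ by $v_\rev$ before applying the argument above.

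For (ii), by linearity and continuity of $\phi$ on $X^1$, the quadrature of $\langle\phi,v\rangle$ equals $\langle\phi,\Qq_G v\rangle$, and $\int\langle\phi,v\rangle\,\rd\mu = \langle\phi,\int v\,\rd\mu\rangle$ because Bochner integrals commute with bounded functionals. Hence the error in (ii) is bounded by $\|\phi\|_{(X^1)'}$ times the error in (i).

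The main obstacle I expect is Step 1 of the even analogue. It must be confirmed that the reproduction identity and the cancellation structure of $\Delta^{{\rm I}*}$ hold on $\FF_\rev$-downward-closed sets, and that the telescoping over even levels $I_m-I_{m-2}$ is exact. Everything else is a direct transfer of the earlier estimates.
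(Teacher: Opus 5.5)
Your proposal is correct and follows the paper's route: you bound the quadrature error by $\|v-\Ii^*_{G_\rev(\xi_n)}v\|_{\Ll_1(X^1)}$ using $\Qq_G v=\int_{\IIi}\Ii^*_G v\,\rd\mu$, then invoke Corollary~\ref{corollary: FullyPolApprEv} with $p=1$, and get (ii) by applying $\phi$. Your re-check of the even telescoping $\sum_{\nu'\le\nu,\ \nu'\text{ even}}\Delta^{{\rm I}*}_{\nu'}=I_\nu$ is a sound supplement; note that the downward closedness of the thresholded sets in $\FF_\rev$, which you use in Step~1, comes from the $\sigma_{i;\bs}$ being increasing, as in the paper's own Step~1, and not from the summability assumptions alone.
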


\begin{proof}	
Using Corollary \ref{corollary: FullyPolApprEv}, 
this theorem can be proven in a similar way to the proof of \cite[Theorem 4.1]{Dung23Err} 
(see also \cite[Theorem 4.1]{Dung21-v11}) with some necessary modifications  
as in the proof of Theorem \ref{thm: FullyPolAppr}.
\hfill
\end{proof}
\section{Applications}
\label{sec:Expl}
We indicate practical applications of the preceding results.
The first, illustrative of these is the 
affine-parametric, linear elliptic model equation
which was considered in many other references, e.g.
in 
\cite{CDS10,CDS11,CCDS13,BCDC17,BCM17,DGHR18,ZDS19,MBVoulis22,Dung19,Dung21,Dung21-v11} 
and references there.

The second of these is the more abstract setting of holomorphic, implicit
operator equations as in e.g. \cite{CSZ16,MaxwShHol17,MaxwDomUQ20},
with affine-parametric input encoding as considered, e.g. \cite{CCS15,Zec18T}. 
\subsection{Affine-parametric, linear elliptic PDEs}
\label{sec:AffParEllPDE}

Let $D \subset \RR^d$ be a bounded  Lipschitz domain. 
Consider the linear diffusion elliptic equation in the divergence form
\begin{equation*} \label{ellip-0}
	- \dv (a\nabla u)
	\ = \
	f \quad \text{in} \quad D,
	\quad u|_{\partial D} \ = \ 0, 
\end{equation*}
for  a given fixed right-hand side $f$ and a 
spatially variable scalar diffusion coefficient $a$.
Denote by $V:= H^1_0(D)$ the energy space and $V^*:=H^{-1}(D)$ the dual space of $V$. 
Assume that  $f \in V^*$ (in what follows this preliminary assumption always holds without mention). 
If $a \in L_\infty(D)$ satisfies the ellipticity assumption
\begin{equation} \nonumber
0<\underset{\bx\in D}{\operatorname{ ess inf}}\,a(\bx) \leq a(\bx) \leq  \underset{\bx\in D}{\operatorname{ ess sup}}\, a(\bx)<\infty,\ \ \bx\in D,
\end{equation}
by the well-known Lax-Milgram lemma, there exists a unique 
solution $u \in V$  to the equation~\eqref{ellip}  in the weak form
\begin{equation} \nonumber
	\int_{D} a(\bx)\nabla u(\bx) \cdot \nabla v(\bx) \, \rd \bx
	\ = \
	\langle f , v \rangle   \quad \forall v \in V.
\end{equation}
Let us  consider the parametric diffusion elliptic equation
\begin{equation} \label{ellip}
	- \dv (a(\by)\nabla u(\by))
	\ = \
	f \quad \text{in} \quad D,
	\quad u|_{\partial D} \ = \ 0  \ \ \by \in \IIi,
\end{equation}
with affine-parametric diffusion coefficients 
\begin{equation*} \label{affine}
	a(\by)= \bar a + \sum_{j = 1}^\infty y_j\psi_j,\qquad \by\in \II^\infty,
\end{equation*}
where $\bar{a}$ and $(\psi_j)_{j\in \NN}$ belong to $L_\infty(D)$. 
Note that if
$$
a_{\min}:=\inf_{\by\in \II^\infty} \inf_{\bx\in D} a(\by) (\bx)>0,
$$
then 
\begin{equation*}\label{eq:uniformBounded}
\sup_{\by \in \IIi}	\|u(\by)\|_V\leq \frac{\|f\|_{V^*}}{a_{\min}}.
\end{equation*}

We introduce the space
$W^r:=\{v\in V  :   \Delta v\in H^{r-2}(D)\}$
for $r\ge 2$. 
This space is equipped with the norm
$\|v\|_{W^r}:=\|\Delta v\|_{H^{r-2}(D)}$,
and coincides with the Sobolev space $V\cap H^{r-2}(D)$ 
with equivalent norms if the domain $D$ has $C^{r-1,1}$ smoothness,
see, e.g., \cite[Theorem 2.5.1.1]{Gri85B}. 
We make use of the convention $W^1:= V$.
\begin{lemma} \label{lemma[summability]J}	
	Let $r \in \NN$. Let the right side $f$ in \eqref{ellip}
	belong to $H^{r-2}(D)$, and $D$  be a bounded Lipschitz domain for $r=1$ and a bounded domain of 
	 $C^{r-2,1}$ smoothness for $r\ge 2$. 
        Assume that $\bar a\in L_\infty(D)$  
        is such that ${\rm ess} \inf \bar a>0$,
	and that there exist sequences 
        $\brho_1=(\rho_{1;j}) _{j \in \NN}$ and $\brho_r=(\rho_{r;j}) _{j \in \NN}$
        of positive numbers such that 
	\begin{equation*} \label{eq-affine-condition}
		\left \| \frac{\sum _{j \in \NN} \rho_{1;j}|\psi_j|}{\bar a} \right \|_{L_\infty(D)} 
		< 1 \;,
	\end{equation*} 	
	and, that in the case $r \ge 2$, $\bar a$ and all functions $\psi_j$ belong to $W^{r-1,\infty}(D)$,
        that there holds
	\begin{equation} \label{eq-rho-r}
		\sup_{|\alpha|\leq r-1} 
		\Bigg\| \sum _{j \in \NN} \rho_{r;j} |D^\alpha \psi_j| \Bigg\|_{L_\infty(D)} 
		<\infty\;.
	\end{equation}	
	Then for the sequence $\bsigma_r = \brac{\sigma_{r;\bnu}}_{\bnu \in \FF}$,
	\begin{equation} \label{beta_r,s}
		\sum_{\bs\in\FF} \big(\sigma_{r;\bs} \|u_\bs\|_{W^r}\big)^2 < \infty, \quad 
		\sigma_{r;\bs} := \brho_r^\bs\prod _{j \in \NN} c_{\nu_j}^{{a_j,b_j}},
	\end{equation}	
where we recall the constants $c_{k}^{a,b}$, $k\in \NN_0$, as defined in \eqref{eq-cabk}.
\end{lemma}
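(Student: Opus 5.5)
The plan is to reduce the weighted $\ell_2$-summability of the Jacobi coefficients to a weighted summability of \emph{Taylor-type} or \emph{derivative-type} quantities, via Rodrigues' formula and integration by parts in each active variable. Fix $\bs\in\FF$ with support $S=\supp(\bs)$. From \eqref{J-series} and Rodrigues' formula, integrating by parts $\nu_j$ times in each $y_j$, $j\in S$, I will show that
\begin{equation*}
u_\bs=\int_{\IIi}\partial^\bs u(\by)\,\prod_{j\in S}\frac{c_{\nu_j}^{a_j,b_j}}{\nu_j!\,2^{\nu_j}}(1-y_j^2)^{\nu_j}\,\rd\mu(\by),
\end{equation*}
where the boundary terms vanish because $(1-y_j)^{\nu_j+a_j}(1+y_j)^{\nu_j+b_j}$ and its lower derivatives vanish at $\pm1$. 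Here I use that $u$ is smooth (indeed analytic) in each $y_j$, which follows from the uniform ellipticity implied by the $\brho_1$-condition. The factor $c^{a,b}_{\nu}$ in $\sigma_{r;\bs}$ is exactly the one appearing here. Thus $\sigma_{r;\bs}u_\bs$ becomes a $\mu$-average of $\frac{\brho_r^\bs}{\bs!}\partial^\bs u(\by)$ against a nonnegative kernel. Dropping the factors $2^{-\nu_j}(1-y_j^2)^{\nu_j}\le 1$ and applying Cauchy--Schwarz against the probability measure $\mu$ gives
\begin{equation*}
\sigma_{r;\bs}^{2}\|u_\bs\|_{W^r}^2\le \prod_{j\in S}\bigl(c_{\nu_j}^{a_j,b_j}\bigr)^2\int_{\IIi}\Big\|\frac{\brho_r^\bs}{\bs!}\partial^\bs u(\by)\Big\|_{W^r}^2\,\rd\mu(\by)\cdot\prod_j\bigl(c_{\nu_j}^{a_j,b_j}\bigr)^{-2}.
\end{equation*}
The bookkeeping needs care: I will move only the factor $\prod_j c_{\nu_j}$ coming from the formula and check that the weight is exactly cancelled, up to the harmless $2^{-\nu_j}$. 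The problem then reduces to
\begin{equation*}
\sup_{\by\in\IIi}\sum_{\bs\in\FF}\Big\|\frac{\brho_r^\bs}{\bs!}\partial^\bs u(\by)\Big\|_{W^r}^2<\infty,
\end{equation*}
after which Fubini finishes the argument.

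This weighted $\ell_2$ bound on derivatives is the known real-variable estimate of Bachmayr--Cohen--Migliorati type, cf.\ \cite{BCM18}, and I would reprove it by induction. For $r=1$, differentiate the variational equation $\bs$ times to obtain, for all $w\in V$,
\begin{equation*}
\int_D a(\by)\nabla\partial^\bs u\cdot\nabla w=-\sum_{j\in S}\nu_j\int_D\psi_j\nabla\partial^{\bs-\be_j}u\cdot\nabla w.
\end{equation*}
Test with $w=\partial^\bs u$, multiply by $\brho_1^{2\bs}/(\bs!)^2$, and sum over $|\bs|_1=k$. Using the $\brho_1$-condition (which gives uniform ellipticity and $\sum_j\rho_{1;j}|\psi_j|\le\kappa\bar a$ with $\kappa<1$) together with Cauchy--Schwarz in the weighted energy norm $\int\bar a|\nabla\cdot|^2$, I obtain a geometric decay $e_k\le\kappa e_{k-1}$ for the level sums $e_k$. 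Summing over $k$ gives the $r=1$ bound.

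For $r\ge2$ I proceed by induction on the regularity index $\ell=2,\dots,r$, using elliptic regularity for $-\dv(a\nabla\cdot)$ on $C^{r-2,1}$ domains, with coefficients in $W^{r-1,\infty}$ and right-hand side in $H^{\ell-2}$. The derivative $\partial^\bs u$ solves an equation whose right-hand side is $\sum_j\nu_j\dv(\psi_j\nabla\partial^{\bs-\be_j}u)$, plus $f$ when $\bs=\bsnul$. Its $H^{\ell-2}$ norm is controlled by $\|\partial^{\bs-\be_j}u\|_{W^\ell}$-type quantities and lower-order terms, weighted by $D^\alpha\psi_j$, $|\alpha|\le\ell-1$, through \eqref{eq-rho-r}.

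I expect the main obstacle to be this step. A naive recursion produces a sum over $j$ that does not contract, since \eqref{eq-rho-r} only gives boundedness and not a smallness condition. The remedy I would try first is the standard trick of rescaling: use $\brho_r$ with a small extra factor, or absorb the top-order term through ellipticity and bound the rest via the already-established level $\ell-1$ estimate with weights $\brho_{\ell-1}$. This gives a recursion $e^{(\ell)}_k\le C\,k^{m}\,e^{(\ell-1)}_k+\dots$. The polynomial factors in $k$ must then be absorbed by the geometric decay of the lower level, which may require weights slightly smaller than $\brho_1$ at lower levels.
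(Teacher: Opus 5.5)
\textbf{The reduction step fails.} The overall strategy (Rodrigues' formula, then weighted $\ell_2$ bounds on $\partial^\bs u$) is reasonable, and your integration-by-parts formula for $u_\bs$ is correct. The problem comes when you discard the kernel $\prod_j 2^{-\nu_j}(1-y_j^2)^{\nu_j}$ and ask for $\sup_{\by\in\IIi}\sum_{\bs}\|\brho_r^\bs\partial^\bs u(\by)/\bs!\|^2<\infty$: under the hypotheses this is false.

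At a point $\by$, the Taylor radius in $y_j$ is governed by $a(\by)$. Near the faces of $\IIi$ this can be as small as $\bar a-\sum_j|\psi_j|$, whereas the hypothesis only compares $\sum_j\rho_{1;j}|\psi_j|$ with $\bar a$. Example: take $\bar a\equiv 1$, $\psi_1\equiv 1/R$, and $\psi_j\equiv 0$ for $j\ge 2$. Then $u(\by)=\frac{R}{R+y_1}u_0$ with $-\Delta u_0=f$, and the hypothesis admits every $\rho_{1;1}<R$. But $\|\partial_{y_1}^k u(-\be_1)\|_V/k!=R(R-1)^{-k-1}\|u_0\|_V$. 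So your reduced quantity is infinite for $\rho_{1;1}\in[R-1,R)$ (this range contains values above $1$ if $R>2$), and it is not even $\mu$-integrable in $\by$ once $\rho_{1;1}>R-1$. Keeping only the factor $2^{-\nu_j}$ rescues it only when $R\ge 2$.

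The same defect sits in your $r=1$ induction. Testing the differentiated equation at $\by$ puts the $a(\by)$-energy $\|w\|_{a(\by)}^2:=\int_D a(\by)|\nabla w|^2$ on the left. The contraction $e_k\le\kappa e_{k-1}$ then needs $\sum_j\rho_{1;j}|\psi_j|\le\kappa\, a(\by)$, which the hypothesis gives only at $\by=\bsnul$.

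The weight you dropped is exactly what compensates for this. Set $\chi_\bs(\by):=\prod_j(1-y_j^2)^{\nu_j}$ and run the induction pointwise in $\by$ on $\sum_{|\bs|_1=k}\tilde{\brho}^{2\bs}\chi_\bs(\by)\|\partial^\bs u(\by)/\bs!\|_{a(\by)}^2$. The factor multiplying $|\psi_j|$ in the Cauchy--Schwarz step then becomes $\tilde\rho_j\sqrt{1-y_j^2}$. Since $a(\by)\ge\bar a-\sum_j|y_j||\psi_j|$, it suffices that $\sum_j\sqrt{\tilde\rho_j^2+\theta^2}\,|\psi_j|\le\theta\bar a$ for some $\theta<1$. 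The gap between $\tilde{\brho}$ and $\brho$ must then be recovered from the Rodrigues factors $2^{-\nu_j}$, or more sharply from the exact relation between $\int\|\partial^\bs u\|^2\chi_\bs\,\rd\mu$ and the Jacobi coefficients. Also note that uniform ellipticity does not follow from the $\brho_1$-condition unless $\rho_{1;j}\ge 1$, which the statement does not require.

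Two further points. First, for the weight as printed, $\sigma_{r;\bs}=\brho_r^\bs\prod_j c_{\nu_j}^{a_j,b_j}$, the constants do not cancel. Rodrigues' formula puts one factor $c_{\nu_j}$ into $u_\bs$ and $\sigma_{r;\bs}$ contributes another, so you are left with $\prod_j c_{\nu_j}^2 2^{-\nu_j}$. This is not bounded uniformly in $\bs$ in general; for Legendre, $c_1^2/2=3/2$. Your display inserts a factor $\prod_j c_{\nu_j}^{-2}$ that has no source. Exact cancellation happens only for the weight $\brho_r^\bs\prod_j (c_{\nu_j}^{a_j,b_j})^{-1}$.

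Second, for $r\ge 2$ the lack of contraction you anticipate is not a technicality. As written, nothing ties $\brho_r$ to the contraction condition, and the claim is false. In the example above, take $0\ne f\in H^{r-2}(D)$. Every $\rho_{r;1}$ satisfies \eqref{eq-rho-r}, whereas $u_{k\be_1}=g_k u_0$, where $g_k$ are the Jacobi coefficients of $y\mapsto R/(R+y)$. Because of the pole at $y=-R$, $\limsup_k |g_k|^{1/k}=(R+\sqrt{R^2-1})^{-1}$, so \eqref{beta_r,s} fails once $\rho_{r;1}>R+\sqrt{R^2-1}$. An additional coupling is indispensable, for example $\rho_{r;j}\le\rho_{1;j}$, or imposing the contraction condition on $\brho_r$ as well. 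Your ``shrink the weights'' remedy amounts to adding such a hypothesis. The paper itself gives no argument here; it refers to \cite{BCM17} for $r=1$ (Legendre) and to \cite{BCDC17} for $r>1$ (fixed $a,b$).
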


\begin{proof}
This lemma has been proven in \cite[Theorem 3.1]{BCM17} for $r=1$ and the Legendre expansion, 
and 
in \cite[Theorem 5.1]{BCDC17} for $r > 1$ and the scalar case when $a_j =a$, $b_j =b$ for $j \in \NN$. 
The general case can be proven in an analogous fashion with certain modifications.
\hfill	
\end{proof}



\begin{lemma} \label{lemma:bcdmJ}
	Let $0 < q <\infty$, $\kappa \in \NN$ and $\brho=(\rho_j)_{j \in \NN}$ be a sequence 
of  numbers larger than $1$ such the sequence $(\rho_j^{-1}) _{j \in \NN}$ belongs to $\ell_q(\NN)$. 
Let $(p_\bs(\theta,\lambda))_{\bs \in \FF}$ be the sequence of the form  \eqref{[p_s]} 
with arbitrary nonnegative $\theta, \lambda$. 

Then for  the sequence $\bsigma=(\sigma_\bs)_{\bs \in \FF}$ defined by
	\begin{equation} \label{sigma_bs}
		\sigma_\bs  :=  \brho^\bs \prod_{j \in \NN} c_{\nu_j}^{a_j,b_j},
	\end{equation}
	we have
	\begin{equation} \nonumber
		\sum_{\bs \in \FF_\kappa} 
		p_\bs(\theta,\lambda) \sigma_\bs^{-q/\kappa}< \infty.
	\end{equation}
\end{lemma}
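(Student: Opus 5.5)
The plan is to exploit the product structure of both weights: $p_\bs(\theta,\lambda)$ and $\sigma_\bs$ are products over $j\in\NN$ of univariate factors, and $\FF_\kappa$ is itself a ``product'' index set. Each coordinate $\nu_j$ ranges independently over $\NN_{0,\kappa}$, subject only to finite support. Expanding the infinite product (all terms are nonnegative, so monotone convergence over finite truncations justifies the rearrangement) gives
\begin{equation*}
\sum_{\bs \in \FF_\kappa} p_\bs(\theta,\lambda)\,\sigma_\bs^{-q/\kappa}
= \prod_{j\in\NN}\Bigl(1+\sum_{n\ge \kappa}(1+\lambda n)^\theta\,\rho_j^{-nq/\kappa}\,\bigl(c_n^{a_j,b_j}\bigr)^{-q/\kappa}\Bigr)
=: \prod_{j\in\NN}(1+S_j).
\end{equation*}
Here I use that the $n=0$ term contributes $1$, because $c_0^{a,b}=1$ and $p$, $\brho^\bs$ have trivial factors at $\nu_j=0$. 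It then suffices to show $\sum_j S_j<\infty$, since $\prod_j(1+S_j)\le \exp(\sum_j S_j)$.

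\textbf{Step 1 (uniform lower bound on $c_n^{a_j,b_j}$).} I would show that there is $c_*>0$ with $c_n^{a_j,b_j}\ge c_*$ for all $n\in\NN$ and $j\in\NN$. This uses $-1<\underline a\le a_j\le\|\ba\|_\infty$ and the analogous bounds for $b_j$, so $(a_j,b_j)$ lies in a set $K=[\underline a,\|\ba\|_\infty]\times[\underline b,\|\bb\|_\infty]$ whose closure stays inside $(-1,\infty)^2$. By \eqref{eq-cabk}, each $c_n^{a,b}$ is a positive continuous function of $(a,b)$ on $K$. Moreover the Gamma-ratio asymptotics give $\Gamma(n+a+b+1)n!/(\Gamma(n+a+1)\Gamma(n+b+1))\to 1$ uniformly on $K$. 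Hence $(c_n^{a,b})^2\ge c\,n$ for $n\ge n_0$ uniformly on $K$, and the finitely many remaining $n$ are handled by compactness. I expect this to be the main technical point, though it is routine. With it, $(c_n^{a_j,b_j})^{-q/\kappa}\le C$ for all $n\ge1$ and all $j$.

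\textbf{Step 2 (each factor is finite).} Since $\rho_j>1$, the geometric decay $\rho_j^{-nq/\kappa}$ dominates the polynomial growth $(1+\lambda n)^\theta$. Hence $S_j\le C\sum_{n\ge\kappa}(1+\lambda n)^\theta \rho_j^{-nq/\kappa}<\infty$ for every $j$.

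\textbf{Step 3 (tail).} Since $(\rho_j^{-1})\in\ell_q(\NN)$, we have $\rho_j\to\infty$, so there is $J$ with $\rho_j\ge 2$ for $j>J$. For such $j$, put $t_j=\rho_j^{-q/\kappa}\le t_0:=2^{-q/\kappa}<1$. Using $t_j^n\le t_j^\kappa t_0^{\,n-\kappa}$ for $n\ge\kappa$, I get
\begin{equation*}
S_j\le C\,t_j^{\kappa}\sum_{n\ge\kappa}(1+\lambda n)^\theta t_0^{\,n-\kappa}=C'\rho_j^{-q}.
\end{equation*}
Therefore $\sum_{j>J}S_j\le C'\|(\rho_j^{-1})\|_{\ell_q}^q<\infty$. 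Combined with the finitely many finite factors from Step 2, this yields $\prod_j(1+S_j)<\infty$, which is the claim.

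The choice of $\FF_\kappa$ matters here. Restricting to $\nu_j\in\{0\}\cup\{\kappa,\kappa+1,\dots\}$ is exactly what produces the exponent $t_j^\kappa=\rho_j^{-q}$ in Step 3, and that is why $\sigma_\bs^{-q/\kappa}$, rather than $\sigma_\bs^{-q}$, is summable under the hypothesis $(\rho_j^{-1})\in\ell_q(\NN)$.
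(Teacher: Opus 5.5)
Your proof is correct, but it is organized differently from the paper's. The paper never factors the sum. Instead it absorbs the normalization constants into a polynomial weight, $(\prod_j c_{\nu_j}^{a_j,b_j})^{-q/\kappa}\le p_\bs(q\theta'/\kappa,1)$, which gives $p_\bs(\theta,\lambda)\sigma_\bs^{-q/\kappa}\le p_\bs(\theta^*,\lambda^*)(\brho^{-\bs})^{q/\kappa}$. It then invokes \cite[Lemma 6.2]{Dung21} for the summability of this right-hand side over $\FF_\kappa$.

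Your Steps 2 and 3 (product factorization over coordinates, the tail bound $S_j\le C'\rho_j^{-q}$, and $\prod_j(1+S_j)\le\exp(\sum_j S_j)$) amount to a direct proof of that cited lemma. So your argument is self-contained and shows why the exponent $q/\kappa$ is matched to $\FF_\kappa$. The paper's version is shorter and modular, since it reuses a lemma stated for general polynomial weights.

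The more substantive difference is how the constants $c_n^{a_j,b_j}$ are handled. The paper justifies its absorption step by the upper bound $c_k^{a,b}\le(1+k)^\tau$. An upper bound on $c_k$, however, gives the inequality for $(\prod_j c_{\nu_j}^{a_j,b_j})^{-q/\kappa}$ in the wrong direction; what is needed is a lower bound. Your Step 1 supplies exactly that: the uniform bound $c_n^{a_j,b_j}\ge c_*>0$ for $n\ge1$, obtained from $-1<\underline a,\underline b$ and $\ba,\bb\in\ell_\infty(\NN)$. This also repairs the paper's reduction, since $c_*^{-q/\kappa}\le(1+\nu_j)^{\theta''}$ for $\nu_j\ge1$ with $\theta''=\max\{0,\log_2 c_*^{-q/\kappa}\}$.
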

	\begin{proof}
        There holds $c_k^{a,b} \le  (1 + k)^{\tau}$ for  $k \in \NN_0$
		with some   $\tau > 0$ depending on $a,b$. 
        Since $\ba,\bb\in \ell_\infty(\NN)$ we have for $\bnu\in \FF$ that
		$$
\prod_{j\in \NN}	c_{\nu_j}^{a_j,b_j}	\leq \prod_{j\in \NN}(1+\nu_j)^{\theta'}
		$$
	with $\theta'>0$ depending on $|\ba|_\infty$ and $|\bb|_\infty$.
                For any $\theta \ge 0$, 
		we get with \eqref{sigma_bs}
		\begin{align*} \label{}
		p_\bs(\theta,\lambda) \sigma_\bs^{-q/\kappa} 
                & = p_\bs(\theta,\lambda)  ( \brho^{-\bs})^{q/\kappa} 
                                \Bigg(\prod_{j \in \NN} c_{\nu_j}^{a_j,b_j}\Bigg)^{-q/\kappa}
			\\
			&
			\le  p_\bs(\theta,\lambda)  p_\bs(q \theta'/\kappa,1)( \brho^{-\bs})^{q/\kappa}
			\le p_\bs(\theta^*,\lambda^*) ( \brho^{-\bs})^{q/\kappa},
		\end{align*}
		where 
		$\theta^*:= \theta+q\theta'/\kappa$ and $\lambda^*:=\lambda+1$.
		We derive that
		\begin{equation*} \label{}
			\sum_{\bs \in \FF_\kappa} p_\bs(\theta,\lambda) \sigma_\bs^{-q/\kappa}  
			\le 
			\sum_{\bs \in \FF_\kappa} 
			p_\bs(\theta^*,\lambda^*) ( \brho^{-\bs})^{q/\kappa}. 
		\end{equation*}
		Now applying \cite[Lemma 6.2]{Dung21} to the right-hand side we obtain the desired result.
		\hfill	
	\end{proof}

For finite-parametric approximations it is 
necessary to replace the GPC coefficients $u_\bs$ 
by corresponding finite-parametric surrogates. 
For parametric PDEs such as \eqref{ellip}, such 
approximations are furnished by discretizations
which realize the projectors $P_m$ in Assumption~\ref{assum2}, item (iii).

In fully discrete approximations of the solution $u(\by)$ to the parametric 
PDE  \eqref{ellip} by using interpolation with respect to the parametric variables with  a 
large, finite number of  particular values $u(\by_j)$, 
stable numerical approximations depend on discretization of $u(\by_j)$. 
For \emph{uniformly (w.r. to the parameter $\by$) stable PDEs and corresponding uniformly
stable discretizations}
the discretization error is known to be quasi-optimal, uniformly with respect to $\by$; 
convergence rate bounds can in this case be obtained
by replacing the 
discretization error $u(\by_j) - P_m(\by_j) u(\by_j)$ 
by
$u(\by_j) - P_m u(\by_j)$ 
with a suitable (quasi-)interpolant $P_m$ 
as stipulated, e.g.,  in Assumption~\ref{assum2}, item (iii).

\begin{assumption}\label{assum4}
There are
	\begin{itemize}
		\item[{\rm (i)}] 
		 a sequence $(V_m)_{m \in \NN_0}$ of subspaces $V_m \subset V $ of dimension $\le m$, 
		and 		
		\item[{\rm (ii) }]  
		a sequence $(P_m)_{m \in \NN_0}$ of linear operators from $V$ into $V_m$, 
		and a number $\alpha>0$ such that 
		there are stability and consistency constants $C_1,C_2>0$ with
		\begin{equation} \label{spatialappnJ}
			\|P_m(w)\|_V \leq 
			C_1 \|w\|_V , \quad
			\|w-P_m(w)\|_V \leq 
			C_2 m^{-\alpha} \|w\|_{W^r}, \quad \forall m \in \NN_0, \ \  \forall w \in W^r.
		\end{equation}
	\end{itemize}
\end{assumption}

To treat fully discrete approximations,
we assume that $f \in H^{r-2}(D)$ in the equation \eqref{ellip} 
and that it holds the  approximation property \eqref{spatialappnJ} 
in Assumption~\ref{assum4}
for all $w \in W^r$, see, for instance, \cite[Theorem 3.2.1]{Cia78} 
for the case when $D\subset \RR^2$ is a polygonal domain.
Notice  that classical error estimates yield
the convergence rate $\alpha=(r-1)/d$ by using Lagrange finite elements of order at least $r-1$
on quasi-uniform partitions with the finite element spaces $V_m$ associated to grids $(\Tt_m)_{m \in \NN}$ and finite element functions $P_m(w)$ (for $m = 1/h\in \NN$). 
Also, the spaces $W^r$ do not always coincide with $H^r(D)$. 
For example, for $d=2$, $W^r$ is strictly larger than $(H^r\cap H^1_0)(D)$
when $D$ is a polygon with re-entrant corner. 
In this case, it is well known that the optimal rate $\alpha=(r-1)/2$ is yet attained, 
by using the finite element grids $(\Tt_m)_{m\in \NN}$ 
with proper refinement near re-entrant corners where 
$w\in W^r$ might have singularities.

As before, for $w\in V$ and for each $k\in \NN_0$, we define 
\begin{align*}
\delta_0(w):= P_0(w), \;\;
	\delta_k(w)
	&
	:=
	P_{2^k}(w) -  P_{2^{k-1}}(w),
	\quad 
	k\in \NN.
\end{align*}

\begin{theorem}\label{thm[coll-approx]pdeJ}
	Let $0 < p \le 2$ and $r\in \NN$, $r>1$. 
	Let  Assumption~\ref{assum4} hold.  
        Let the assumptions of Lemma~\ref{lemma[summability]J} hold 
        for the spaces $W^1=V$ and  $W^r$ with $\rho_{1;j}$ strictly larger than 1 for all $j\in \NN$ 
        and $(\rho_{i;j}^{-1})_{j\in \NN}\in \ell_{q_i}(\NN)$ for $i=1,r$, 
        and $0 < q_1\leq q_r <\infty$ and $q_1 < 2$. 
For $\xi > 1$, 
let $G(\xi)$ be the thresholded multi-index set 
in Definition~\ref{def:G(xi)} 
for $\bsigma_1 $ and $\bsigma_r$ in place of $\bsigma_2$ 
as in \eqref{beta_r,s} and $q_2$ in place of $q_r$.
	
Then, for each $n\in \NN$ there exists a number $\xi_n$ such that $\dim \Vv(G(\xi_n)) \le  n$.
	Furthermore, 
	there exists a constant $C>0$ such that 
	for any $0< p \le 2$ and any $n\in \NN$,
	we have 
	for the sparse-grid tensor-product interpolation operator 
	$$
	\Ii_{G(\xi_n)}: C(\IIi, W^r) \to \Vv(G(\xi_n)),
	$$ 
	the error bound for the solution $u(\by)$ to the equation \eqref{ellip}
	\begin{align}
		\norm{u-\Ii_{G(\xi_n)}u}{\Ll_p( V)} 
		\le 
		C
		\begin{cases}
			n^{-\alpha} &\text{if } \alpha \leq  1/q_r - 1/2,
			\\
			n^{-\beta}(\log n)^\kappa &\text{if } \alpha >  1/q_r - 1/2,
		\end{cases} 
		\label{vSGxi3Rev2}
	\end{align}
	where $\alpha$ is the convergence rate given by~\eqref{SpatialAppr}, 
        $\beta$ and $\kappa$ by \eqref{beta} with $q_2$ being replaced by $q_r$.
\end{theorem}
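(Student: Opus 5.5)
The plan is to derive the statement directly from Theorem~\ref{thm: FullyPolAppr}, applied with $v:=u$, $X^1:=V=W^1$ and $X^2:=W^r$. The work therefore consists of verifying items (i)--(iv) of Assumption~\ref{assum2}. Once this is done, estimate \eqref{vSGxi3Rev2} is exactly \eqref{vSGxi3} with $q_2$ renamed $q_r$. The dimension bound $\dim\Vv(G(\xi_n))\le n$ is inherited from the same theorem.

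Items (i)--(iii) should be routine. For (i), $W^r\subset V$ by definition. The bound $\|w\|_V\le C\|w\|_{W^r}$ follows from the Poincaré/Lax--Milgram estimate $\|w\|_V\le C\|\Delta w\|_{H^{-1}}\le C\|\Delta w\|_{H^{r-2}}$ for $r\ge2$.

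For (ii), Lemma~\ref{lemma[summability]J} with index $r$ gives $\sum_\bs(\sigma_{r;\bs}\|u_\bs\|_{W^r})^2<\infty$. Hence $u_\bs\in W^r$ and $u\in\Ll_2(W^r)$ by Parseval, and the GPC representation \eqref{J-SeriesX} holds.

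For (iii), this is Assumption~\ref{assum4} verbatim, with $X^1=V$ and $X^2=W^r$.

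The substantive step is (iv). The first half, $\sum_\bs(\sigma_{i;\bs}\|u_\bs\|_{X^i})^2<\infty$ for $i=1,r$, is again Lemma~\ref{lemma[summability]J}, used once with $r=1$ and once with the given $r$.

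For the second half we need $\bigl(p_\bs(\theta,\lambda)^{2/q_i}\sigma_{i;\bs}^{-1}\bigr)_\bs\in\ell_{q_i}(\FF)$. This amounts to $\sum_\bs p_\bs(\theta,\lambda)^2\sigma_{i;\bs}^{-q_i}<\infty$. Since $p_\bs(\theta,\lambda)^2=p_\bs(2\theta,\lambda)$, this is Lemma~\ref{lemma:bcdmJ} with $\kappa=1$ (so $\FF_1=\FF$), $q=q_i$, and $\brho=\brho_i$.

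Two points need care in this application. First, Lemma~\ref{lemma:bcdmJ} requires $\rho_{i;j}>1$. This is assumed for $\rho_{1;j}$. For $\rho_{r;j}$, I would reduce to finitely many $j$ with $\rho_{r;j}\le1$, or simply replace $\rho_{r;j}$ by $\min(\rho_{r;j},\rho_{1;j})$-type modifications. I expect, however, that the intended reading is that it holds as for $i=1$.

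Second, Assumption~\ref{assum2} demands $\sigma_{i;\bs}>1$. We have $\sigma_{i;\bsnul}=1$ because $c_0^{a,b}=1$. I would handle this by multiplying both weight families by a fixed constant $c>1$. This changes neither summability condition except by constants, and it shifts the thresholds in $G(\xi)$ only by a constant factor, which is absorbed into $C$ and into the choice of $\xi_n$.

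The monotonicity of $\sigma_{2;\bs}$, used in Step~1 of the proof of Theorem~\ref{thm: FullyPolAppr} to make $\Lambda_k(\xi)$ downward closed, must also be checked. It follows because $\rho_{r;j}\ge1$ and $c_k^{a,b}$ is nondecreasing in $k$. If the latter fails for small $k$, I would replace $c_k^{a,b}$ by its running maximum, which is comparable up to constants.

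With all items verified, Theorem~\ref{thm: FullyPolAppr} applies and yields the claim for $0<p\le2$. The main obstacle is this weight bookkeeping (strict positivity above $1$, monotonicity, $\rho>1$); the rest is citation.
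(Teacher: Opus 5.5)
Your proposal follows the paper's proof: the authors also observe that $(\rho_{r;j}^{-1})_{j\in\NN}\in\ell_{q_r}(\NN)$ forces $\rho_{r;j}\to\infty$, so without loss of generality $\rho_{r;j}>1$ for all $j$. (One enlarges the finitely many offending $\rho_{r;j}$, which keeps \eqref{eq-rho-r} finite; a $\min$-type modification as you suggest would not help.) They then obtain Assumption~\ref{assum2} for $X^1=V$, $X^2=W^r$ from Lemma~\ref{lemma[summability]J} and Lemma~\ref{lemma:bcdmJ} with $\kappa=1$, and apply Theorem~\ref{thm: FullyPolAppr}. Your extra weight bookkeeping goes beyond what the paper checks, but $c_k^{a,b}$ need be neither nondecreasing nor $\ge 1$ (e.g.\ $c_1^{3,3}=3/4<c_0^{3,3}=1$), and a running-maximum replacement costs a factor of the form $K^{|\bs|_0}$, so it must be absorbed by replacing $\rho_j$ with $\rho_j/K$ for all but finitely many $j$ (using $\rho_j\to\infty$) rather than into a single constant.
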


\begin{proof}
	First note that by the condition $(\rho_{r;j}^{-1})_{j\in \NN}\in \ell_{q_r}(\NN)$ we have $\rho_{r;j}\to \infty$ as $j\to \infty$. Therefore with out loss of generality we can assume that the sequence $\brho_r$ in \eqref{eq-rho-r} satisfies $\rho_{r;j}>1$ for all $j\in \NN$. 
Consequently, by Lemmas \ref{lemma[summability]J} and \ref{lemma:bcdmJ},
we conclude that Assumption~\ref{assum2} 
holds with $X^1=V$ and $X^2=W^r$ 
for the solution $u$ of the equation \eqref{ellip} 
with $q_2$ being replaced by $q_r$ and any $\tau,\lambda> 0$. 
Now by applying Theorem \ref{thm: FullyPolAppr} the result follows. 
\hfill
\end{proof}

\begin{definition} \label{def:tG_rev(xi)}
For given $\brac{\sigma_{i;\bs}}$, $i=1,2$, and threshold parameter $\xi>1$, 
we define the threshold even multi-index set $\tilde{G}_\rev(\xi)$  
by 
\begin{equation*} 
	\tilde{G}_\rev(\xi)
	:= \ 
	\begin{cases}
		\big\{(k,\bs) \in \NN_0 \times\FF_\rev: \, 2^k \sigma_{2;\bs}^{q_2/2} \leq \xi\big\}  &\text{if }  \ \alpha \le 2/q_2 - 1/2,
                 \\
		\big\{(k,\bs) \in \NN_0 \times\FF_\rev: \, \sigma_{1;\bs}^{q_1/2} \le \xi {(\log \xi)^\eta}, \  
		2^{\tau k} \sigma_{2;\bs}\leq \xi^\vartheta \big\}    & \text{if }  \ \alpha > 2/q_2 - 1/2,
	\end{cases}
\end{equation*}
where 
\begin{equation*} \label{tau,vartheta}
	\tau:= \frac{4\alpha}{4 - q_2}, \qquad		
	\vartheta:= \frac{4}{4 - q_2} \left(\frac{2}{q_1} - \frac{1}{2}\right),\  \  \  \eta:=\bigg(\frac{2}{q_1}-\frac{1}{2}\bigg)^{-1}.
\end{equation*}	
\end{definition}

\begin{theorem}\label{thm[quadrature]pdeJ}
Let Assumption~\ref{assum4} hold and  $r\in \NN, \ r>1$.  
Let  $a_j=b_j$, $j \in \NN$, for the Jacobi probability measure $\mu_{\ba,\bb}$, 
and the assumptions of Lemma~\ref{lemma[summability]J} hold  for the spaces $W^1=V$ and  $W^r$
with $\rho_{1;j}$ strictly larger than $1$ for all $j\in \NN$ and 
$(\rho_{i;j}^{-1})_{j\in \NN}\in \ell_{q_i}(\NN)$ for $i=1,r$, 
and $0 < q_1\leq q_r <\infty$ and  $q_1 < 4$. 
For $\xi > 1$, 
let $\tilde{G}_\rev(\xi)$ be the threshold index set 
in Definition~\ref{def:tG_rev(xi)} for $\bsigma_1$ 
and 
$\bsigma_2$  with $\bsigma_2$ being replaced by $\bsigma_r$ as in \eqref{beta_r,s} 
and 
$q_2$ by $q_r$.

Then, for the quadrature operator $\Qq_{\tilde{G}_\rev(\xi)}$ 
generated by the interpolation operator 
$\Ii^*_{\tilde{G}_\rev(\xi)}: C(\IIi, W^r)\to \Vv(\tilde{G}_\rev(\xi))$, 
we have the following for the solution $u(\by)$ to the equation \eqref{ellip}.
\begin{itemize}
	\item[{\rm (i)}]
	There exists a constant $C>0$ such that 
	for any $n\in \NN$ there exists a number $\xi_n$ such that
	$\dim\Vv(\tilde{G}_\rev(\xi_n))\le n$ and 
	\begin{equation} \label{u-Q_Gu-quadratureJ}
		\left\|\int_{\IIi}u(\by)\, \rd \mu(\by) - \Qq_{\tilde{G}_\rev(\xi_n)}u\right\|_V \leq 	C
		\begin{cases}
			n^{-\alpha} &\text{if } \alpha \leq  2/{q_r} - 1/2,
			\\
			n^{-\beta}(\log n)^\kappa &\text{if } \alpha >  2/{q_r} - 1/2.
		\end{cases} 
	\end{equation}			
	\item[{\rm (ii)}] 
	Let $\phi \in V^* $. 
	There exists a constant $C>0$ such that 
	for any $n\in \NN$ there exists a number $\xi_n$ such that $\dim\Vv(\tilde{G}_\rev(\xi_n))\le n$ and 
	\begin{equation*} \label{u-Q_Gu_phiJ}
		\left|\int_{\IIi} \langle \phi,  u(\by) \rangle \rd \mu(\by) - \Qq_{\tilde{G}_\rev(\xi_n)} \langle \phi,  u \rangle\right| \leq C \|\phi\|_{V^*}
		\begin{cases}
			n^{-\alpha} &\text{if } \alpha \leq  2/{q_r} - 1/2,
			\\
			n^{-\beta}(\log n)^\kappa &\text{if } \alpha >  2/{q_r} - 1/2.
		\end{cases}
	\end{equation*}
\end{itemize}	
	The rate $\alpha$ is given by~\eqref{SpatialAppr} and $\beta >0$ is given by
	\begin{equation} \label{eq:beta2}
		\beta := \left(\frac 2 {q_1} - \frac{1}{2}\right)\frac{\alpha}{\alpha + \delta}, \ 
                         \kappa=\frac{\alpha+1/2-2/q_r}{\alpha+2/q_1-2/q_r} \quad 
                \mbox{with}\quad 
		\delta := \frac{2}{q_1}-\frac{2}{{q_r}}.
	\end{equation}
\end{theorem}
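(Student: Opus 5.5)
The approach is to reduce the statement to Theorem~\ref{thm[quadrature]} (equivalently Corollary~\ref{corollary: FullyPolApprEv} applied in $\Ll_1(V)$). The key is to restrict the GPC expansion to the even part and re-index the summability exponents by halving them. By the symmetry \eqref{eq:JacSym} and the vanishing of $\Delta^{\rm Q}_{\bs'}J_\bs$ for $\bs\notin\FF_\rev$, we have
\[
\int_{\IIi}u\,\rd\mu=\int_{\IIi}u_\rev\,\rd\mu, \qquad \Qq_{G}u=\Qq_{G}u_\rev ,
\]
where $u_\rev:=\sum_{\bs\in\FF_\rev}u_\bs J_\bs$. The second identity uses Lemma~\ref{l:I lambda xi} to exchange $\Qq_G$ with the absolutely convergent series. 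It therefore suffices to bound the quadrature error for $u_\rev$, which has an even Jacobi GPC expansion as in \eqref{JacobiSeriesVev}.

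Next I verify Assumption~\ref{assum2} with $\FF$ replaced by $\FF_\rev$, $X^1=V$, $X^2=W^r$, using the weights
\[
\tilde\sigma_{i;\bs}:=\sigma_{i;\bs}^{1/2} \quad\text{or, more precisely, exponents } \tilde q_i:=q_i/2 .
\]
On $\FF_\rev\subset\FF_2$, Lemma~\ref{lemma:bcdmJ} with $\kappa=2$ gives $\sum_{\bs\in\FF_2}p_\bs(\theta,\lambda)\sigma_{i;\bs}^{-q_i/2}<\infty$ for arbitrary $\theta,\lambda\ge0$. This is exactly the $\ell_{\tilde q_i}$ condition in \eqref{weighted summability} with $\tilde q_i=q_i/2$ and the same $\sigma_{i;\bs}$; one absorbs $p_\bs^{2/\tilde q_i}$ by enlarging $\theta,\lambda$, which is allowed since they are arbitrary. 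The $\ell_2$-weighted bounds $\sum(\sigma_{i;\bs}\|u_\bs\|_{W^i})^2<\infty$ are given by Lemma~\ref{lemma[summability]J}, after replacing $\brho_r$ by a sequence exceeding $1$ as in the proof of Theorem~\ref{thm[coll-approx]pdeJ}. The hypothesis $q_1<4$ gives $\tilde q_1<2$ as required, and Assumption~\ref{assum4} supplies item (iii).

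With $\tilde q_i=q_i/2$, the quantities in Definition~\ref{def:G(xi)} become $1/\tilde q_2-1/2=2/q_2-1/2$ and $\tau=2\alpha/(2-\tilde q_2)=4\alpha/(4-q_2)$. Likewise $\vartheta$ and $\eta$ turn into those of Definition~\ref{def:tG_rev(xi)}, and the set $G_\rev(\xi)$ of Definition~\ref{def:G_rev(xi)} coincides with $\tilde G_\rev(\xi)$. Similarly $\beta,\kappa,\delta$ from \eqref{beta} become \eqref{eq:beta2}.

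Applying Theorem~\ref{thm[quadrature]}(i) to $u_\rev$ yields (i). For (ii), write
\[
\Bigl|\int\langle\phi,u\rangle\,\rd\mu-\Qq\langle\phi,u\rangle\Bigr| \le \|\phi\|_{V^*}\,\Bigl\|\int u\,\rd\mu-\Qq u\Bigr\|_V .
\]

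The main obstacle is the core of Theorem~\ref{thm[quadrature]}, namely the step
\[
\Bigl\|\int u\,\rd\mu-\Qq_G u\Bigr\|_V\le\|u_\rev-\Ii^*_G u_\rev\|_{\Ll_1(V)} .
\]
This requires the analogue of Step~2 in the proof of Theorem~\ref{thm: FullyPolAppr} for the even increments $\Delta^{{\rm I}*}$: the bound $\|I^*_{\Lambda\cap R_\bs}J_\bs\|_{L_\infty}\le p_\bs(\theta,\lambda)$ over downward closed sets in $\FF_\rev$, using \eqref{norm{Delta_m(v)}} for $\Delta^{{\rm I}*}$. I would also check that $I^*_\Lambda J_\bs=J_\bs$ for $\bs\in\Lambda$, since $I_m$ reproduces polynomials of degree at most $m$.

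Finally, one must note that downward closedness of $\Lambda_k(\xi)$ in $\FF_\rev$ needs monotonicity of $\sigma_{i;\bs}$. This holds for $\rho_{i;j}>1$ provided $c^{a,a}_k$ is increasing in $k$, which I would verify or else enforce by replacing $\sigma$ with its increasing minorant.
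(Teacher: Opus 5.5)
Your reduction is the paper's. On $\FF_\rev\subset\FF_2$, Lemma~\ref{lemma[summability]J} and Lemma~\ref{lemma:bcdmJ} with $\kappa=2$ give Assumption~\ref{assum2} for $X^1=V$, $X^2=W^r$ with exponents $q_1/2\le q_r/2$ and $q_1/2<2$. The halved exponents turn Definition~\ref{def:G_rev(xi)} and \eqref{beta} into Definition~\ref{def:tG_rev(xi)} and \eqref{eq:beta2}, and your bookkeeping here is correct. Theorem~\ref{thm[quadrature]} is then invoked, and (ii) follows from (i). Your remark that downward-closedness in Step~1 of Theorem~\ref{thm: FullyPolAppr} relies on monotone weights, which Assumption~\ref{assum2} does not state, is fair. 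The ``main obstacle'' paragraph is unnecessary: it is the content of Corollary~\ref{corollary: FullyPolApprEv} and Theorem~\ref{thm[quadrature]}, which the paper simply cites.

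One step is justified incorrectly. You obtain $\Qq_G u=\Qq_G u_\rev$ by interchanging $\Qq_G$ with the full GPC series of $u$, citing Lemma~\ref{l:I lambda xi}. That lemma requires weighted summability over all of $\FF$ with some exponent $q<2$. On $\FF$ the hypotheses (via Lemma~\ref{lemma:bcdmJ} with $\kappa=1$) only give exponent $q_1$, and the theorem allows $2\le q_1<4$. That is precisely the range that is new compared with Theorem~\ref{thm[coll-approx]pdeJ}. Only the even series is covered: Lemma~\ref{l:I lambda xi} applies on $\FF_\rev$ with exponent $q_1/2<2$ and gives $u_\rev\in C(\IIi,V)$.

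The identity is nevertheless true, but it should come from the symmetry of the rule, not from convergence of the odd part.
\begin{itemize}
\item Since $y_{m,m-k}=-y_{m,k}$ and $\mu_{a_j,a_j}$ is symmetric, $\omega_{m,m-k}=\omega_{m,k}$. So every $Q_m$ (including $Q_0$, which is evaluation at $0$) is invariant under $y_j\mapsto -y_j$. Hence every $\Delta^{\rm Q}_\bs$, and therefore $\Qq_G$, is invariant under finitely many sign changes of coordinates.
\item Average $u$ over the sign changes of $y_1,\dots,y_N$ and let $N\to\infty$. The limit is uniform by uniform continuity of $u$ on the compact $\IIi$, and it gives a continuous $\bar u$ with $\Qq_G\bar u=\Qq_G u$ and $\int\bar u\,\rd\mu=\int u\,\rd\mu$.
\item The Jacobi coefficients of $\bar u$ are $u_\bs$ for $\bs\in\FF_\rev$ and $0$ otherwise. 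Thus $\bar u=u_\rev$ $\mu$-a.e., and hence everywhere, because both are continuous and $\mu$ has full support.
\end{itemize}
With this replacement the rest of your argument goes through unchanged.
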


\begin{proof}
Observe that $\FF_\rev \subset \FF_2$.
From Lemma~\ref {lemma[summability]J} and Lemma~\ref{lemma:bcdmJ},
the assumptions of Theorem~\ref{thm[quadrature]}  hold for $X^1=V$
and $X^2 = W^r$ with $0 < q_1/2 \le q_r/2 < \infty$ and $q_1/2 < 2$. 
Theorem~\ref{thm[quadrature]pdeJ} follows by applying Theorem~\ref{thm[quadrature]}, 
with $q_1/2$ in place of $q_1$ and $q_r/2$ in place of $q_2$.
\hfill
\end{proof}
%
%
\subsection{Holomorphic maps with affine-parametric encoding}
\label{sec:IOPAff}
We consider abstract, 
real analytic maps $\frku$ between Hilbert spaces $Z$ and $X$.
We identify the real Hilbert spaces $X,Z$ with their complexification
$X_\CC, Z_\CC$,
without change in notation. 
Real analytic maps $\frku:Z\to X$ admit
unique holomorphic extensions, again denoted by $\frku$,
to the complexifications $X_\CC, Z_\CC$ by a power series argument.

Assume given a sequence $(\psi_j)_{j\in \NN}\subset Z$ 
such that 
$( \| \psi_j \|_Z)_{j \in \NN} \in \ell_1(\NN)$.
Set
\begin{equation}\label{eq:u-sigma}
	\sigma(\by) := \sum_{j\in \NN} y_j\psi_j 
	\quad \mbox{and} \quad 
	u(\by) := \frku(\sigma(\by)) \;,
	\quad 
	\by\in \IIi
	\;.
\end{equation}
We shall work under
\begin{assumption}\label{ass:Hol}
	$X,Z$ are complex Hilbert spaces. 
	The sequence $(\psi_j)_{j\in \NN}\subset Z$,
	and there are real numbers $p\in (0,1]$, $r>0$ and $M>0$ 
	such that, with $B^Z_r(\phi)\subset Z_\CC$ 
	denoting the open ball of radius $r$ centered at $\phi\in Z$,
	\begin{enumerate}
		\item[{\rm (i)}]
		$\bb = (\| \psi_j \|_Z)_{j \in \NN} \in \ell_p(\NN)$,
		\item[{\rm (ii)}] 
		with $\sigma(\by) = \sum_{j\in \NN} y_j\psi_j \in Z$, 
		and with the sets 
		$$
		K := \{ \sigma(\by): \by \in \IIi \} \;\;\mbox{and} \;\;
		S_K := \bigcup_{\phi \in K} B^Z_r(\phi) \subseteq Z
		$$
		it holds 
		$\frku \in \Hol(S_K;X)$,
		\item[{\rm (iii)}] 
		$\sup_{z\in S_K} \| \frku(z) \|_X = M < \infty$.
		\item[{\rm (iv)}]
		The function $u:\IIi \to X$ is given in terms of $\frku$ as in \eqref{eq:u-sigma}.
	\end{enumerate}
\end{assumption}
Jacobi series approximation convergence rates 
of holomorphic in $\II$ functions $u$ 
are well known to be related to 
the classical \emph{Bernstein ellipse} $\calE_\rho\subset \CC$ 
with foci at $z = \pm 1$ and semiaxis-sum $\rho > 1$.
For a sequence 
$\brho =  (\rho_j)_{j \in \NN} \in (1,\infty)^\NN$
of semiaxis-sums, define 
$\calE_\brho := \calE_{\rho_1} \times \calE_{\rho_2} \times \ldots \subset \CC^\NN$.
We collect some elementary properties of maps 
$\by \mapsto u(\by)$ obtained from a holomorphic $\frku:Z\to X$ 
in Assumption~\ref{ass:Hol}, parameterized as in \eqref{eq:u-sigma}.
\begin{proposition}\label{prop:u(y)}
	Let $u:\IIi\to X$ be as defined in \eqref{eq:u-sigma} with $\frku$ satisfying
	Assumption~\ref{ass:Hol}. 
	Set $\bb = (b_j)_{j\in \NN}$ with $b_j := \| \psi_j \|_Z$.
	Then there holds
	\begin{enumerate}
		\item[{\rm (i)}]
		$u:\IIi \to X$ is continuous,
		\item[{\rm (ii)}]
		for every sequence $\brho \subset (1,\infty)$ which is $(\bb,r)$-admissible, i.e.,
		\begin{equation*}\label{eq:bradm}
			\sum_{j\in \NN} b_j(\rho_j-1) \leq r \;,
		\end{equation*}
		$\by\mapsto u(\by)$ allows a separately holomorphic extension to $\calE_\brho$ 
		(denoted also with $u$) ,
		\item[{\rm (iii)}] 
		with
		$$
		S_{\bb,r} 
		:= 
		\bigcup_{\{ \brho : \brho \mbox{ is } (\bb,r)-\mbox{admissible} \} } \calE_\brho
		\subset 
		Z
		$$
		the extension $u:S_{\bb,r}\to X$ is well-defined and 
		\begin{equation*}\label{Mbound}
			\sup_{z\in S_{\bb,r}} \| u(z) \|_X \leq M <\infty\;.
		\end{equation*}
	\end{enumerate}
\end{proposition}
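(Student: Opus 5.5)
The approach is to reduce all three items to the composition $u=\frku\circ\sigma$. We control the affine map $\sigma$, extended to complex parameters, and check that it lands in the domain $S_K$ where $\frku$ is holomorphic and bounded by $M$. The basic observation is that for $\bz\in\CC^\NN$ with $\sup_j|z_j|<\infty$ the series $\sigma(\bz)=\sum_j z_j\psi_j$ converges absolutely in $Z_\CC$, because $\bb\in\ell_p(\NN)\subset\ell_1(\NN)$ for $p\le 1$.

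For (i), take $\by,\by'\in\IIi$ and split the series at an index $J$:
\[
\|\sigma(\by)-\sigma(\by')\|_Z\le \sum_{j\le J}|y_j-y_j'|\,b_j+2\sum_{j>J}b_j .
\]
Given $\varepsilon>0$, first choose $J$ so that the tail $2\sum_{j>J}b_j<\varepsilon/2$. The finite part is then small for $\by'$ in a product-topology neighbourhood of $\by$. Hence $\sigma:\IIi\to Z$ is continuous, with $\sigma(\IIi)=K$. Since $\frku$ is holomorphic on the open set $S_K\supset K$, it is continuous there, and so $u=\frku\circ\sigma$ is continuous.

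For (ii) and (iii), fix a $(\bb,r)$-admissible $\brho$ and take $\bz\in\calE_\brho$. Each $z_j$ lies in the Bernstein ellipse $\calE_{\rho_j}$. The key elementary fact is $\operatorname{dist}(z_j,\II)\le \rho_j-1$. Indeed, the real semiaxis is $(\rho_j+\rho_j^{-1})/2\le\rho_j$, so $\operatorname{dist}\le (\rho_j+\rho_j^{-1})/2-1=(\rho_j-1)^2/(2\rho_j)\le\rho_j-1$, and the imaginary semiaxis is $(\rho_j-\rho_j^{-1})/2\le\rho_j-1$. We therefore choose $y_j\in\II$ with $|z_j-y_j|\le\rho_j-1$ and set $\by=(y_j)$. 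Then $\sigma(\by)\in K$ and
\[
\|\sigma(\bz)-\sigma(\by)\|_Z\le\sum_j b_j(\rho_j-1)\le r .
\]
The main obstacle is that $B^Z_r(\phi)$ is an open ball, so the admissibility condition with "$\le r$" only gives the closed ball. I would handle this in one of two ways. The first is to replace $\calE_{\rho_j}$ by the open ellipse, where the distance inequality is strict for at least one $j$ with $b_j>0$. The second is to note that $\frku$ is in fact holomorphic on a slightly larger neighbourhood, and read the bound by continuity. In the paper's convention, the ellipse being open handles it directly. Either way we get $\sigma(\bz)\in S_K$.

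It follows that $u(\bz):=\frku(\sigma(\bz))$ is well defined on $S_{\bb,r}$ and satisfies $\|u(\bz)\|_X\le M$ by Assumption~\ref{ass:Hol}(iii); this proves (iii). For separate holomorphy, fix all coordinates except $z_j$. The map $z_j\mapsto\sigma(\bz)$ is affine in $z_j$, with values in $Z_\CC$, and holomorphic. Composing with the holomorphic $\frku$ gives holomorphy in $z_j$ on $\calE_{\rho_j}$, which is (ii). Joint holomorphy on finite sections follows in the same way, since $\sigma$ restricted to finitely many variables is an affine map from $\CC^J$ into $Z_\CC$.
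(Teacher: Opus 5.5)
Your proof is correct. The paper gives no argument of its own (it only refers to Zech's thesis, Lemma 2.2.7), and your route is the standard one: write $u=\frku\circ\sigma$, pick for $\boldsymbol{z}\in\calE_{\brho}$ a point $\by\in\IIi$ with $\|\sigma(\boldsymbol{z})-\sigma(\by)\|_Z\le\sum_j b_j|z_j-y_j|<r$, so that $\sigma(\boldsymbol{z})\in S_K$, and compose with $\frku$.

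There are three points to tighten.

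\emph{The distance bound.} Your justification of $\operatorname{dist}(z,\II)\le\rho-1$ only looks at points on the two axes. A complete argument: write $z=(\zeta+\zeta^{-1})/2$ with $\zeta=te^{i\phi}$, $1\le t\le\rho$, and take $y=\cos\phi$. This gives $|z-y|\le(t-t^{-1})/2\le(\rho-\rho^{-1})/2<\rho-1$ for every $z$ in the ellipse.

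\emph{The open-ball issue.} That strict inequality gives $\sum_j b_j|z_j-y_j|<r$ unless all $b_j=0$, whether $\calE_\rho$ is taken open or closed. So the issue resolves on its own, and you should drop the alternative of enlarging the holomorphy domain of $\frku$, which the assumptions do not justify.

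\emph{Convergence of $\sigma(\boldsymbol{z})$.} An admissible $\brho$ can be unbounded when $b_j\to0$ fast. Convergence of $\sigma(\boldsymbol{z})$ should therefore be read off from $\sum_j b_j|z_j-y_j|<\infty$, not from $\sup_j|z_j|<\infty$.
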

For a proof, we refer to e.g. \cite[Lemma 2.2.7]{Zec18T}.
There holds the following summability of the 
Jacobi gpc coefficients $u_\bnu$ in the gpc series \eqref{J-series}
of $\by\mapsto u(\by)$ in \eqref{eq:u-sigma}.
\begin{theorem}\label{thm:JacBd}
	Consider the parametric function $u:\IIi\to X: \by\mapsto u(\by)$ 
	obtained from a holomorphic map $\frku$ with the input-encoding \eqref{eq:u-sigma},
	so that Assumption~\ref{ass:Hol} holds for the resulting parametric map $\by\mapsto u(\by)$.
	Let $r>0$, $p>0$ and $\bb \in (0,1]^\infty \cap \ell_p(\NN)$.
	
	Then, 
	with the weight $p_\bnu := p_\bnu(\theta,\lambda)$ 
	for $\bnu\in \FF$, with $p_\bnu(\theta,\lambda)$ as defined in \eqref{[p_s]} 
	for arbitrary given $\theta, \lambda > 0$, 
	there exists $C>0$ and, for each $\kappa \in \NN$, 
	a monotonically decreasing sequence 
	$(a_{\bnu})_{\bnu\in \FF} \in (0,\infty)^\infty$ 
	such that 
	\begin{enumerate}
		\item[{\rm (i)}] 
		$(a_{\bnu})_{\bnu\in \FF_\kappa} \in \ell_{p/\kappa}(\FF_\kappa)$,
		\item[{\rm (ii)}] 
		for $u:\IIi \to X$ related to $\frku$ as in Assumption~\ref{ass:Hol}, 
		for $M,r>0$ as above, and 
		for $(\psi_j)_{j\in \NN} \subset Z$ with $\| \psi_j \|_Z \leq b_j$ for all $j\in \NN$, 
		there exists $C>0$ such that 
		the Jacobi coefficients $(u_\bnu)_{\bnu\in \FF}$ of $u$ satisfy
		\begin{equation}\label{eq:JacuBd}
			\forall \bnu\in \FF_\kappa: \quad 
			{p_\bnu} \| u_\bnu \|_X \leq C M a_{\bnu}.
		\end{equation}
		In particular, for every $\kappa \in \NN$ holds
		$$
		(p_\bnu \|  u_\bnu \|_X )_{\bnu\in \FF_\kappa}\in \ell_{p/\kappa}(\FF_\kappa).
		$$
	\end{enumerate}
\end{theorem}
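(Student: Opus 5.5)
We will follow the standard route of \cite{CCS15,Zec18T}: Cauchy integral estimates of the Jacobi coefficients over polyellipses, with a $\bnu$-dependent choice of admissible radii. The summability then follows from a product-type estimate.

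\emph{Step 1 (coefficient bound for fixed radii).} Fix $\bnu\in\FF$ and a $(\bb,r)$-admissible $\brho$; by Proposition~\ref{prop:u(y)}, $u$ extends holomorphically to a neighbourhood of $\calE_\brho$, with bound $M$ there. We expand each univariate Jacobi polynomial in Chebyshev polynomials, or use the classical Bernstein-type estimate for Jacobi coefficients of functions holomorphic in $\calE_{\rho_j}$. This gives
\[
\|u_\bnu\|_X \le M\prod_{j\in\supp\bnu} C_{a_j,b_j}(1+\nu_j)^{\theta_1}\rho_j^{-\nu_j},
\]
with $\theta_1$ and the constants uniform in $j$ since $\ba,\bb\in\ell_\infty$. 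The argument iterates the one-dimensional Cauchy formula in each active variable, justified by separate holomorphy and dominated convergence. Absorbing the polynomial factors, together with $p_\bnu(\theta,\lambda)$, into a single $p_\bnu(\theta',\lambda')$, we obtain $p_\bnu\|u_\bnu\|_X\le CM\,p_\bnu(\theta',\lambda')\inf_{\brho}\brho^{-\bnu}$.

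\emph{Step 2 (choice of $\brho$ depending on $\bnu$).} As in \cite{CCS15}, we split $\NN=E\cup F$, where $E=\{1,\dots,J\}$ is finite and $\sum_{j\in F}b_j\le r/4$. For $j\in E$ we take $\rho_j=\kappa_0>1$ fixed, with $(\kappa_0-1)\sum_{j\le J}b_j\le r/2$. For $j\in F$ we take
\[
\rho_j=1+\frac{r\,\nu_j}{4\,|\bnu_F|\,b_j},
\]
which is admissible. Setting $a_\bnu:=p_\bnu(\theta',\lambda')\,\kappa_0^{-|\bnu_E|_1}\prod_{j\in F}\rho_j^{-\nu_j}$ gives \eqref{eq:JacuBd}. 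We then replace $a_\bnu$ by its monotone majorant $\sup_{\bnu'\ge\bnu}a_{\bnu'}$. As in \cite{Zec18T}, this preserves $\ell_{p/\kappa}$-summability up to constants, since the bounds are of product type up to the multinomial factor.

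\emph{Step 3 (summability on $\FF_\kappa$), which I expect to be the main obstacle.} The $F$-part produces the factor
\[
\frac{|\bnu_F|!}{\bnu_F!}\,\bnu_F^{-\bnu_F}\ \text{-type terms}\quad\text{i.e.}\quad \Big(\frac{|\bnu_F|^{|\bnu_F|}}{\bnu_F^{\bnu_F}}\Big)\Big(\tfrac{4\bb}{r}\Big)^{\bnu_F},
\]
which by Stirling is bounded by $\frac{|\bnu_F|!}{\bnu_F!}\,\tilde\bb^{\bnu_F}$ times a product of polynomial factors. We then use the known fact, cf.\ \cite{CDS10} and \cite[Lemma 6.2]{Dung21}, that the sequence
\[
\Big(\tfrac{|\bnu|!}{\bnu!}\tilde\bb^{\bnu}\,p_\bnu(\theta'',\lambda'')\Big)_{\bnu\in\FF_\kappa}
\]
lies in $\ell_{p/\kappa}$ whenever $\tilde\bb\in\ell_p$ with $\|\tilde\bb\|_{\ell_1}<1$. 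The restriction to $\FF_\kappa$ is what improves $p$ to $p/\kappa$: every nonzero $\nu_j\ge\kappa$ contributes at least $b_j^{\kappa}$, and $(b_j^\kappa)\in\ell_{p/\kappa}$. The $E$-part is a finite-dimensional geometric series and is summable for any exponent.

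The delicate points are keeping all constants uniform and ensuring that the polynomial weights do not destroy summability. Both are handled by shrinking the parameters: choose $r$ smaller and $\kappa_0$ slightly smaller, so that the extra factors $(1+\nu_j)^{\theta''}$ are dominated by a geometric margin.
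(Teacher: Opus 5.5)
Your route is the one behind \cite[Thm.~2.2.10]{Zec18T}, which the paper cites instead of giving a proof: Cauchy/Chebyshev estimates on $(\bb,r)$-admissible polyellipses with $\bnu$-dependent radii, followed by a combinatorial summability argument. The gap is Step~3, which carries the entire improvement from $p$ to $p/\kappa$.

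The ``known fact'' you invoke is not supplied by the sources you name. \cite{CDS10} treats $\bigl(\frac{|\bnu|!}{\bnu!}\bb^{\bnu}\bigr)$ on all of $\FF$ in $\ell_p$. Lemma~6.2 of \cite{Dung21}, as it is used in Lemma~\ref{lemma:bcdmJ}, concerns product sequences $p_{\bnu}\brho^{-\bnu}$. Your heuristic that each nonzero $\nu_j\ge\kappa$ ``contributes at least $b_j^\kappa$'' is exactly the product-type argument, and it says nothing about the multinomial factor, which is not of product type.

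That factor does not reduce naively. Take $\bnu=(2\kappa-1)\mathbf{1}_S$ with $|S|=n$, and $\boldsymbol{\mu}=\lfloor\bnu/\kappa\rfloor=\mathbf{1}_S$. Then $\frac{|\bnu|!}{\bnu!}\big/\bigl(\frac{|\boldsymbol{\mu}|!}{\boldsymbol{\mu}!}\bigr)^{\kappa}$ grows like $C^n n^{(\kappa-1)n}$. This surplus can only be paid for by the leftover powers $\tilde{b}_j^{\nu_j-\kappa}$ together with the decay of $\tilde{\bb}$, e.g.\ $\prod_{j\in S}\tilde{b}_j\le\|\tilde{\bb}\|_{\ell_p}^n(n!)^{-1/p}$ with $p\le 1$. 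Your sketch gives no mechanism for this. What is missing is an actual lemma: $\ell_{p/\kappa}(\FF_\kappa)$-summability of $\frac{|\bnu|!}{\bnu!}\tilde{\bb}^{\bnu}p_{\bnu}$. One way to prove it is to write $\bnu=\kappa\boldsymbol{\mu}+\boldsymbol{s}$ and control the remainder factors through the decreasing rearrangement of $\tilde{\bb}$.

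The parameter bookkeeping also needs repair.
\begin{itemize}
\item \emph{Smallness.} With $\sum_{j\in F}b_j\le r/4$, Stirling ($n^n/n!\le e^n$ and $k^k/k!\ge e^{k}/(e\sqrt{k})$) gives $\tilde{\bb}=4\bb/r$. This yields at best $\|\tilde{\bb}_F\|_{\ell_1}\le 1$, not the strict inequality your lemma needs. Absorbing the weights, e.g.\ via $p_{\bnu}(\theta,\lambda)\le\gamma^{|\bnu|}$ with $\gamma=(1+\lambda)^\theta$, needs $\gamma\|\tilde{\bb}_F\|_{\ell_1}<1$.
\item \emph{Direction of the fix.} Shrinking $r$ goes the wrong way: it enlarges $\tilde{\bb}$ and shrinks the admissible radii. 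Shrinking $\kappa_0$ only weakens the decay on $E$. The correct lever is $J$: enlarge $E$ until $\sum_{j>J}b_j$ is as small as required, which $\bb\in\ell_1$ permits.
\item \emph{Monotone majorant.} This step is asserted, not proved, and your $a_{\bnu}$ is not monotone. Appending a new index $k$ to $\bnu_F$ multiplies your bound by a factor of order $(|\bnu_F|+1)\tilde{b}_k$, and $p_{\bnu}$ increases as well. A short argument does work. Take the supremum only over $\bnu'\in\FF_\kappa$ with $\bnu'\ge\bnu$; the result is still decreasing on all of $\FF$. Then $\sup\le\sum$ gives $\sum_{\bnu\in\FF_\kappa}\hat{a}_{\bnu}^{p/\kappa}\le\sum_{\bnu'\in\FF_\kappa}a_{\bnu'}^{p/\kappa}\prod_j(1+\nu'_j)$, and the extra product is one more polynomial weight of the same kind.
\item \emph{Step~1 (minor).} Expanding $u$ (not $J_k$) in Chebyshev polynomials produces a factor $2\rho_j/(\rho_j-1)$ per active coordinate, not a $\brho$-independent constant. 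Since your $\rho_j$, $j\in F$, can be arbitrarily close to $1$, this must be addressed. With your radii it is harmless, because $\frac{\rho}{\rho-1}\rho^{-\nu}\le(\rho-1)^{-\nu}$ for $\nu\ge 1$, which is exactly the factor you bound anyway.
\end{itemize}
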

This result is contained in \cite[Thm.~2.2.10]{Zec18T}, where a complete proof is 
available. 
As in earlier works \cite{CDS11,BCM17,CCS15,ADMHolo},
it uses complex-variable methods to obtain precise bounds on the Jacobi-coefficients $u_\bnu$ 
expressed via the Cauchy-Integral Theorem on suitable contours in poly-ellipses $\calE_\brho$
with $(\bb,r)$-admissible semi-axis sums $\brho$ as in Proposition~\ref{prop:u(y)}. 

We relate Theorem~\ref{thm:JacBd} to the abstract sparse-grid interpolation and quadrature 
results, Corollary~\ref{corollary: FullyPolApprEv} and Theorem~\ref{thm[quadrature]}.
To this end, we verify that the parametric holomorphy in Proposition~\ref{prop:u(y)} 
implies the double-weighted summability Assumption~\ref{assum2}.
\begin{corollary}
	\label{cor:doubSum}
	Under the assumption and notation of Theorem \ref{thm:JacBd},
	let  $0< p/\kappa <2$, \ $\theta, \lambda >0$, and define the sequence
	$\bsigma:=(\sigma_{\bs})_{\bs \in \FF}$ by
	\begin{equation*} 
		\sigma_{\bs}:= 	a_{\bs}^{p/2\kappa - 1} p_\bnu(\theta,\lambda),  \ \bs \in \FF.
	\end{equation*}	
	Then there exists a constant $M_\kappa>0$ such that
	\begin{equation} \label{ell_2-summability}
		\left(\sum_{\bs\in\FF_\kappa} (\sigma_{\bs} \|u_\bs\|_X)^2\right)^{1/2} \ \le M_\kappa^{1/2} \ <\infty 
		\ \ \text{and} \ \
		\norm{\bp(\theta,\lambda) \bsigma^{-1}}{\ell_{q/\kappa}(\FF_\kappa)} \le M_\kappa^{\kappa/q} < \infty,
	\end{equation}	
	where 
	$q_\kappa := \frac{2p}{2-p/\kappa}$ 
	and 
	$M_\kappa:= \norm{\ba}{\ell_{p/\kappa}(\FF_\kappa)}^{p/\kappa}$.
\end{corollary}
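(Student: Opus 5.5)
The plan is to verify both bounds in \eqref{ell_2-summability} directly from \eqref{eq:JacuBd} in Theorem~\ref{thm:JacBd}, using only the definition of $\bsigma$ and the membership $(a_\bs)_{\bs\in\FF_\kappa}\in\ell_{p/\kappa}(\FF_\kappa)$. I read the exponent in the second bound as $q_\kappa/\kappa$ with $q_\kappa=\frac{2p}{2-p/\kappa}$; the key relation is
\[
\frac{q_\kappa}{\kappa}\Big(1-\frac{p}{2\kappa}\Big)=\frac{p}{\kappa}.
\]
Since $0<p/\kappa<2$, the exponent $p/(2\kappa)-1$ is negative. So $\sigma_\bs$ is large where $a_\bs$ is small, and $\sigma_\bs>1$ whenever $a_\bs\le 1$ and $p_\bs\ge1$. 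For the finitely many $\bs$ with $a_\bs>1$ I would, if needed, rescale $\ba$ by a constant. This also absorbs the constant in \eqref{eq:JacuBd} into $M_\kappa$, up to $C M$.

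\textbf{First bound.} For $\bs\in\FF_\kappa$, \eqref{eq:JacuBd} gives $\|u_\bs\|_X\le CM\,a_\bs/p_\bs(\theta,\lambda)$. Hence
\[
\sigma_\bs\|u_\bs\|_X\le CM\,a_\bs^{p/2\kappa-1}\,p_\bs\,\frac{a_\bs}{p_\bs}=CM\,a_\bs^{p/(2\kappa)}.
\]
Squaring and summing over $\FF_\kappa$ gives
\[
\sum_{\bs\in\FF_\kappa}(\sigma_\bs\|u_\bs\|_X)^2\le (CM)^2\sum_{\bs\in\FF_\kappa}a_\bs^{p/\kappa}=(CM)^2\,\|\ba\|_{\ell_{p/\kappa}(\FF_\kappa)}^{p/\kappa},
\]
which is $(CM)^2M_\kappa$, i.e. the first claim up to the constant $CM$. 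The weight $p_\bs(\theta,\lambda)$ was put into $\sigma_\bs$ precisely so that it cancels here.

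\textbf{Second bound.} By definition, $p_\bs(\theta,\lambda)\sigma_\bs^{-1}=a_\bs^{1-p/(2\kappa)}$. Raising to the power $q_\kappa/\kappa$ and using the key relation gives $a_\bs^{p/\kappa}$. Summing over $\FF_\kappa$ yields
\[
\norm{\bp(\theta,\lambda)\bsigma^{-1}}{\ell_{q_\kappa/\kappa}(\FF_\kappa)}^{q_\kappa/\kappa}=M_\kappa,
\]
which is the second claim with exponent $\kappa/q_\kappa$.

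\textbf{Main obstacle.} The arithmetic above is routine, so the real difficulty is bookkeeping. The exponents in the statement ($q$ versus $q_\kappa$, $q/\kappa$) must be matched so that the relation $\frac{q_\kappa}{\kappa}\big(1-\frac{p}{2\kappa}\big)=\frac{p}{\kappa}$ is the one actually used. The requirement $\sigma_\bs>1$ demanded by Assumption~\ref{assum2} must also be secured. I would handle the latter by normalizing $\ba$ so that $\sup_\bs a_\bs<1$. This is possible because $\ba$ is monotonically decreasing and its maximum $a_{\bsnul}$ is finite, and the normalization only changes constants.
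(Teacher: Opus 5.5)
Your proposal is correct and follows essentially the paper's proof: you insert $\|u_\bs\|_X\le CM\,a_\bs/p_\bs(\theta,\lambda)$ from \eqref{eq:JacuBd} so that $p_\bs$ cancels and $\sigma_\bs\|u_\bs\|_X\le CM\,a_\bs^{p/(2\kappa)}$, and you use $p_\bs\sigma_\bs^{-1}=a_\bs^{1-p/(2\kappa)}$ with $\frac{q_\kappa}{\kappa}\bigl(1-\frac{p}{2\kappa}\bigr)=\frac{p}{\kappa}$ to recover $M_\kappa$. The paper writes this out only for $\kappa=1$ and silently absorbs $CM$ into $\ba$, which is legitimate because $CM\,\ba$ still satisfies Theorem~\ref{thm:JacBd}; you treat general $\kappa$ and track the constant instead, and your extra normalization forcing $\sigma_\bs>1$ is not needed for the corollary as stated (it also pulls in the opposite direction from absorbing $CM$, so it holds only up to constants).
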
	

\begin{proof}
	We prove the corollary for the particular case when $\kappa =1$. 
	The case when $\kappa \ge 2$
	can be proven in an analogous manner with obvious modifications.
	
	Observe $\FF = \FF_1$ and that $M_1 < \infty$ and 
	$$
	\|u_\bs\|_X \le  p_\bnu(\theta,\lambda)^{-1} a_{\bs}
	$$ 
	by \eqref{eq:JacuBd} in Theorem~\ref{thm:JacBd}.		
	Hence, we have by Theorem~\ref{thm:JacBd},
	\begin{equation*}  
		\sum_{\bs\in\FF} (\sigma_{\bs}\|u_\bs\|_X)^2
		\ \le \
		\sum_{\bs\in\FF} 
		\brac{p_\bnu(\theta,\lambda) a_{\bs}^{p/2 - 1}
			p_\bnu(\theta,\lambda)^{-1} a_{\bs}}^2
		\ = \
		\norm{\ba}{\ell_p(\FF)}^p 	
		\ \le \ M_1,
	\end{equation*}	
	and 
	\begin{equation*} 
		\big\|\bp(\theta,\lambda)  \bsigma^{-1}\|_{\ell_q(\FF)}^q
		\ = \ 
		\sum_{\bs\in\FF} 
		\brac{a_{\bs}^{1- p/2}}^{2p/(2-p)} 
		\ = \
		\norm{\ba}{\ell_p(\FF)}^p 
		\ \le \
		M_1,
	\end{equation*}	
	which proves \eqref{ell_2-summability} for $\kappa = 1$.
	\hfill
\end{proof}

The general results, 
Theorem~\ref{thm[quadrature]} and Corollary~\ref{corollary: FullyPolApprEv},
on convergence rates of fully discrete sparse-grid interpolation and quadrature 
imply with the double summability in Assumption~\ref{assum2} (which in the presently
considered case is a consequence of Theorem~\ref{thm:JacBd} and Corollary~\ref{cor:doubSum} with $\kappa=2$)
the following result valid under the parametric holomorphy 
$\frku: Z^i\to X^i$, $i=1,2$.
\begin{theorem}\label{thm:HolSpGQ}
	For complex Hilbert spaces $X^i,Z^i$, $i=1,2$, 
	with $X^2\subset X^1$ and $Z^2\subset Z^1$,
	assume given a map $\frku:Z^i\to X^i$, $i=1,2$ 
	which is in each case holomorphic according to 
	Assumption~\ref{ass:Hol}, item (ii), with suitable exponents $0<p_1\leq p_2 <2$,
	and corresponding sequences $\bb_i$, $i=1,2$ as in Assumption~\ref{ass:Hol}, item (i).

	Assume the parametric maps $\IIi \ni \by \mapsto u(\by)\in X^i$ 
	are given in terms of $\frku$ via affine encoding $\sigma$ in \eqref{eq:u-sigma} 
	with one (common) sequence $(\psi_j)_{j \in \NN} \subset Z^2\subset Z^1$ as in Assumption~\ref{ass:Hol},
	satisfy the double weighted summability 
        Assumption~\ref{assum2}, item (iv)
	with the summability exponents $q_i:= 2p_i/(2-p_i)$, $i=1,2$. 

        Let $\bsigma_i:=(\sigma_{i;\bs})_{\bs \in \FF}$, $i=1,2$,
	be the sets defined as in Corollary~\ref{cor:doubSum} in the context of the space $X^i$.
	
	Assume further given a sequence $(X_m)_{m \in \NN}$ of subspaces $X_m\subset X^1$ 
	such that \eqref{SpatialAppr} in 
        Assumption~\ref{assum2}, item (iii) holds with rate $\alpha > 0$.
	\begin{enumerate}
	\item[{\rm (i)}]
	Then there hold the interpolation error bounds \eqref{vSGxi3}, 
	with the rate $\alpha>0$ which is as in \eqref{SpatialAppr} of 
        Assumption~\ref{assum2}, item (iii)
	and the rates $\beta$ and $\kappa$ are as in \eqref{beta}.
	\end{enumerate}

	\medskip
	Assume in addition that the product Jacobi measure \eqref{eq:ProdJPM} is symmetric, i.e.,
	that $a_i = b_i$ for all $i \in \NN$. 
        For $\xi > 1$, let $\tilde{G}_\rev(\xi)$ be the set defined as in  
	Definition~\ref{def:tG_rev(xi)} for $\bsigma_i$, $i=1,2$, 
	as in Assumption~\ref{assum2}, item (iv). 	
		
	Then for  the quadrature operator 
	$\Qq_{\tilde{G}_\rev(\xi)}$ 
	generated by the interpolation operator $\Ii^*_{\tilde{G}_\rev(\xi)}: C(\IIi, X^2)\to \Vv(\tilde{G}_\rev(\xi))$, 
	we have the following.
		\begin{itemize}
			\item[{\rm (ii)}]
			There exists a constant $C>0$ such that 
			for any $n\in \NN$ there exists a number $\xi_n$ 
                        such that
			$\dim\Vv(\tilde{G}_\rev(\xi_n))\le n$ 
                        and 
			\begin{equation*} \label{u-Q_Gu-quadratureJ2}
				\left\|\int_{\IIi}u(\by)\, \rd \mu(\by) - \Qq_{\tilde{G}_\rev(\xi_n)}u\right\|_{X^1} \leq 	C
				\begin{cases}
					n^{-\alpha} &\text{if } \alpha \leq  2/q_2 - 1/2,
					\\
					n^{-\beta}(\log n)^\kappa &\text{if } \alpha >  2/q_2 - 1/2.
				\end{cases}  
			\end{equation*}			
			\item[{\rm (iii)}] 
			Let $\phi \in (X^1)'$  be a bounded linear functional on $X^1$. 
			There exists a constant $C>0$ such that 
			for any $n\in \NN$ there exists a number $\xi_n$ such that $\dim\Vv(\tilde{G}_\rev(\xi_n))\le n$ and 
			\begin{equation*} \label{u-Q_Gu_phiJ2}
				\left|\int_{\IIi} \langle \phi,  u(\by) \rangle \rd \mu(\by) - \Qq_{\tilde{G}_\rev(\xi_n)} \langle \phi,  u \rangle\right| \leq C \|\phi\|_{(X^1)'}
				\begin{cases}
					n^{-\alpha} &\text{if } \alpha \leq  2/q_2 - 1/2,
					\\
					n^{-\beta}(\log n)^\kappa &\text{if } \alpha >  2/q_2 - 1/2.
				\end{cases} 
			\end{equation*}
		\end{itemize}	
		The rate $\alpha$ is given by~\eqref{SpatialAppr}, $\kappa$ and $\beta > 0$ 
                by \eqref{eq:beta2} with $q_r$ being replaced by $q_2$. 	
	\end{theorem}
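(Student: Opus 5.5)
The plan is to reduce Theorem~\ref{thm:HolSpGQ} entirely to the abstract results of Section~\ref{sec:FlDscInt}, namely Theorem~\ref{thm: FullyPolAppr} for item (i) and Theorem~\ref{thm[quadrature]} (via Corollary~\ref{corollary: FullyPolApprEv}) for items (ii) and (iii). This is the same route as in the proofs of Theorems~\ref{thm[coll-approx]pdeJ} and \ref{thm[quadrature]pdeJ}. The only work is to verify Assumption~\ref{assum2} for $u$, with $X^1,X^2$ and the weights $\bsigma_i$ of Corollary~\ref{cor:doubSum}.

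\textbf{Verifying Assumption~\ref{assum2}.}
\begin{itemize}
\item Item (i) holds by hypothesis.
\item Item (ii) holds as follows. By Proposition~\ref{prop:u(y)}, $u$ is continuous and bounded into $X^2$, hence $u\in\Ll_2(X^2)$ with Jacobi coefficients $u_\bs\in X^2$.
\item Item (iii) is the stated spatial approximation hypothesis.
\item Item (iv) is the key point. Apply Theorem~\ref{thm:JacBd} separately in $X^i$ with $p=p_i$, with $\theta,\lambda$ chosen as in \eqref{theta,lambda}, and with a weight $p_\bs(\theta',\lambda')$ in \eqref{eq:JacuBd} that is at least as large. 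Corollary~\ref{cor:doubSum} with $\kappa=1$ then gives $\sum_\bs(\sigma_{i;\bs}\|u_\bs\|_{X^i})^2<\infty$ and $(p_\bs\sigma_{i;\bs}^{-1})\in\ell_{q_i}$ with $q_i=2p_i/(2-p_i)$.
\end{itemize}

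One must check that this matches the form $(p_\bs^{2/q_i}\sigma_{i;\bs}^{-1})\in\ell_{q_i}$ required in \eqref{weighted summability}. I would handle the power $2/q_i$ by enlarging $\theta$ in $\sigma_{i;\bs}=a_\bs^{p_i/2-1}p_\bs(\theta,\lambda)$, which is allowed since Theorem~\ref{thm:JacBd} holds for arbitrary $\theta,\lambda$. I would also check that $\sigma_{i;\bs}>1$; this follows since $a_\bs\to0$ can be normalized so that $a_\bs\le 1$, and $p_\bs\ge1$. The constraints $q_1\le q_2$ and $q_1<2$ follow from $p_1\le p_2$ and $p_1<1$. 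Here I would note that $q_1<2$ requires $p_1<1$; if only $p_1<2$ is assumed, one uses $\kappa=2$ as indicated before the theorem.

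\textbf{Item (i).} Once Assumption~\ref{assum2} is verified, Theorem~\ref{thm: FullyPolAppr} yields \eqref{vSGxi3} directly.

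\textbf{Items (ii) and (iii).} For the quadrature with $a_j=b_j$, I would first reduce to the even part. By \eqref{eq:JacSym} and $\Delta^{\rm Q}_{\bs'}J_\bs=0$ for $\bs\notin\FF_\rev$, both $\int u\,\rd\mu$ and $\Qq_{\tilde G_\rev(\xi)}u$ depend only on $u_\rev=\sum_{\bs\in\FF_\rev}u_\bs J_\bs$. One then verifies Assumption~\ref{assum2} over $\FF_\rev\subset\FF_2$ with exponents $q_i/2$, using Corollary~\ref{cor:doubSum} with $\kappa=2$. This yields $\ell_{p_i/2}$-summability on $\FF_2$, and hence weights satisfying the summability with $q_i$ halved. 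With $q_1/2,q_2/2$ in place of $q_1,q_2$, the sets $G_\rev(\xi)$ of Definition~\ref{def:G_rev(xi)} become exactly $\tilde G_\rev(\xi)$ of Definition~\ref{def:tG_rev(xi)}, and the rates in \eqref{beta} become those of \eqref{eq:beta2}. Theorem~\ref{thm[quadrature]}(i) then gives (ii). Item (iii) follows from $|\langle\phi,w\rangle|\le\|\phi\|_{(X^1)'}\|w\|_{X^1}$ and linearity, $\Qq\langle\phi,u\rangle=\langle\phi,\Qq u\rangle$.

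\textbf{Main obstacle.} I expect the hardest step to be the exponent bookkeeping in item (iv): converting the output of Corollary~\ref{cor:doubSum}, which is stated with $\bp\bsigma^{-1}$ in $\ell_{q/\kappa}$, into the exact form $p_\bs^{2/q}\sigma_\bs^{-1}$ required by Assumption~\ref{assum2}, for both $\kappa=1$ and $\kappa=2$. I would resolve this by absorbing all powers of $p_\bs$ into the free parameters $\theta,\lambda$ of Theorem~\ref{thm:JacBd}, using $p_\bs(\theta_1,\lambda)p_\bs(\theta_2,\lambda)\le p_\bs(\theta_1+\theta_2,\lambda)$ and monotonicity in $\lambda$, exactly as in the proof of Lemma~\ref{lemma:bcdmJ}.
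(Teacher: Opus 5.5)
Your proposal is correct and follows the paper's route. Item (i) comes from a direct application of Theorem~\ref{thm: FullyPolAppr}. Items (ii)--(iii) come from passing to $\FF_\rev\subset\FF_2$, using the $\kappa=2$ summability from Theorem~\ref{thm:JacBd} and Corollary~\ref{cor:doubSum}, and applying Theorem~\ref{thm[quadrature]} with $q_i/2$ in place of $q_i$, exactly as in the proof of Theorem~\ref{thm[quadrature]pdeJ}. Your exponent bookkeeping and the explicit reduction to $u_\rev$ supply details the paper leaves implicit.

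The only inaccuracy is your side remark that $\kappa=2$ covers the case $1\le p_1<2$. It cannot help item (i), which needs $q_1<2$ on all of $\FF$. For the quadrature it would require the exponents $2p_i/(4-p_i)$ rather than $q_i/2$. This case is excluded anyway, since Assumption~\ref{assum2}(iv) with $q_1<2$ is a hypothesis of the theorem.
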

\begin{proof}
Item (i) is directly obtained by applying Theorem~\ref{thm: FullyPolAppr}. 
Items (ii) and (iii) can be proven in a manner similar to the proof of Theorem~\ref{thm[quadrature]pdeJ}.
\hfill
\end{proof}

\begin{remark}
	{\rm
	We compare the convergence rates of fully discrete sparse-grid polynomial interpolations 
and quadratures in Theorems~\ref{thm[coll-approx]pdeJ}--\ref{thm[quadrature]pdeJ} and in Theorem~\ref{thm:HolSpGQ}. 
Denote by {$A_n$ and $B_n$} the bounds (without constants)  
for these convergence rates as in the right-hand sides of 
\eqref{vSGxi3Rev2} and \eqref{u-Q_Gu-quadratureJ}, respectively. 
Evidently, ignoring values of constants,
$A_n$ and $B_n$ cannot exceed $n^{-\alpha}$ 
which is governed by the spatial regularity $\alpha$ 
in Assumption~\ref{assum2}, item~(iii).
By simple computation we derive that
		\begin{equation*} 
			\begin{cases}
				B_n = A_n = n^{-\alpha}  \  &\text{if }  \ \alpha \leq  1/q_2 - 1/2,
				\\
				B_n = A_n n^{-\tau_1} (\log n)^{-\kappa}  \  &\text{if }  \ 1/q_2 - 1/2 < \alpha \leq  2/q_2 - 1/2,
				\\
				B_n = A_n n^{-\tau_2}    \ &\text{if } \ \alpha >  2/q_2 - 1/2,
			\end{cases}
		\end{equation*}	
for	
$$
\tau_1:= \frac{\alpha(\alpha -1/q_2 +1/2)}{\alpha +\delta}>0,\qquad  
\tau_2:= \frac{\alpha(\alpha/q_1 +\delta /2)}{(\alpha +\delta)(\alpha +2 \delta)}>0,
$$
 where   $\delta:= 1/q_1 - 1/q_2 \ge 0$. 
 This shows a dichotomy between the asymptotic convergence rates, i.e.,  the behaviors of 
 $A_n$ and $B_n$,
 which depends on the relation of spatial regularity $\alpha$ and weighted summability exponent $p_2$.
More precisely,
$A_n$ dominates $B_n$ 
in the case of higher spatial regularity when $\alpha >  1/q_2 - 1/2$.
In the complementary case, 
i.e., in the case of lower spatial regularity when $\alpha  \le1/q_2 - 1/2$,
both asymptotic rates coincide and equal $n^{-\alpha}$.
As noted earlier, this principal improvement in the first case 
stems from the cancellation of anti-symmetric terms within the sparse-grid tensor-product quadratures associated with the symmetric  
infinite-tensor-product ultra-spherical polynomials.
}
\end{remark}
%
%

\end{document}